\theoremstyle{definition}
\newtheorem{theo}{Theorem}[section]
\newtheorem{exa}{Example}
\newtheorem{lem}[theo]{Lemma}
\newtheorem{cor}[theo]{Corollary}
\title{On embeddings between outer automorphism groups of right-angled Artin groups}
\author{Shiro Imamura}
\date{}
\begin{document}
%タイトル
\maketitle
%アブストラクト
\begin{abstract}
Using abelian coverings of Salvetti complexes, embeddings of outer automorphism groups of right-angled Artin groups (RAAGs) into outer automorphism groups of their particular characteristic subgroups are constructed. Virtual embeddings of outer automorphism groups of finitely generated groups having the unique root property into outer automorphism groups of their particular subgroups are also given. These results provide us with rich examples of (virtual) embeddings between outer automorphism groups of RAAGs. Finite subgroups of pure (outer) automorphism groups of RAAGs are also investigated. $p$-adic valuations and $\mathbb{Z}_p$-ranks of these groups are determined to establish some non-embeddability conditions.
\end{abstract}
\tableofcontents
\section*{Acknowledgement}
First and foremost the author wishes to express his gratitude to his supervisor, Takuya Sakasai. The author would also like to thank Dawid Kielak for helpful comments, especially on the proof of Lemma \ref{noP3}. The author was supported by the Program for Leading Graduate Schools, MEXT, Japan.
%第一章　導入
\section{Introduction}
\subsection{Background}
Right-angled Artin groups have recently attracted much interest in geometric group theory due to their actions on $\text{CAT}(0)$ cube complexes. They were first introduced by Baudisch \cite{B} and their theory was developed, for example, by Droms \cite{Dr1} \cite{Dr2} \cite{Dr3} under the name ``graph groups''.  It is famous that they have played an important role in the affirmative solution of the virtual Haken conjecture \cite{A}.

The outer automorphism group of a group $G$ is the quotient group $\text{Aut}(G)/\text{Inn}(G)$, where $\text{Aut}(G)$ denotes the automorphism group of $G$ and $\text{Inn}(G)$ denotes the inner automorphism group, the normal subgroup of $\text{Aut}(G)$ consisting of conjugations.
It is obvious that $\text{Aut}(G)$ is a subgroup of $\text{Aut}(G*H)$. However, when it comes to the outer case, it is not at all clear whether or not there exists an embedding of $\text{Out}(G)$ into $\text{Out}(G*H)$. There are some results about embeddings between outer automorphism groups of free groups: Bogopolski and Puga \cite{B&P} showed that $\text{Out}(F_n)$ can be embedded into $\text{Out}(F_{r^n(n-1)+1})$ if $r$ is odd and coprime to $n-1$. Using abelian coverings of graphs, Bridson and Vogtmann \cite{B&V} improved this result by showing the even case:
\begin{theo}\cite[Corollary A]{B&V}\label{embBV}
There exists an embedding $\text{Out}(F_n)\hookrightarrow\text{Out}(F_m)$ for any $m$ of the form $m=r^n(n-1)+1$ with $r>1$ coprime to $n-1$.
\end{theo}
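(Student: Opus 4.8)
The plan is to realize the embedding geometrically through a characteristic abelian covering of the rose, which is the graph-theoretic incarnation of the abelian-covering method. Let $R_n$ be the wedge of $n$ circles, so that $\pi_1(R_n)\cong F_n$, and let $q\colon F_n\twoheadrightarrow(\mathbb{Z}/r)^n$ be abelianization followed by reduction modulo $r$. Its kernel $K$ is a characteristic subgroup of index $r^n$, and the associated covering $p\colon X\to R_n$ is a finite graph on which the deck group $Q=(\mathbb{Z}/r)^n$ acts freely. First I would pin down the rank of $K\cong\pi_1(X)$: since $\chi(R_n)=1-n$ and the covering has $r^n$ sheets, $\chi(X)=r^n(1-n)$, and as $X$ is a graph its fundamental group is free of rank $1-\chi(X)=r^n(n-1)+1=m$. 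This produces the target $F_m$ and accounts for the arithmetic shape of $m$; observe that this step does not yet use the coprimality hypothesis, only the order of $Q$.

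Next I would build the candidate homomorphism. Because $K$ is characteristic, every $\phi\in\text{Aut}(F_n)$ preserves $K$, and restriction gives $\phi\mapsto\phi|_K\in\text{Aut}(K)=\text{Aut}(F_m)$; topologically this is the lift to $X$ of a based homotopy equivalence of $R_n$ representing $\phi$. Composing with the projection to $\text{Out}(F_m)$ yields a homomorphism $\rho\colon\text{Aut}(F_n)\to\text{Out}(F_m)$. To control $\ker\rho$ I would invoke the unique-root property of free groups: if $\phi|_K$ agrees with conjugation $c_k|_K$ for some $k\in K$, then $c_k^{-1}\circ\phi$ fixes $K$ pointwise, hence fixes every $a_i^{\,r}\in K$, and taking $r$-th roots forces $c_k^{-1}\circ\phi=\mathrm{id}$. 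Thus $\ker\rho=\{c_k : k\in K\}\cong K$ consists exactly of the conjugations by elements of $K$, and consequently $\rho(\text{Inn}(F_n))\cong\text{Inn}(F_n)/\ker\rho\cong F_n/K=Q$ is a \emph{finite} subgroup $\bar Q$ of $\text{Out}(F_m)$, generated by the outer classes of the deck transformations.

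The crux — and the step that must consume the hypothesis — is that $\rho$ does \emph{not} kill all of $\text{Inn}(F_n)$: a conjugation $c_g$ restricts to an inner automorphism of $K$ only when $g\in K$, so $\rho$ descends merely to an isomorphism $\text{Out}(F_n)\cong\rho(\text{Aut}(F_n))/\bar Q$, exhibiting $\text{Out}(F_n)$ as a subquotient rather than a subgroup of $\text{Out}(F_m)$. To upgrade this to an honest embedding I would construct a homomorphic section of the extension
\[
1\longrightarrow\bar Q\longrightarrow\rho(\text{Aut}(F_n))\longrightarrow\text{Out}(F_n)\longrightarrow 1,
\]
equivalently a consistent, choice-free normalization of the lifts $\tilde f\colon X\to X$ that is insensitive to the deck ambiguity $Q$. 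This is exactly where I expect the main difficulty to lie: the finite group $\bar Q$ sits in $\text{Out}(F_m)$ as the symmetries of a free $Q$-action on $X$, and one must rigidify these symmetries against the $\text{GL}_n(\mathbb{Z})$-action on $Q=H_1(F_n;\mathbb{Z}/r)$. I anticipate that $r>1$ coprime to $n-1$ is precisely the condition trivializing the orientation/sign obstruction that in the Bogopolski–Puga argument forced $r$ to be odd, so that a section exists. Once the section is in hand, injectivity is automatic (sections are injective), and an element of its kernel would lift to an automorphism of $F_n$ restricting to an inner automorphism of $K$, which by the kernel computation above is inner in $F_n$, hence trivial in $\text{Out}(F_n)$; so the delicate point is not any single computation but the basepoint-and-deck bookkeeping that produces the section.
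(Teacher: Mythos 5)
Your setup is correct as far as it goes, and it is the same reduction the paper itself uses (this statement is cited from Bridson--Vogtmann, but the paper proves a generalization, Theorem \ref{Embedding} plus Corollary \ref{FF}, which specializes to it on null graphs): the Euler characteristic computation of $m$, the injectivity of restriction $\text{Aut}(F_n)\to\text{Aut}(K)$ via the unique root property (Lemma \ref{Aut embedding}), the computation $\ker\rho=\{c_k : k\in K\}$, and the reduction to splitting the extension $1\to Q\to \text{Aut}(F_n)/K\to\text{Out}(F_n)\to 1$ are all sound. But the proof stops exactly where the theorem begins. You never construct the section; you only ``anticipate'' that coprimality of $r$ and $n-1$ kills an unspecified orientation/sign obstruction. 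Note that everything you actually establish uses no hypothesis on $r$ whatsoever and yields only that $\text{Out}(F_n)$ is a \emph{subquotient} of $\text{Out}(F_m)$; a hypothesis-free argument cannot prove a theorem whose content lives entirely in that hypothesis, so the missing step is not bookkeeping --- it is the theorem. Your diagnosis of the obstruction is also off: the coprimality condition is not a sign/orientation issue (the oddness in Bogopolski--Puga was an artifact of their method that Bridson--Vogtmann removed); it is an arithmetic solvability condition on shift lengths, as follows.

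The actual construction (Subsection 4.2 of the paper, specialized to the rose) is this. Identify the universal abelian cover of the rose with the grid $U\subset\mathbb{R}^n$, so that $X\cong U/\text{Deck}(U,X)$ with $\text{Deck}(U,X)=r\mathbb{Z}^n$. Lift each generator of $\text{Aut}(F_n)$ to the standard lift of $U$ fixing the origin, and then \emph{shift} the lifts of the conjugation-type generators: the lift of each partial conjugation $\gamma_{v,\{w\}}$ (and of each inversion $\iota_v$) is translated by $s$ in the $x_v$-direction, where $s$ is chosen so that $s(n-1)\equiv -1\pmod{r}$ --- a congruence solvable precisely because $r$ is coprime to $n-1$. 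One then verifies that these affine maps satisfy every relation in Day's presentation of $\text{Aut}(F_n)$ (Lemma \ref{Day}), so the assignment is a homomorphism, and that the lift of each inner automorphism $\gamma_v$ is homotopic to the translation by $s(n-1)+1\equiv 0\pmod{r}$ in the $x_v$-direction, i.e.\ to an element of $\text{Deck}(U,X)$. By Lemma \ref{Vogtmann} this means inner automorphisms die, so the assignment descends to a homomorphic section $\text{Out}(F_n)\to\text{Aut}(F_n)/K$, whose composition with $\text{Aut}(F_n)/K\hookrightarrow\text{Aut}(F_m)/\text{Inn}(F_m)=\text{Out}(F_m)$ is the desired embedding. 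That single congruence is the one and only place the hypothesis enters; without carrying out this construction and the relation checks, your argument is a correct framework but not a proof.
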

They have also proven a virtual embedding of $\text{Out}(F_n)$ into $\text{Out}(F_{d(n-1)+1})$, i.e. the existence of a finite-index subgroup of $\text{Out}(F_n)$ which embeds into $\text{Out}(F_{d(n-1)+1})$: 
\begin{theo}\cite[Proposition 2.5]{B&V}\label{virBV}
For all positive integers $n$ and $d$, there exists a subgroup of finite index $\Gamma\subset\text{Out}(F_n)$ and a monomorphism $\Gamma\hookrightarrow\text{Out}(F_m)$, where $m=d(n-1)+1$.
\end{theo}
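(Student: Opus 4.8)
The plan is to realize the map geometrically as the restriction homomorphism attached to a regular abelian covering of the rose $R_n$ (a wedge of $n$ circles, with $\pi_1(R_n)=F_n$), and then to repair the failure of this map to descend to outer groups by passing to a finite-index subgroup. First I would fix a surjection $\rho\colon F_n\twoheadrightarrow\mathbb{Z}/d$ sending one generator to $1$ and the rest to $0$, and set $H=\ker\rho$, a normal subgroup of index $d$. By the Nielsen--Schreier formula (equivalently, by computing the Euler characteristic of the associated $d$-fold cover of $R_n$), $H$ is free of rank $m=d(n-1)+1$, so $\text{Aut}(H)\cong\text{Aut}(F_m)$. Since $F_n$ has only finitely many subgroups of index $d$, the stabilizer $\text{Aut}(F_n,H)=\{\varphi\in\text{Aut}(F_n):\varphi(H)=H\}$ has finite index in $\text{Aut}(F_n)$, and restriction defines a homomorphism $R\colon\text{Aut}(F_n,H)\to\text{Aut}(F_m)$. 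Composing with the projection $\text{Aut}(F_m)\to\text{Out}(F_m)$ gives $\bar R\colon\text{Aut}(F_n,H)\to\text{Out}(F_m)$.

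The key computation is to identify $\ker\bar R$. Assuming $n\ge2$ (the case $n=1$ forces $m=1$ and is trivial), $H$ is a non-cyclic free group, so its centralizer in $F_n$ is trivial. From this I would deduce two facts: any automorphism of $F_n$ restricting to the identity on $H$ is itself the identity; and for $g\in F_n$ the restriction $c_g|_H$ of the inner automorphism $c_g$ is inner in $H$ if and only if $g\in H$. Together these show $\ker\bar R=\{c_h:h\in H\}$, a subgroup $N\cong H$ sitting inside $\text{Inn}(F_n)$. Hence $\bar R$ descends to an injection $\tilde\Gamma:=\text{Aut}(F_n,H)/N\hookrightarrow\text{Out}(F_m)$.

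Writing $\Gamma:=\text{Aut}(F_n,H)/\text{Inn}(F_n)$, which is a finite-index subgroup of $\text{Out}(F_n)$, this exhibits $\tilde\Gamma$ as an extension $1\to Q\to\tilde\Gamma\to\Gamma\to1$ by the finite group $Q=\text{Inn}(F_n)/N\cong F_n/H\cong\mathbb{Z}/d$. The main obstacle is precisely this finite group $Q$: because $\text{Inn}(F_n)$ does not map into $\text{Inn}(F_m)$ under restriction, $\bar R$ embeds the extension $\tilde\Gamma$ rather than $\Gamma$ itself, so the construction does not literally embed a subgroup of $\text{Out}(F_n)$. This is exactly the point that forces the statement to be only a virtual embedding.

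To remove $Q$ I would invoke the classical fact that $\text{Out}(F_m)$ is virtually torsion-free: choosing a finite-index torsion-free subgroup $\Lambda\le\text{Out}(F_m)$ and setting $S:=\tilde\Gamma\cap\Lambda$, the subgroup $S$ has finite index in $\tilde\Gamma$ and is torsion-free, hence meets the finite subgroup $Q$ trivially. The projection $\tilde\Gamma\to\Gamma$ therefore restricts to an isomorphism from $S$ onto a finite-index subgroup $\Gamma'\le\Gamma\le\text{Out}(F_n)$, while at the same time $S\hookrightarrow\tilde\Gamma\hookrightarrow\text{Out}(F_m)$. Composing the isomorphism $\Gamma'\cong S$ with this embedding yields a monomorphism from the finite-index subgroup $\Gamma'$ of $\text{Out}(F_n)$ into $\text{Out}(F_m)$, with $m=d(n-1)+1$, as required.
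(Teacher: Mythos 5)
Your proof is correct, but it reaches the conclusion by a genuinely different route from the paper's. The paper obtains this statement as the null-graph case of Corollary \ref{Virtualout}, which rests on Theorem \ref{Virtual embedding}: there one passes to a smaller normal subgroup $K\subset L=H$, namely the kernel of a surjection $F_n\twoheadrightarrow\mathbb{Z}_p\rtimes\mathbb{Z}_d$ onto a \emph{centerless} finite group (Dirichlet's theorem supplies a prime $p\equiv1\pmod{d}$ making the semidirect product centerless), and embeds the finite-index subgroup $B/(B\cap\text{Inn}(F_n))$ of $\text{Out}(F_n)$, where $B$ consists of automorphisms preserving $K$ and acting trivially on $F_n/K$. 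Centerlessness forces $B\cap\text{Inn}(F_n)$ to consist of conjugations by elements of $K\subset L$, and these restrict to inner automorphisms of $L$; hence the restriction map descends at once to an honest embedding into $\text{Out}(L)$, with no torsion considerations in the target. You instead work with the full stabilizer $\text{Aut}(F_n,H)$, correctly identify $\ker\bigl(\text{Aut}(F_n,H)\to\text{Out}(H)\bigr)=\{c_h:h\in H\}$ via the trivial-centralizer argument (this is where you need $H$ normal and non-cyclic, i.e.\ $n\geq2$), and so at first embed only the extension $\tilde\Gamma$ of $\Gamma$ by $Q\cong\mathbb{Z}_d$; you then remove $Q$ by intersecting with a torsion-free finite-index subgroup of $\text{Out}(F_m)$. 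Both mechanisms are sound. The paper's approach buys self-containedness and generality: it needs only the unique root property of the source group, so it extends verbatim to all RAAGs (which is the point of Theorem \ref{Virtual embedding}), and the embedded subgroup is an explicit congruence-type subgroup. Yours buys economy of construction --- no Dirichlet, no centerless quotient --- at the price of a non-explicit finite-index subgroup and of the classical but nontrivial input that $\text{Out}(F_m)$ is virtually torsion-free (Baumslag--Taylor \cite{B&T} combined with a torsion-free congruence subgroup of $\text{GL}(m,\mathbb{Z})$); this input is available for free-group targets, but to run your argument with RAAG targets one would need the analogous virtual torsion-freeness there (e.g.\ via Toinet \cite{T}), so in the paper's broader setting it is the less portable approach.
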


It is known that any finite subgroups of $\text{Aut}(F_n)$ or $\text{Out}(F_n)$ can be realized as a group of automorphisms of a (not necessarily simple) graph with fundamental group $F_n$ (for example \cite{C}). This fact allows us to classify finite subgroups of $\text{Aut}(F_n)$ or $\text{Out}(F_n)$, where $k_i\geq2$ and $m_i\geq1$. For example, finite subgroups of $\text{Aut}(F_n)$ and $\text{Out}(F_n)$ are classified in the following way by Khramtsov \cite{Kh} and Mazurov \cite{Ma}, respectively: 
\begin{theo}\cite[Theorem 2, Theorem 3]{Kh}\label{Kh}
A finite group $G$ can be embedded into $\text{Aut}(F_n)$ if and only if $G$ is a subgroup of $\prod_{i\in I}(S_{k_i}\wr S_{m_i})$ such that $\sum_{i\in I}(k_i-1)m_i\leq n$, where $k_i\geq2$ and $m_i\geq1$ are integers. When $G$ is an abelian group $\prod_{i\in I}\mathbb{Z}_{{p_i}^{e_i}}$, $G$ can be embedded into $\text{Aut}(F_n)$ if and only if $\sum_{i\in I}(p_i^{e_i}-p_i^{e_i-1})\leq n$.
\end{theo}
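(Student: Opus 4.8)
The plan is to reduce everything to the combinatorics of finite group actions on graphs, using the realization result quoted just above: a finite group $G$ embeds in $\text{Aut}(F_n)$ precisely when it acts on a finite connected graph $X$ with $\pi_1(X)\cong F_n$, fixing a vertex $x_0$, so that the induced action on $\pi_1(X,x_0)$ is faithful. (For the $\text{Out}$-version one simply drops the fixed vertex.) Since $\text{Aut}(F_m)\hookrightarrow\text{Aut}(F_n)$ whenever $m\le n$, via a free-factor splitting $F_n=F_m*F_{n-m}$, the content of the theorem is the identification of the minimal first Betti number $b_1(X)$ over realizing graphs with $\min\sum_i(k_i-1)m_i$ taken over wreath-product overgroups $G\le\prod_i(S_{k_i}\wr S_{m_i})$.

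For the sufficiency (``if'') direction I would build the realizing graph explicitly. The basic block realizing $S_k\wr S_m$ is a ``bouquet of dipoles'': a central fixed vertex $c$ joined to vertices $N_1,\dots,N_m$ by single edges, with each $N_j$ joined to a further vertex $S_j$ by $k$ parallel edges. Here the factor $S_k$ in block $j$ permutes the $k$ parallel edges between $N_j$ and $S_j$, while the top group $S_m$ permutes the $m$ blocks; an Euler-characteristic count gives $b_1=m(k-1)$, and $c$ is fixed, so $S_k\wr S_m\hookrightarrow\text{Aut}(F_{(k-1)m})$. Wedging one such block for each index $i$ at the common basepoint $c$ makes the factors act independently and additively, giving $\prod_i(S_{k_i}\wr S_{m_i})\hookrightarrow\text{Aut}(F_{\sum_i(k_i-1)m_i})$. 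Restricting to the subgroup $G$ and enlarging the free group then yields $G\hookrightarrow\text{Aut}(F_n)$ whenever $\sum_i(k_i-1)m_i\le n$.

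For the necessity (``only if'') direction I would start from a realizing action on a reduced graph $X$ (all vertices of valence $\ge 3$ apart from $x_0$) and extract the wreath structure from its edges. Grouping edges into maximal bundles of parallel edges (same unordered endpoint pair), the group $G$ permutes the bundles and preserves this partition; an edge-orbit then consists of $m_i$ bundles each of size $k_i$, so the induced permutation action on that orbit embeds in $S_{k_i}\wr S_{m_i}$, giving $G\hookrightarrow\prod_i(S_{k_i}\wr S_{m_i})$. The rank bound is the pleasant part: if $t$ is the total number of bundles then $\sum_i(k_i-1)m_i=|E|-t$, while the ``bundle graph'' obtained by collapsing each bundle to a single edge is connected, so $t\ge|V|-1$ and hence $\sum_i(k_i-1)m_i\le|E|-|V|+1=b_1(X)\le n$.

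The main obstacle is making this extraction honest, and it is exactly where the hypotheses must be used. Faithfulness is assumed on $\pi_1(X)$, not on the edge set, and the two genuinely differ: a rotation of a polygon acts faithfully on the graph but trivially on $\pi_1$, and such $\pi_1$-invisible symmetries would corrupt both the embedding into $\prod_i(S_{k_i}\wr S_{m_i})$ and the appearance of degenerate single-edge bundles ($k_i=1$), which the statement forbids. I would therefore have to show that $\pi_1$-faithfulness forces every nontrivial element of $G$ to act nontrivially on some bundle of size $k_i\ge 2$ — equivalently, that free single-edge orbits record nothing not already seen on the genuine ($k_i\ge 2$) bundles — handling edge-flips by passing to oriented edges or a single barycentric subdivision. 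Finally, the abelian refinement follows by specialization: the embeddings $\mathbb{Z}_{p_i^{e_i}}\hookrightarrow S_{p_i}\wr S_{p_i^{e_i-1}}$ of cost $p_i^{e_i}-p_i^{e_i-1}$ (generator the element $(v;c)$ of the base wreath product with $c$ a full cycle and $v$ of trace $1$) give the ``if'' direction, and for the ``only if'' one checks, prime by prime, that any wreath-product overgroup of $\prod_i\mathbb{Z}_{p_i^{e_i}}$ has cost at least $\sum_i(p_i^{e_i}-p_i^{e_i-1})$, a finite computation whose core is that an element of order $p^e$ needs an orbit structure of total cost at least $p^e-p^{e-1}$.
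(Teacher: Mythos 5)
First, a point of comparison: the paper does not prove Theorem \ref{Kh} at all --- it is quoted as background from Khramtsov \cite{Kh} --- so your proposal can only be measured against the content of the statement itself, not against an internal argument. With that said, your ``if'' direction is correct and is the standard construction: the bouquet of dipoles realizes $S_k\wr S_m$ with a fixed vertex, the action on $H_1$ of each dipole is the standard representation (faithful since $k\ge 2$) permuted around by the top group, wedging at $c$ realizes the product, and padding with free petals at $c$ gives the general bound. Your Euler-characteristic count $\sum_i(k_i-1)m_i=|E|-t\le|E|-|V|+1=b_1(X)$ is also sound, modulo the small repair that you should work with \emph{orbits of bundles} rather than edge-orbits (an edge-orbit can meet a bundle in a proper subset).

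The ``only if'' direction, however, has a genuine gap, and it sits exactly at the step you flagged and postponed. The lemma you propose to prove --- that faithfulness on $\pi_1$ forces every nontrivial element of $G$ to move some bundle of size $k\ge2$, i.e.\ that singleton bundles ``record nothing new'' --- is false. Take $X=K_4$ with fixed vertex $x_0$ and $G=S_3$ permuting the other three vertices $a,b,c$: this graph is reduced (every vertex has valence $3$) and simple, so \emph{every} bundle is a single edge, and this persists after passing to oriented edges (all ordered endpoint pairs are distinct); yet $G$ acts faithfully on $\pi_1(K_4,x_0)\cong F_3$, as one sees already on $H_1$ with basis the three loops through $ab$, $bc$, $ca$ (a $3$-cycle permutes them cyclically, a transposition acts by a signed permutation). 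So your extraction places all of $G$ in factors with $k_i=1$: discarding those factors, as the requirement $k_i\ge2$ forces you to, kills the whole group, while keeping them produces a ``certificate'' of cost $0$, which is absurd since the minimal cost of $S_3$ is $2$. Barycentric subdivision only creates more singleton bundles, and wholesale orientation-doubling destroys the inequality $|E|-t\le b_1$ (already for a rose whose petals are permuted). Note where the correct certificate actually lives in this example: $S_3=S_3\wr S_1$ with cost $2\le 3$ is read off from the action of $G$ on the \emph{star} of $x_0$, not from any bundle. This indicates that for simple graphs the wreath-product structure and its cost must be manufactured from how edge orbits attach to vertex orbits (vertex stabilizers acting on stars), a genuinely different mechanism from the one you propose; the deferred step is thus the heart of the theorem, not a verification. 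A smaller gap of the same kind: in the abelian refinement, the ``only if'' needs a lower bound on the cost of $\prod_i\mathbb{Z}_{p_i^{e_i}}$ over \emph{all} embeddings into products of wreath products, which is a real argument about minimal faithful permutation representations rather than a finite check.
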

\begin{theo}\cite[Theorem]{Ma}\label{Ma}
A finite group $G$ can be embedded into $\text{Out}(F_n)$ if and only if one of the following three cases holds:
\begin{itemize}
\item $G$ is a subgroup of $\text{Aut}(F_n)$;
\item $G$ is a subgroup of $S_{n+1}\times\mathbb{Z}_2$;
\item $n=10$ and $G$ is a subgroup of the semi-direct product of a non-abelian group of period $3$ and order $27$ and the group of its automorphisms. 
\end{itemize}
\end{theo}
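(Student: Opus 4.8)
The plan is to reduce the classification of finite subgroups of $\text{Out}(F_n)$ to a combinatorial study of finite group actions on graphs, exactly as the discussion preceding the statement suggests. The starting point is the realization principle (see \cite{C}): every finite subgroup $G\leq\text{Out}(F_n)$ arises as the group induced on $\pi_1$ by an action of $G$ on a finite connected graph $\Gamma$ with $\pi_1(\Gamma)\cong F_n$, and one may take this action to be without inverted edges (after a barycentric subdivision) and the graph to be reduced (no valence-one, and no superfluous valence-two, vertices). The whole argument then turns on a single dichotomy: whether or not $G$ fixes a point of $\Gamma$. If $G$ fixes a vertex $v$, I would use $v$ as a basepoint, identify $\pi_1(\Gamma,v)$ with $F_n$, and observe that the graph automorphisms induce honest automorphisms of $F_n$ fixing the basepoint, so $G$ lifts to a subgroup of $\text{Aut}(F_n)$; this is the first case. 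Conversely, since the kernel $\text{Inn}(F_n)\cong F_n$ of $\text{Aut}(F_n)\twoheadrightarrow\text{Out}(F_n)$ is torsion-free, every finite subgroup of $\text{Aut}(F_n)$ injects into $\text{Out}(F_n)$, so the first alternative is genuinely realized.

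The interesting part is the case of an action with no global fixed point, which I would control by the orbit-counting identity
\[
n-1=-\chi(\Gamma)=|G|\left(\sum_{\text{edge orbits}}\frac{1}{|G_e|}-\sum_{\text{vertex orbits}}\frac{1}{|G_v|}\right),
\]
together with valence bounds coming from faithfulness of the induced action on homology. Analysing this fixed-point-free regime, I expect two families to emerge. When the quotient is smallest, $\Gamma$ is (a subdivision of) the graph $R$ with two vertices joined by $n+1$ edges; its full automorphism group is $S_{n+1}\times\mathbb{Z}_2$, with the first factor permuting the edges and the $\mathbb{Z}_2$ swapping the two endpoints. This yields the second case, and by Theorem \ref{Kh} the group $S_{n+1}\times\mathbb{Z}_2$ fails the bound $\sum_i(k_i-1)m_i\leq n$, so it is not contained in $\text{Aut}(F_n)$ in general, confirming that the second alternative is a genuinely new outer phenomenon. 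For finitely many small ranks, graphs carrying higher symmetry can occur, and the delicate point is to show that the Euler-characteristic constraints above admit a single exceptional configuration, of rank $10$, whose full automorphism group is the holomorph of the extraspecial group of order $27$ and exponent $3$; this is the third case.

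The main obstacle is precisely this finite but intricate case analysis of fixed-point-free actions: one must enumerate the admissible quotient graphs and the vertex- and edge-stabilizer data compatible with the identity, eliminate every possibility except the dipole family, and verify that the unique sporadic graph occurs exactly at $n=10$. Throughout, one has to track faithfulness of the induced outer action, so that no invariant forest is available to collapse; this is what forces the graphs to be reduced and accounts for the rigidity of the answer. The converse direction (sufficiency) is then completed by exhibiting the graph actions described above and checking directly that each realizes the claimed group inside $\text{Out}(F_n)$.
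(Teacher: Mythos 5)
This statement is one the paper does not prove at all: it is quoted verbatim from Mazurov \cite[Theorem]{Ma} as background, so there is no internal proof to compare your attempt against, and your proposal must be judged on whether it would stand as a proof on its own. The frame you set up is the standard one and the parts you actually argue are correct: Culler's realization theorem \cite{C} lets you replace a finite subgroup of $\text{Out}(F_n)$ by an action on a finite reduced graph of rank $n$; an action fixing a vertex lifts the group to $\text{Aut}(F_n)$; torsion-freeness of $\text{Inn}(F_n)$ shows every finite subgroup of $\text{Aut}(F_n)$ survives in $\text{Out}(F_n)$; and the orbit-counting form of the Euler characteristic identity is stated correctly.

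The genuine gap is that the entire mathematical content of the theorem lies in the fixed-point-free case, and there you do not give an argument -- you give a description of the argument you would need. The phrases ``I expect two families to emerge,'' ``the delicate point is to show that the Euler-characteristic constraints above admit a single exceptional configuration,'' and ``the main obstacle is precisely this finite but intricate case analysis'' defer exactly the step that distinguishes the actual answer ($S_{n+1}\times\mathbb{Z}_2$ plus a single rank-$10$ exception) from any other conceivable one. Nothing in the orbit-counting identity by itself singles out the dipole graph, bounds the admissible stabilizer data, or explains why the sporadic configuration occurs at $n=10$ and at no other rank; establishing this is the bulk of Mazurov's paper and cannot be waved at. There is also a logical slip in your framing: it does not suffice to identify the graph with ``maximal symmetry'' in each rank. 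You must show that \emph{every} finite group acting without a global fixed point on \emph{some} admissible graph of rank $n$ embeds into $S_{n+1}\times\mathbb{Z}_2$ (or, for $n=10$, into the holomorph of the extraspecial group of order $27$) -- a statement quantified over all graphs and all actions, not a computation of the automorphism group of one extremal graph per rank. As written, the proposal is a correct plan of attack, not a proof.
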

As for finite subgroups of general linear groups, the following result is classical:
\begin{theo}\cite{Minkowski}\label{Minkowski}
The least common multiple $M(n)$ of the orders of all finite groups of $n\times n$-matrices over $\mathbb{Q}$ is given by:
\[
M(n)=\prod_{p}p^{\sum_{k\geq0}\left\lfloor\frac{n}{p^k(p-1)}\right\rfloor}.
\]
\end{theo}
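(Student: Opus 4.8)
The plan is to treat each prime $p$ separately, since $M(n)=\prod_p p^{M_p}$ with $M_p=\max\{v_p(|G|): G\le \mathrm{GL}_n(\mathbb{Q})\text{ finite}\}$, and to establish for each $p$ both an upper bound $v_p(|G|)\le E_p$ and a realization attaining it, where $E_p=\sum_{k\ge 0}\bigl\lfloor n/(p^k(p-1))\bigr\rfloor$ is the claimed exponent. The Minkowski product then has $p$-valuation exactly $E_p$ for every $p$.

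\textbf{Upper bound ($p$ odd).} First I would note that any finite $G\le \mathrm{GL}_n(\mathbb{Q})$ stabilizes the lattice $\sum_{g\in G}g\mathbb{Z}^n$ and is therefore conjugate into $\mathrm{GL}_n(\mathbb{Z})$. Then I invoke Minkowski's lemma: for an odd prime $\ell$ the kernel of reduction $\mathrm{GL}_n(\mathbb{Z})\to\mathrm{GL}_n(\mathbb{F}_\ell)$ is torsion-free, so the finite group $G$ embeds into $\mathrm{GL}_n(\mathbb{F}_\ell)$ and $v_p(|G|)\le v_p(|\mathrm{GL}_n(\mathbb{F}_\ell)|)$. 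It remains to pick $\ell$ making the right-hand side equal to $E_p$. Using $|\mathrm{GL}_n(\mathbb{F}_\ell)|=\ell^{\binom n2}\prod_{i=1}^n(\ell^i-1)$ and $\ell\ne p$, only the factors $\ell^i-1$ matter. Choosing, via Dirichlet's theorem, an odd prime $\ell\ne p$ that is a primitive root modulo $p^2$, one has $\mathrm{ord}_p(\ell)=p-1$ and $v_p(\ell^{p-1}-1)=1$; lifting the exponent then gives $v_p(\ell^i-1)=1+v_p(j)$ when $i=(p-1)j$ and $0$ otherwise, so
\[
v_p\bigl(|\mathrm{GL}_n(\mathbb{F}_\ell)|\bigr)=\sum_{j=1}^{\lfloor n/(p-1)\rfloor}\bigl(1+v_p(j)\bigr)=\Bigl\lfloor\tfrac{n}{p-1}\Bigr\rfloor+v_p\!\Bigl(\bigl\lfloor\tfrac{n}{p-1}\bigr\rfloor!\Bigr).
\]
Legendre's formula together with the floor identity $\lfloor\lfloor x/a\rfloor/b\rfloor=\lfloor x/(ab)\rfloor$ converts this into $\sum_{k\ge 0}\lfloor n/(p^k(p-1))\rfloor=E_p$, as wanted.

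\textbf{Realization (all $p$).} To see the bound is attained I would embed $\mathbb{Z}/p\hookrightarrow\mathrm{GL}_{p-1}(\mathbb{Q})$ by the action of a primitive $p$th root of unity on $\mathbb{Q}(\zeta_p)\cong\mathbb{Q}^{p-1}$, and then form $G_p=(\mathbb{Z}/p)\wr \mathrm{Syl}_p(S_m)$ with $m=\lfloor n/(p-1)\rfloor$, acting on $m$ permuted copies of $\mathbb{Q}^{p-1}$. This representation is faithful of dimension $(p-1)m\le n$ (pad with the identity block to reach $n$), and $v_p(|G_p|)=m+v_p(m!)=E_p$ by the computation above; for $p=2$ it is simply the hyperoctahedral group $\mathbb{Z}/2\wr S_n$ of signed permutation matrices, of order $2^n n!$. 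Thus the realization half is uniform across primes and presents no real difficulty.

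\textbf{Main obstacle.} The genuine difficulty is the upper bound at $p=2$: since every odd $\ell$ satisfies $\mathrm{ord}_2(\ell)=1$, each factor $\ell^i-1$ is even, so the reduction estimate strictly overshoots — already for $n=2$ it yields $v_2(|\mathrm{GL}_2(\mathbb{F}_\ell)|)=4$ (minimised over odd $\ell$) rather than $M_2(2)=3$ — and no reduction modulo a single prime can produce the sharp exponent. For $p=2$ I would instead argue $2$-adically and representation-theoretically, decomposing $\mathbb{Q}^n$ into irreducible $\mathbb{Q}[P]$-modules for a Sylow $2$-subgroup $P$ and bounding the exponent contributed by each faithful constituent, as in Schur's refinement of Minkowski's argument; the delicate step is the bookkeeping that recombines these per-constituent bounds into exactly $\sum_{k\ge 0}\lfloor n/2^k\rfloor$.
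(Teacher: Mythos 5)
This theorem is quoted in the paper from Minkowski's 1887 article and the paper itself contains no proof of it, so there is nothing to compare your attempt against; it must be judged on its own terms. The odd-$p$ half of your argument is correct and complete: lattice-averaging to place $G$ in $\mathrm{GL}_n(\mathbb{Z})$, Minkowski's lemma that reduction modulo an odd prime $\ell$ is injective on finite subgroups, the choice via Dirichlet of $\ell$ a primitive root modulo $p^2$, and the lifting-the-exponent/Legendre computation do combine to give $v_p(|G|)\le E_p$; and your wreath-product realizations $(\mathbb{Z}/p)\wr\mathrm{Syl}_p(S_m)$ attain $E_p$ for every $p$, including $p=2$.

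The genuine gap is exactly where you flag it: the upper bound at $p=2$ is not proved, only described as ``decompose $\mathbb{Q}^n$ into irreducible $\mathbb{Q}[P]$-modules and do the bookkeeping as in Schur.'' That bookkeeping is the hard core of the theorem at $p=2$; the per-constituent bound is never stated, so one cannot check that the constituent exponents recombine to $\sum_{k\ge0}\lfloor n/2^k\rfloor$, and as written the proposal establishes the theorem only for odd primes. Note also that your remark that ``no reduction modulo a single prime can produce the sharp exponent'' is true only for reduction into $\mathrm{GL}_n(\mathbb{F}_\ell)$; the classical way to close the case $p=2$ stays inside your own framework. A finite $G\le\mathrm{GL}_n(\mathbb{Q})$ preserves a positive-definite rational quadratic form $q$ (average any one over $G$), so after conjugation $G$ lies in the automorphism group of an integral lattice equipped with $q$; reducing modulo an odd prime $\ell\equiv\pm3\pmod 8$ not dividing $\det q$ embeds $G$ (again by Minkowski's lemma) into the finite orthogonal group $O_n(\mathbb{F}_\ell)$, and the standard order formulas for $O_{2m+1}(\mathbb{F}_\ell)$ and $O_{2m}^{\pm}(\mathbb{F}_\ell)$, evaluated by the same lifting-the-exponent argument you already used, give $v_2\bigl(|O_n(\mathbb{F}_\ell)|\bigr)\le\sum_{k\ge0}\lfloor n/2^k\rfloor$ for either type of form; equality can hold, but the inequality is all that is needed. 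Replacing the unfinished Schur-style sketch by this orthogonal-group reduction would make your proof complete.
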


\subsection{Results of this paper}
The author generalized the results introduced in the previous subsection to those of right-angled Artin groups. As for (virtual) embeddings, methods in \cite{B&V} are made use of to give (virtual) embeddings of outer automorphism groups of right-angled Artin groups into outer automorphism groups of their particular subgroups. Finite subgroups of outer automorphism groups of right-angled Artin groups are also investigated to establish non-embeddability conditions. 

We start by introducing necessary notations related to graphs and groups in Subsection 1.3.

Section 2 is devoted to stating the definition and classical results about right-angled Artin groups and their automorphism groups.

In Section 3 we investigate virtual embeddings of  $\text{Out}(A_\Gamma)$, where $A_\Gamma$ is the RAAG whose underlying graph is $\Gamma$. First we state and prove Theorem 3.1, which claims the virtual embeddability of $\text{Out}(A_\Gamma)$ into $\text{Out}(L)$, where $L$ is a finite-index subgroup of $A_\Gamma$ and contains a normal subgroup $K$ whose quotient $A_\Gamma/K$ is centerless. Then, as a corollary, we provide a case such that $L$ has a presentation as a right-angled Artin group and its underlying graph is concretely given.

Section 4, in which we consider embeddings of $\text{Out}(A_\Gamma)$, has a similar structure to its predecessor. Subsection 4.1 states Theorem 4.1, which claims the embeddability of $\text{Out}(A_\Gamma)$ into $\text{Out}(N)$, where $N$ is a finite-index characteristic subgroup of $A_\Gamma$ satisfying several conditions depending on the shape of the underlying graph $\Gamma$. Subsection 4.2 is devoted to proving Theorem 4.1. In this subsection, we use a geometric approach: we investigate abelian coverings of the Salvetti complex $S_\Gamma$, whose universal abelian covering is naturally embedded into a Euclidean space. Subsection 4.3 deals with two corollaries, where $N$'s structure as a right-angled Artin group is given by using the Bass-Serre theory.

Section 5 is devoted to the opposite direction, that is, giving non-embeddability conditions. In this section, the main target is the pure (outer) automorphism group of $A_\Gamma$, defined in Subsection 5.1. In Subsection 5.2, the relationship between (outer) automorphism groups and pure (outer) automorphism groups is clarified. In Subsection 5.3 finite subgroups of pure (outer) automorphism groups are investigated and $p$-adic valuations and $\mathbb{Z}_p$-ranks are completely determined (Theorem \ref{finsub}). Some non-embeddability conditions are given as corollaries.

RAAGs interpolate between free abelian groups and free groups. In the case of free abelian groups, their (outer) automorphism groups are precisely the general linear groups over integers and embeddability problems between them are easy: $\text{GL}(m,\mathbb{Z})\hookrightarrow\text{GL}(n,\mathbb{Z})$ if and only if $m\leq n$. However, when it comes to the case of free groups, embeddability problems between their outer automorphism groups are difficult. (The necessary and sufficient condition of when $\text{Out}(F_m)$ can be embedded into $\text{Out}(F_n)$ is still open.) Hence embeddability problems between outer automorphism groups of RAAGs are more difficult. This paper offers ways to attack these problems though they are far from completely solved. 

This paper is a slightly shortened version of the author's master thesis.

\subsection{Notations and conventions}
A {\it graph} $\Gamma$ is a $1$-dimensional $\text{CW}$ complex. In the remainder of this paper, we assume that graphs are finite and simple, i.e., loops and multi-edges are not permitted. Here are some notations:
\begin{itemize}
\item $\text{V}(\Gamma)$ denotes the set of $0$-dimensional cells in $\Gamma$. An element of $\text{V}(\Gamma)$ is called a {\it vertex}.
\item $\text{E}(\Gamma)$ denotes the set of $1$-dimensional cells in $\Gamma$. An element of $\text{E}(\Gamma)$ is called an {\it edge}.
\item We say a subgraph (subcomplex) $\Lambda$ is {\it full} in $\Gamma$ if every edge $e\in \text{E}(\Gamma)$ whose boundary is contained in $\Lambda$ is also an element of $\text{E}(\Lambda)$.
\item We say a subgraph $\Lambda$ is {\it spanned} by $W\subset\text{V}(\Gamma)$ if it is the minimum full subgraph that contains $W$. 
\item $\text{lk}(v)$, the {\it link} of $v$, denotes the subgraph spanned by vertices adjacent to $v$.
\item $\text{st}(v)$, the {\it star} of $v$, denotes the subgraph spanned by $(\text{V}(\Gamma)\cap\text{lk}(v))\cup\{v\}$.
\item $\text{CC}(v)$ denotes the set of the connected components of $\Gamma-\text{st}(v)$.
\item $\text{Clique}(\Gamma)$ denotes the set of the {\it cliques} (complete subgraphs) in $\Gamma$.
\item $\text{Aut}(\Gamma):=\text{Homeo}(\Gamma)/\text{Homeo}_0(\Gamma)$ denotes the automorphism group of $\Gamma$, where $\text{Homeo}_0(\Gamma)$ denotes the group of self-homeomorphisms isotopic to the identity.
\item We define a preorder $\leq$ called {\it vertex domination} on $\text{V}(\Gamma)$: $v\leq w$ iff $\text{lk}(v)\subseteq\text{st}(w)$. When $v<w$, we say $w$ {\it dominates} $v$. This order defines an equivalence relation $\sim$ on $\text{V}(\Gamma)$: $v\sim w$ if and only if $v\leq w$ and $w\leq v$. We denote the class $v$ belongs to by $[v]$.%Notice that this equivalence relation is finer than the equivalence relation defined from the action of $\text{Aut}(A_\Gamma)$, that is, if $v\sim w$, there exists a graph automorphism $\sigma$ which sends $v$ to $w$.
\item Given a full subgraph $\Lambda$ of $\Gamma$, $\Gamma^d_\Lambda$ is defined as follows:
\begin{displaymath}
\Gamma^d_\Lambda=\left(\mathbb{Z}_d\times\Gamma\right)/\sim \text{, where $(a,x)\sim(b,y)$ if and only if $x=y\in\Lambda$}.
\end{displaymath}
\end{itemize}
As for groups:
\begin{itemize}
\item $\mathbb{Z}_d$ denotes the cyclic group of order $d$.
\item $S_n$ denotes the symmetric group of degree $n$.
\item $F_n$ denotes the free group of rank $n$.
\item $G^{*n}$ denotes the free product of $n$ copies of $G$.
\item $\displaystyle \mathop{*}_{i\in I}G_i$ denotes the free product of groups in $\{G_i\}_{i\in I}$.
\item $G^{\text{ab}}$ denotes the abelianization $G/[G,G]$ of a group $G$, where $[G,G]$ denotes the commutator subgroup of $G$.
\item $\text{Aut}_H(G)$ denotes the set of automorphisms of a group $G$ which fix a subgroup $H<G$ as a set.
\item By ``the conjugation by $g\in G$'', we mean the automorphism $\gamma_g$ of $G$ which sends each $x\in G$ to $gxg^{-1}$.
\item The commutator $[g,h]$ denotes $ghg^{-1}h^{-1}$.
\end{itemize}

%第二章　RAAG
\section{Right-angled Artin groups}
\subsection{Right-angled Artin groups}
\subsubsection{Definition}
Given a graph $\Gamma$, one can define the {\it Right-Angled Artin Group} (abbreviated by {\it RAAG}) $A_\Gamma$ with the underlying graph $\Gamma$ by the following presentation:
\begin{displaymath}
A_\Gamma=\langle \text{V}(\Gamma)\mid vw=wv,\ \text{for all } \{v,w\}\in \text{E}(\Gamma)\rangle.
\end{displaymath}
Here are some examples of RAAG:
\begin{itemize}
\item $A_{K_n}\cong \mathbb{Z}^n$, where $K_n$ denotes the complete graph with $n$ vertices.
\item $A_{\bar{K_n}}\cong F_n$, where $\bar{K_n}$ denotes the null graph with $n$ vertices.
\item $A_{\Gamma*\Lambda}\cong A_{\Gamma}\times A_{\Lambda}$, where $\Gamma*\Lambda$ denotes the simplicial join of $\Gamma$ and $\Lambda$.
\item $A_{\Gamma\sqcup\Lambda}\cong A_{\Gamma}* A_{\Lambda}$.
\end{itemize}
RAAGs can be viewed as groups which interpolate between free groups and free abelian groups.
\subsubsection{Salvetti complexes}
Given a graph $\Gamma$, we define the Salvetti complex $S_\Gamma$ as follows:
\[
S_\Gamma=\left.\left(\bigcup_{C\in \text{Clique}(\Gamma)}\{(x_v)_{v\in\text{V}(\Gamma)}\in\mathbb{R}^{|\text{V}(\Gamma)|}\mid x_w\in\mathbb{Z},\ \text{for all } w\notin C\}\right)\, \right/\mathbb{Z}^{|\text{V}(\Gamma)|}.
\]
$S_\Gamma$ is obtained from tori corresponding to maximal elements in $\text{Clique}(\Gamma)$ by gluing them together appropriately. It is shown by Charney and Davis \cite{C&D} that $S_\Gamma$ is a finite $K(A_\Gamma,1)$ complex.
\subsubsection{Bi-orderability of RAAGs}
It is shown by Duchamp and Krob\cite{D&K} that RAAGs are bi-orderable: we say a group $G$ is {\it bi-orderable} if there exists a total order on $G$ which is compatible with left and right multiplications. The {\it unique root property} of $A_\Gamma$ is derived from the bi-orderability:
\begin{lem}\label{URP}
Let $n$ be a positive integer. Then, for elements $x$, $y$ in $A_\Gamma$, $x^n=y^n$ implies $x=y$.
\end{lem}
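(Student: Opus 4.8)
The plan is to derive the statement purely from the bi-orderability of $A_\Gamma$ established by Duchamp and Krob, so I would fix once and for all a total order $<$ on $A_\Gamma$ that is invariant under both left and right multiplication: $a<b$ implies $ca<cb$ and $ac<bc$ for every $c\in A_\Gamma$. The whole argument then reduces to showing that the $n$-th power map is strictly order-preserving, i.e.\ that $x<y$ forces $x^n<y^n$. Once this monotonicity is available, the lemma follows immediately by trichotomy, so the entire content lies in establishing it.

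The key step is this monotonicity, which I would prove by induction on $n$. The base case $n=1$ is the hypothesis itself. For the inductive step, suppose $x^n<y^n$ together with $x<y$. Since the order is right-invariant, multiplying $x^n<y^n$ on the right by $x$ gives $x^{n+1}<y^n x$; since the order is left-invariant, multiplying $x<y$ on the left by $y^n$ gives $y^n x<y^{n+1}$. Chaining these by transitivity yields $x^{n+1}<y^{n+1}$, completing the induction. Symmetrically, $y<x$ forces $y^n<x^n$.

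With monotonicity in hand the conclusion is routine: as $<$ is a total order, exactly one of $x<y$, $x=y$, $y<x$ holds, and in the first and third cases the monotonicity gives $x^n<y^n$ respectively $y^n<x^n$, so in either case $x^n\neq y^n$. Hence $x^n=y^n$ can occur only when $x=y$. The one point deserving care — and the closest thing to an obstacle — is that $A_\Gamma$ is in general non-abelian, so $x^n$ and $y^n$ cannot be compared by a single one-sided cancellation; the inductive step genuinely requires \emph{both} invariances, using right-invariance to slide the trailing factor past $x$ and left-invariance to slide it past $y^n$. Everything else is formal.
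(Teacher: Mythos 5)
Your proof is correct and takes essentially the same approach as the paper: both derive the lemma from bi-orderability by showing that the $n$-th power map is strictly monotone, via an induction whose step uses both left- and right-invariance. The only (immaterial) difference is the intermediate term in the chain --- the paper passes through $x^k y$ (i.e.\ $x^{k+1}=x^kx<x^ky<y^ky=y^{k+1}$) where you pass through $y^n x$.
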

\begin{proof}
Assume on the contrary that $x\neq y$. We may assume $x<y$ without loss of generality. $x^k<y^k$ yields that $x^{k+1}=x^kx<x^ky<y^ky=y^{k+1}$. Hence we inductively have $x^n<y^n$, which is a contradiction.
\end{proof}
\begin{lem}\label{Aut embedding}
Let $A$ be a group having the unique root property. 
If $[A:N]<\infty$, then the restriction map $\phi\mapsto\phi|_N$ defines an embedding of $\text{Aut}_N(A)$ into $\text{Aut}(N)$.
\end{lem}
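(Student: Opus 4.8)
The plan is to verify that the restriction map is a well-defined homomorphism and then to establish injectivity, which is precisely where the unique root property will be used. Throughout, recall that $\text{Aut}_N(A)$ consists of the automorphisms of $A$ that preserve $N$ as a set.

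First I would check well-definedness. Since every $\phi\in\text{Aut}_N(A)$ satisfies $\phi(N)=N$, the restriction $\phi|_N$ is a bijective endomorphism of $N$, hence a genuine element of $\text{Aut}(N)$. The assignment $\phi\mapsto\phi|_N$ is plainly multiplicative, $(\phi\circ\psi)|_N=\phi|_N\circ\psi|_N$, so it is a group homomorphism $\text{Aut}_N(A)\to\text{Aut}(N)$. This part is routine.

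The heart of the argument is injectivity, i.e. that the kernel is trivial. So suppose $\phi|_N=\text{id}_N$ and aim to conclude $\phi=\text{id}_A$. The key observation, and the only place where $[A:N]<\infty$ enters, is that every element of $A$ admits a positive power lying in $N$: given $x\in A$, the cosets $N, xN, x^2N,\dots,x^{[A:N]}N$ number $[A:N]+1$, so by pigeonhole two of them coincide, say $x^iN=x^jN$ with $i<j$, whence $x^{n}\in N$ for $n=j-i\geq1$. Applying $\phi$ and using that $x^n\in N$ is fixed gives $\phi(x)^n=\phi(x^n)=x^n$. At this point the unique root property hypothesis on $A$ forces $\phi(x)=x$. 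As $x\in A$ was arbitrary, $\phi=\text{id}_A$, so the map is injective.

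I expect no serious obstacle; the subtle point requiring care is the final implication from $\phi(x)^n=x^n$ to $\phi(x)=x$, which is exactly where the unique root property is indispensable. Without it, an automorphism fixing $N$ pointwise could still permute distinct $n$-th roots of a common power $x^n\in N$ and thereby avoid being the identity, so the conclusion would genuinely fail. It is worth noting that Lemma \ref{URP} supplies this property in the RAAG setting, so the present lemma specializes to give an embedding $\text{Aut}_N(A_\Gamma)\hookrightarrow\text{Aut}(N)$ for any finite-index $N<A_\Gamma$.
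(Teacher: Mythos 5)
Your proof is correct and follows essentially the same route as the paper: produce a positive power of each $x\in A$ lying in $N$, apply $\phi$, and invoke the unique root property. The only (minor) difference is that the paper takes that power to be the index $[A:N]$ itself, justified via orders in the finite quotient, whereas your pigeonhole argument on the cosets $N, xN,\dots,x^{[A:N]}N$ yields some positive power in $N$ without any normality assumption on $N$ --- a slightly more careful justification of the same step.
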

\begin{proof}
Define $i$ to be the index $[A:N]$ of $N$ in $A$. We have $x^i\in N$ for every $x\in A$ as the order of every element in the finite group $A/N$ divides $i$, the order of the group. Assume that $\phi$ is in the kernel of the restriction map. Then, since $\phi(x)^i=\phi(x^i)=x^i$, the unique root property of $A$ yields that $\phi(x)=x$.
\end{proof}
\subsection{Automorphism groups of RAAGs}
The automorphism group $\text{Aut}(A_\Gamma)$ of a RAAG $A_\Gamma$ is generated by the following four types of elements known as the {\it Laurence-Servatius generators} (vertices not mentioned are fixed):
\begin{itemize}
\item {\it Inversions}: $\iota_v$ sends a vertex $v$ to its inverse.
\item {\it Graph symmetries}: $\bar{\sigma}$ ($\sigma\in\text{Aut}(\Gamma)$) sends each $v$ to $\sigma(v)$.
\item {\it (Left) Transvections}: $\lambda_{v,w}$ ($v\geq w$, $v\neq w$) sends $w$ to $vw$.
\item {\it Partial conjugations}:  $\gamma_{v,A}$ ($A\in \text{CC}(v)$) sends each $w$ in $A$ to $vwv^{-1}$.
\end{itemize}
Laurence \cite{L} conjectured and partially proved this fact and Servatius \cite{S} completed the proof.
The Laurence-Servatius generators consist of finite number of automorphisms, thus $\text{Aut}(A_\Gamma)$ is finitely generated. In fact, it is finitely presentable. Day\cite{D} has given a finite presentation, using a method called peak reduction. Before stating the result of Day, we introduce the notion of {\it Whitehead automorphisms}, by which Day gives the presentation.

Let $V$ denote the vertex set $\text{V}(\Gamma)$, and let $\bar{a}$ be $V\cap\{a, a^{-1}\}$ ($a\in V\cup V^{-1}$). A Whitehead automorphism is an element $\alpha\in\text{Aut}(A_\Gamma)$ of one of the following two types:
\begin{itemize}
\item[(1)] $\alpha$ restricted to $V\cup V^{-1}$ is a permutation of $V\cup V^{-1}$
\item[(2)] There is an element $a\in V\cup V^{-1}$, called the {\it multiplier} of $\alpha$, such that $\alpha(a)=a$ and for each $v\in V$, the element $\alpha(v)$ is in $\{v, va, a^{-1}v, a^{-1}va\}$.
\end{itemize}

There is a special notation for type (2) Whitehead automorphisms. Let $a\in A\subset L=V\cup V^{-1}$ such that $a^{-1}\notin A$. The symbol $(A, a)$ denotes the Whitehead automorphism satisfying 
$$
(A, a)(a)=a
$$
and for $v\in V-\bar{a}$,
$$
(A, a)(v)=\left\{
\begin{array}{ll}
v & \text{if } v\notin A \text{ and } v^{-1}\notin A \\ 
va & \text{if } v\in A \text{ and } v^{-1}\notin A \\ 
a^{-1}v & \text{if } v\notin A \text{ and } v^{-1}\in A \\
a^{-1}va & \text{if } v\in A \text{ and } v^{-1}\in A 
\end{array}
\right.
$$
if such automorphism exists. Say that $(A, a)$ is {\it well-defined} if the above formula defines an automorphism of $A_\Gamma$. Assume that every $(A, a)$ appearing in this paper is well-defined though we do not mention it in each case. A necessary and sufficient condition for $(A,a)$ to be well-defined is known:
\begin{lem}\cite[Lemma 2.5]{D}
For $A\subset L$ with $a\in A$ and $a^{-1}\notin A$, the automorphism $(A,a)$ is well-defined if and only if both of the following hold:
\begin{enumerate}
\item The set $V\cap A\cap A^{-1}-\text{lk}(\bar{a})$ is a union of connected components of $\Gamma-\text{st}(\bar{a})$
\item For each $x\in A-A^{-1}$, we have $a\geq x$.
\end{enumerate}
\end{lem}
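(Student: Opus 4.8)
The plan is to verify directly when the prescribed assignment on the generating set respects the defining relations $vw=wv$ of $A_\Gamma$, and then to upgrade the resulting endomorphism to an automorphism by exhibiting an explicit inverse. Write $d:=\bar{a}$ for the vertex underlying the multiplier. For a vertex $v\neq d$ I would record its \emph{type} by the pair $(\epsilon_v,\delta_v)\in\{0,1\}^2$ determined by $\alpha(v)=a^{-\epsilon_v}va^{\delta_v}$, so that type $(0,0)$ means $v,v^{-1}\notin A$, the two one-sided types $(0,1)$ and $(1,0)$ are the cases $v\in A-A^{-1}$ and $v^{-1}\in A-A^{-1}$ respectively, and type $(1,1)$ means $v\in V\cap A\cap A^{-1}$. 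With this bookkeeping $A-A^{-1}$ is exactly the set of one-sided vertices and $V\cap A\cap A^{-1}$ the set of type-$(1,1)$ vertices, so conditions (1) and (2) become statements purely about how these types are distributed relative to $\text{st}(d)$.

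The heart of the argument is an edge-by-edge analysis. An edge incident to $d$ is always preserved, since each $\alpha(v)$ is a word in $v$ and $a$ and every neighbour of $d$ commutes with both; likewise an edge with both endpoints in $\text{lk}(d)$ is preserved because $v$, $w$, $a$ then commute pairwise. The remaining edges $\{v,w\}$ have an endpoint outside $\text{st}(d)$, and I would split into two cases. If $v\in\text{lk}(d)$ and $w\notin\text{st}(d)$, a short computation using that $v$ commutes with $a$ while $w$ does not shows that $\alpha(v)$ and $\alpha(w)$ commute if and only if $\delta_v=\epsilon_v$, i.e. $v$ is not one-sided. If both $v,w\notin\text{st}(d)$ then they lie in a common connected component of $\Gamma-\text{st}(d)$, and using the standard facts that $\langle v,w\rangle\cong\mathbb{Z}^2$ whereas each of $v,w$ generates a free group with $a$ (so that $\langle a,v,w\rangle\cong\mathbb{Z}^2*\mathbb{Z}$), one checks by reducing normal forms that $\alpha(v)$ and $\alpha(w)$ commute precisely when $v$ and $w$ are either both of type $(0,0)$ or both of type $(1,1)$.

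Assembling these local conditions yields the theorem. The edges with one endpoint in $\text{lk}(d)$ force every one-sided vertex to have all of its neighbours inside $\text{st}(d)$, i.e. $\text{lk}(\bar{x})\subseteq\text{st}(d)$ for each $x\in A-A^{-1}$, which is exactly condition (2) (the case $x=a$ being vacuous). The edges internal to a component $C$ of $\Gamma-\text{st}(d)$ then force $C$ to be either entirely of type $(1,1)$ or entirely of type $(0,0)$ — one-sided vertices inside $C$ are already excluded by condition (2) — so the type-$(1,1)$ vertices outside $\text{lk}(d)$ form a union of components of $\Gamma-\text{st}(d)$, which is condition (1). Running the implications in reverse, by selecting the explicit offending edge produced by the analysis whenever (1) or (2) fails, gives the converse. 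Finally I would promote the endomorphism to an automorphism by checking that $((A\setminus\{a\})\cup\{a^{-1}\},a^{-1})$ satisfies the same two conditions — they depend only on the type distribution and on $\text{st}(d)$, both unchanged under $a\mapsto a^{-1}$ — and is a two-sided inverse on generators.

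The main obstacle is the commutation analysis for edges with both endpoints outside $\text{st}(d)$: one must determine, for every pair of types, whether $a^{-\epsilon_v}va^{\delta_v}$ and $a^{-\epsilon_w}wa^{\delta_w}$ commute, which requires a careful reduction of words in $\mathbb{Z}^2*\mathbb{Z}$, and then verify that these pointwise conditions aggregate \emph{exactly} into the global conditions (1) and (2) rather than into something stronger or weaker.
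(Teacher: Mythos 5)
The paper itself does not prove this lemma: it is imported verbatim from Day \cite{D} (his Lemma~2.5) as background for the presentation of $\text{Aut}(A_\Gamma)$, so there is no in-paper argument to compare yours against; the benchmark is whether your blind verification stands on its own. It essentially does. Your edge-by-edge decomposition is the right one (edges meeting $\text{st}(\bar{a})$, edges from $\text{lk}(\bar{a})$ to the outside, edges with both endpoints outside), and both commutation criteria check out: for adjacent $v\in\text{lk}(\bar{a})$, $w\notin\text{st}(\bar{a})$ one has $\alpha(v)=va^{\delta_v-\epsilon_v}$, and commutation with $\alpha(w)$ reduces to $[a^{\delta_v-\epsilon_v},w]=e$, which forces $\delta_v=\epsilon_v$ because $\langle\bar{a},w\rangle\cong F_2$; for adjacent $v,w\notin\text{st}(\bar{a})$ the normal-form computation in $\langle\bar{a},v,w\rangle\cong\mathbb{Z}^2*\mathbb{Z}$ does yield ``both of type $(0,0)$ or both of type $(1,1)$''. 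The upgrade from endomorphism to automorphism via the candidate inverse $(A-a+a^{-1},a^{-1})$, whose defining data produce the same type distribution and the same star, is also sound. One slip in the assembly, which you should repair although it does not break the argument: condition (2) is \emph{not} forced by the link-to-outside edges alone, as your text asserts. A one-sided vertex $x$ lying outside $\text{st}(\bar{a})$ is invisible to those edges; the requirement $\text{lk}(\bar{x})\subseteq\text{st}(\bar{a})$ for such $x$ comes from your both-endpoints-outside criterion, which rules out one-sided endpoints there directly. As written, the subsequent parenthetical ``one-sided vertices inside $C$ are already excluded by condition (2)'' is therefore mildly circular. The clean aggregation is: all defining relations are preserved if and only if every one-sided vertex has its link inside $\text{st}(\bar{a})$ \emph{and} every connected component of $\Gamma-\text{st}(\bar{a})$ has constant type in $\{(0,0),(1,1)\}$; this conjunction is precisely $(1)\wedge(2)$, with the singleton components accounting for one-sided vertices stranded outside $\text{st}(\bar{a})$.
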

Now we state the result of Day \cite{D}.
\begin{lem}\cite[Theorem A, Theorem 2.7]{D}\label{Day}
$$
\text{Aut}(A_\Gamma)=\langle\Omega\mid R\rangle,
$$
where $\Omega$ is the set of Whitehead automorphisms and the set of relations $R$ consists of:
\begin{enumerate}
\item $(A, a)^{-1}=(A-a+a^{-1}, a^{-1})$.
\item $(A, a)(B, a)=(A\cup B, a)$, where $A\cap B=\{a\}$.
\item $[(A, a),(B, b)]=e$, where $a^{\pm 1}\notin B$, $b^{\pm 1}\notin A$ and at least one of (a) $A\cap B=\emptyset$ or (b) $\bar{b}\in\text{lk}(\bar{a})$ holds.
\item $[(A, a),(B, b)]=(B-b+a, a)^{-1}$, where $a^{\pm 1}\notin B$, $b\notin A$, $b^{-1}\in A$ and at least one of (a) $A\cap B=\emptyset$ or (b) $\bar{b}\in\text{lk}(\bar{a})$ holds.
\item $(A-a+a^{-1}, b)(A, a)=(A-b+b^{-1}, a)\sigma_{a,b}$, where $b\in A$ with $b^{-1}\notin A$, $b\neq a$, $b\sim a$ and $\sigma_{a,b}$ is the type (1) Whitehead automorphism with $\sigma_{a,b}(a)=b^{-1}$, $\sigma_{a,b}(b)=a$.
\item $\sigma(A, a)\sigma^{-1}=(\sigma(A), \sigma(a))$, where $\sigma$ is a type (1) Whitehead automorphism.
\item The entire multiplication table of the type (1) Whitehead automorphisms.
\item $(A, a)=(L-a^{-1}, a)(L-A,a^{-1})$.
\item $[(A, a),(L-b^{-1}, b)]=e$, where $b^{\pm 1}\notin A$.
\item $[(A, a),(L-b^{-1}, b)]=(L-a^{-1}, a)$, where $a\neq b\in A$ and $b^{-1}\notin A$.
\end{enumerate}
%\text{id}_{A_\Gamma}
\end{lem}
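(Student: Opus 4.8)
The plan is to treat this as a two-part assertion: that the set $\Omega$ of Whitehead automorphisms generates $\text{Aut}(A_\Gamma)$, and that the listed relations $R$ are complete. Generation is the easy half. Each Laurence--Servatius generator is a Whitehead automorphism: inversions $\iota_v$ and graph symmetries $\bar\sigma$ are of type (1), since they permute $V\cup V^{-1}$, whereas transvections $\lambda_{v,w}$ and partial conjugations $\gamma_{v,A}$ are of type (2), with multiplier $v$ or $v^{-1}$ and an appropriate set $A$. Since Laurence and Servatius proved that these four families generate, so does $\Omega$. The entire content of the lemma is therefore the completeness of $R$, i.e. that every word in $\Omega$ representing the identity is a product of conjugates of the relators (1)--(10).

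For completeness I would follow the \emph{peak reduction} strategy, adapting Whitehead's method for free groups in the form systematized by McCool. First I would use the action of $\text{Aut}(A_\Gamma)$ on conjugacy classes and define a length function $|\cdot|$ via cyclically reduced word length in $A_\Gamma$; fixing a suitable finite test tuple $\mathbf{w}$ of conjugacy classes, every $\phi$ produces an orbit with well-defined lengths. The crucial step is the peak reduction theorem: any factorization of $\phi$ into Whitehead automorphisms can, after rewriting, be arranged so that the sequence of lengths $|\,\alpha_i\cdots\alpha_1(\mathbf{w})\,|$ has no interior local maximum (no ``peak''). I would prove this by analyzing, for two consecutive factors $(A,a)(B,b)$ that together raise and then lower the length, the relative position of the multipliers --- whether $\bar b\in\text{lk}(\bar a)$, whether $A\cap B=\emptyset$, and the domination relations among the relevant vertices --- and exhibiting in each case a non-increasing replacement. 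Throughout, the well-definedness criterion (Lemma 2.5 of \cite{D}) must be maintained so that every intermediate symbol $(A,a)$ genuinely names an automorphism.

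With peak reduction established I would run McCool's argument to extract the presentation. A word $w$ in $\Omega$ equal to the identity stabilizes $\mathbf{w}$; using the relations I would push $w$ into peak-reduced form, at which point it lies in the stabilizer of a minimal-length tuple. That stabilizer is generated by the length-preserving, i.e. type (1), automorphisms together with length-preserving type (2) moves, whose relations are precisely the multiplication table of type (1) automorphisms (relation (7)), relation (6) describing conjugation of a type (2) generator by a type (1) one, and relation (5) governing the swap of $\sim$-equivalent multipliers via $\sigma_{a,b}$. The remaining relations then record the elementary algebra of the generators: (1) and (8) for inverses and the factorization through $L$, (2) for merging multipliers, (3) and (4) for the two commutation patterns, and (9)--(10) for the long-range interactions with the maximal-support generators $(L-b^{-1},b)$. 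Matching each peak-lowering move and each stabilizer relation to one of (1)--(10) completes the argument.

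The step I expect to be the main obstacle is the peak reduction theorem together with its bookkeeping. Unlike the free-group case, cancellation in $A_\Gamma$ is governed by the commutation graph, so the length of $\alpha(\mathbf{w})$ depends subtly on which vertices commute with the multiplier; the case split distinguishing (3) from (4), and (9) from (10), is driven exactly by the alternatives $A\cap B=\emptyset$ versus $\bar b\in\text{lk}(\bar a)$, and the role of the domination preorder $\le$ in relation (5) and in well-definedness makes the combinatorics considerably heavier than Whitehead's original argument. Verifying that the finite list $R$ genuinely suffices --- that no further relation is forced by some configuration of $\Gamma$ --- is where the bulk of the work lies.
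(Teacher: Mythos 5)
This lemma is not proven in the paper at all: it is quoted verbatim from Day's article (Theorem A and Theorem 2.7 of \cite{D}), and the paper uses it as a black box to verify that the lifted affine maps define a homomorphism out of $\text{Aut}(A_\Gamma)$. So there is no in-paper argument to compare yours against. Your outline does correctly identify the two halves of the statement and the strategy of the cited source: generation follows from the Laurence--Servatius theorem once one checks each generator is a Whitehead automorphism (which you do correctly), and the completeness of $R$ is obtained by a peak-reduction argument in the style of Whitehead and McCool, extracting the presentation from stabilizers of minimal tuples of conjugacy classes. At that level of resolution your roadmap matches Day's actual proof.

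However, as a proof the proposal has a genuine gap, and it sits exactly where you locate ``the main obstacle.'' Your central claim --- that \emph{any} factorization of \emph{any} $\phi\in\text{Aut}(A_\Gamma)$ into Whitehead automorphisms can be rewritten, using the relations, into one with no interior peak with respect to a suitable tuple --- is not merely hard to verify in the RAAG setting; in the generality you state it, it is false. Day's peak reduction theorem holds only for products of \emph{long-range} Whitehead automorphisms (those whose multiplier is not adjacent to the vertices it multiplies), and he exhibits the failure of peak reduction once short-range moves, essentially the transvections $\lambda_{v,w}$ with $v$ adjacent to $w$, are allowed. The finite presentation is then assembled from a decomposition of $\text{Aut}(A_\Gamma)$ into a long-range piece, handled by McCool's stabilizer method, and a short-range piece, handled separately (the latter maps to a group of matrices and contributes relations of types (3), (5) and (7) in a different way than your stabilizer picture suggests). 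So the argument you propose would need to be restructured around this dichotomy before the bookkeeping you describe could even begin; simply transplanting the free-group template and checking cases against relations (1)--(10) does not go through. Since the paper only ever invokes this lemma as a citation, the pragmatic conclusion is that the statement should be used as an external input rather than reproven.
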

%第三章virtual embedding
\section{Virtual embeddings}
\begin{theo}\label{Virtual embedding}
Let $A$ be a finitely generated group having the unique root property, and let $K$ be a normal subgroup of $A$ such that the quotient $G=A/K$ is finite and centerless. Then $\text{Out}(A)$ can be virtually embedded into $\text{Out}(L)$, where $L$ is a subgroup of $A$ containing $K$.
\end{theo}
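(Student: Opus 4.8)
The plan is to produce an explicit finite-index subgroup of $\text{Out}(A)$ on which restriction to $L$ is a well-defined monomorphism into $\text{Out}(L)$. Write $G=A/K$ with quotient map $\pi\colon A\to G$. First I would introduce $\text{Aut}_K(A)$, the automorphisms preserving $K$ setwise, and inside it the subgroup $\widetilde\Gamma$ of those $\phi$ that also induce the identity on $G$. Since $A$ is finitely generated it has only finitely many subgroups of index $[A:K]=|G|$, and $\text{Aut}(A)$ permutes them, so $\text{Aut}_K(A)$ has finite index in $\text{Aut}(A)$; as $\text{Aut}(G)$ is finite, $\widetilde\Gamma$ has finite index as well.

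The first useful observation is that every $\phi\in\widetilde\Gamma$ preserves $L$: because $\phi$ induces the identity on $A/K$ and $K\leq L$, it carries the union of $K$-cosets comprising $L$ onto itself, so $\phi(L)=L$. Restriction therefore yields two homomorphisms out of $\widetilde\Gamma$: the map $p\colon\widetilde\Gamma\to\text{Out}(A)$ obtained by composing the inclusion with $\text{Aut}(A)\to\text{Out}(A)$, and the map $q\colon\widetilde\Gamma\to\text{Out}(L)$ sending $\phi$ to the class of $\phi|_L\in\text{Aut}(L)$. The image of $p$ is a finite-index subgroup $\Gamma\leq\text{Out}(A)$, and the whole strategy reduces to proving $\ker p=\ker q$, for then $q$ descends to an injection $\Gamma\cong\widetilde\Gamma/\ker p\hookrightarrow\text{Out}(L)$, which is exactly the desired virtual embedding.

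Computing $\ker p$ is where the centerless hypothesis enters. An inner automorphism $\gamma_a$ induces conjugation by $\pi(a)$ on $G$, so it lies in $\widetilde\Gamma$ if and only if $\pi(a)\in Z(G)$, which by assumption forces $a\in K$; hence $\ker p=\widetilde\Gamma\cap\text{Inn}(A)=\{\gamma_a:a\in K\}$, which I abbreviate $\text{Inn}_K$. For such generators $\gamma_a|_L$ is conjugation by $a\in K\leq L$, hence inner in $L$, giving $\text{Inn}_K\subseteq\ker q$.

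The hard part, and the heart of the proof, is the reverse inclusion $\ker q\subseteq\text{Inn}_K$, i.e. excluding inner automorphisms of $A$ that become \emph{outer} after restriction to $L$. Suppose $\phi\in\widetilde\Gamma$ with $\phi|_L=\gamma_c|_L$ for some $c\in L$. Then $\psi:=\gamma_c^{-1}\phi$ restricts to the identity on $L$; since $c\in L$ implies $\gamma_c$ preserves $L$, we have $\psi\in\text{Aut}_L(A)$, and $[A:L]\leq[A:K]<\infty$, so Lemma \ref{Aut embedding} applied with $N=L$ forces $\psi=\text{id}$. Thus $\phi=\gamma_c\in\widetilde\Gamma$, and invoking the centerless condition a second time gives $c\in K$, so $\phi\in\text{Inn}_K$. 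This yields $\ker p=\ker q=\text{Inn}_K$ and completes the argument. It is worth noting that the unique root property is used only through Lemma \ref{Aut embedding}, whereas the centerless hypothesis does the essential work of pinning down precisely which inner automorphisms survive in $\widetilde\Gamma$ and guaranteeing that they restrict to inner automorphisms of $L$.
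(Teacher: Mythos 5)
Your proof is correct and follows essentially the same route as the paper: the same subgroup $\widetilde\Gamma$ (called $B$ there) of automorphisms preserving $K$ and acting trivially on $A/K$, the same use of the centerless hypothesis to identify $\widetilde\Gamma\cap\text{Inn}(A)$ with conjugations by elements of $K$, the same appeal to Lemma \ref{Aut embedding}, and the same finite-index count. Your explicit verification that $\ker q\subseteq\ker p$ (via $\psi=\gamma_c^{-1}\phi$ and Lemma \ref{Aut embedding} applied with $N=L$) is a detail the paper compresses into the single phrase ``Lemma 2.2 tells us that the restriction map induces an embedding,'' so your write-up is, if anything, more complete.
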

\begin{proof}
Define $B$ to be the group of elements in $\text{Aut}_K(A)$ acting trivially on $A/K$ and note that, since the center of $G=A/K$ is trivial, $B\cap\text{Inn}(A)$ is contained in the set of conjugations by elements in $K$, and hence in $L$. Details are as follows: assume that the conjugation by $a\in A$ is in $B$, then we have for every $t\in A$, $atKa^{-1}=tK$. Since $K$ is normal, we have $atK=taK$, which yields that $aK\in Z(A/K)$.

Notice that, as $L$ is a disjoint union of cosets of $K$ in $A_\Gamma$, $\phi\in B<\text{Aut}_K(A)$ leaves $L$ invariant. In other words, $B$ is a subgroup of $\text{Aut}_{L}(A)$. Now, Lemma 2.2 tells us that the restriction map $B<\text{Aut}_K(A)\rightarrow \text{Aut}(L);\phi\mapsto\phi|_L$ induces an embedding of $B/(B\cap\text{Inn}(A))<\text{Out}(A)$ into $\text{Out}(L)=\text{Aut}(L)/\text{Inn}(L)$.

To see this is a virtual embedding, we consider the index of $B$ in $\text{Aut}(A)$. First, $[\text{Aut}(A):\text{Aut}_K(A)]$ is finite since the $\text{Aut}(A)$-orbit of $K$ is a finite set: recall the fact that a finitely generated group has only finitely many subgroups of a fixed index. Second, $[\text{Aut}_K(A):B]$ is finite since $\text{Aut}(A/K)$, the automorphism group of a finite group $A/K$, is also finite. Thus we have:
$$
[\text{Aut}(A):B]=[\text{Aut}(A):\text{Aut}_K(A)][\text{Aut}_K(A):B]<\infty.
$$
Since $B/(B\cap\text{Inn}(A))$ is the image of $B$ by the surjective homomorphism $\text{Aut}(A)\twoheadrightarrow\text{Out}(A)$, we have $[\text{Out}(A):B/(B\cap\text{Inn}(A))]<\infty$, which completes the proof.
\end{proof}
Using Theorem \ref{Virtual embedding}, we establish virtual embeddings between outer automorphism groups of RAAGs.
\begin{cor}\label{Virtualout}
Let $d$ be a positive integer and $v$ be a vertex in $\Gamma$. Then $\text{Out}(A_\Gamma)$ can be embedded virtually into $\text{Out}(A_{\Gamma^d_{\text{st}(v)}})$.
\end{cor}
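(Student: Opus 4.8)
The plan is to apply Theorem \ref{Virtual embedding} to $A=A_\Gamma$, which is finitely generated and enjoys the unique root property by Lemma \ref{URP}. Concretely, I would realize the subgroup $L$ of that theorem as a copy of $A_{\Gamma^d_{\text{st}(v)}}$ sitting inside $A_\Gamma$, and then manufacture a normal subgroup $K\subseteq L$ with finite centerless quotient. To this end, let $f\colon A_\Gamma\to\mathbb{Z}_d$ be the homomorphism sending $v\mapsto 1$ and every other vertex to $0$ (well defined since $\mathbb{Z}_d$ is abelian), and set $L=\ker f$, a normal subgroup of index $d$. If $v$ is adjacent to every other vertex, then $\text{st}(v)=\Gamma$, so $\Gamma^d_{\text{st}(v)}=\Gamma$ and the claim is the tautological embedding $\text{Out}(A_\Gamma)\hookrightarrow\text{Out}(A_\Gamma)$; hence I would assume from now on that $v$ has a non-neighbour.

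The first main step is to identify $L$ with $A_{\Gamma^d_{\text{st}(v)}}$. I would run the Reidemeister--Schreier process with the Schreier transversal $\{1,v,\dots,v^{d-1}\}$ (an alternative is Bass--Serre theory applied to the amalgam $A_{\Gamma-v}\ast_{A_{\text{lk}(v)}}A_{\text{st}(v)}$, but the rewriting process displays the underlying graph most directly). The resulting generators of $L$ are $\tilde v:=v^d$ together with $w_i:=v^iwv^{-i}$ for each $w\in\text{V}(\Gamma)\setminus\{v\}$ and $i\in\mathbb{Z}_d$. Rewriting the defining commutators of $A_\Gamma$ shows: a relation $[v,w]=1$ with $w\in\text{lk}(v)$ forces $w_0=w_1=\dots=w_{d-1}$ (call the common value $\tilde w$) and makes $\tilde w$ commute with $\tilde v$; a relation $[w,w']=1$ with $w,w'\neq v$ yields exactly $[w_i,w'_i]=1$ for each fixed $i$; and no relation couples $\tilde v$ to any $w_i$ with $w\notin\text{st}(v)$, nor couples $w_i$ to $w'_j$ for $i\neq j$. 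Matching generators to vertices (those of $\text{st}(v)$ surviving once, those outside $\text{st}(v)$ appearing in $d$ copies) and comparing the surviving commutations with the edges of the defining quotient of $\Gamma^d_{\text{st}(v)}$ gives precisely its right-angled Artin presentation. This bookkeeping is the step I expect to be the main obstacle: one must track the wrap-around index $i=d-1$ carefully (where $\tilde v$ re-enters the rewriting) and, to rule out collapses beyond those listed, appeal to completeness of the Reidemeister--Schreier presentation rather than to ad hoc cancellation.

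It remains to produce $K$. Since $\mathbb{Z}_d$ is abelian, $L=\ker f$ cannot itself serve, so I would enlarge the target to a centerless group. Choose a prime $p\equiv 1\pmod d$ (Dirichlet) and an element $\zeta\in\mathbb{Z}_p^\times$ of order $d$, and form $G=\mathbb{Z}_p\rtimes\mathbb{Z}_d$ with $\mathbb{Z}_d$ acting by multiplication by $\zeta$; since this action is faithful and fixed-point-free, $G$ is centerless, and projection to the second factor gives a surjection $G\twoheadrightarrow\mathbb{Z}_d$. Using a non-neighbour $w_\ast$ of $v$, define $\rho\colon A_\Gamma\to G$ by $v\mapsto(0,1)$, $w_\ast\mapsto(1,0)$, and every remaining vertex $\mapsto(0,0)$; one checks that this respects all commutation relations (each edge either joins $v$ to a neighbour mapped to the identity, or joins two vertices landing in the abelian subgroup $\mathbb{Z}_p\times\{0\}$) and is surjective (the conjugates $(\zeta^k,0)$ already span $\mathbb{Z}_p$). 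By construction the composite $A_\Gamma\xrightarrow{\rho}G\twoheadrightarrow\mathbb{Z}_d$ equals $f$, so $K:=\ker\rho\subseteq\ker f=L$ and $A_\Gamma/K\cong G$ is finite and centerless. Theorem \ref{Virtual embedding} then yields a virtual embedding $\text{Out}(A_\Gamma)\hookrightarrow\text{Out}(L)=\text{Out}(A_{\Gamma^d_{\text{st}(v)}})$, as desired.
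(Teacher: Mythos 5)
Your proposal is correct and follows essentially the same route as the paper: both reduce to Theorem \ref{Virtual embedding} via the surjection $A_\Gamma\twoheadrightarrow\mathbb{Z}_p\rtimes\mathbb{Z}_d$ (with $p\equiv1\pmod d$ from Dirichlet, the faithful fixed-point-free action giving centerlessness), take $K$ and $L$ to be the kernels of this map and of its composition with the projection to $\mathbb{Z}_d$, and identify $L\cong A_{\Gamma^d_{\text{st}(v)}}$ by Reidemeister--Schreier with the transversal $\{1,v,\dots,v^{d-1}\}$. The only difference is expository (you define $L$ first and then build $K$ inside it, and you spell out the well-definedness and surjectivity checks that the paper leaves implicit).
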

\begin{proof}
We may assume that $\Gamma-\text{st}(v)$ is not empty, hence has vertex $w$: assume that $\text{st}(v)$ coincides with $\Gamma$, then the claim is trivial since $\Gamma^d_\Gamma=\Gamma$.
From Dirichlet's theorem on arithmetic progressions, there exists a prime $p\equiv1\pmod{d}$. We apply Theorem 3.1 to the case where $L$ is the kernel of the composition map of $f\colon A_\Gamma\twoheadrightarrow \mathbb{Z}_p\rtimes_\alpha\mathbb{Z}_d=G$ and $\pi\colon G\twoheadrightarrow\mathbb{Z}_d$. Here, $f$ sends $v$ (respectively, $w$) to the generator of $\mathbb{Z}_d$ (respectively, that of $\mathbb{Z}_p$) and sends other vertices to zero.

We define the semi-direct product structure of $G$ by $\alpha\colon\mathbb{Z}_d\hookrightarrow\text{Aut}(\mathbb{Z}_p)\cong\mathbb{Z}_{p-1};1\in\mathbb{Z}_d\mapsto\frac{p-1}{d}\in\mathbb{Z}_{p-1}$, and hence, $G$ is centerless. Details are as follows: let $(a,b)\in\mathbb{Z}_p\rtimes_\alpha\mathbb{Z}_d$ be an element in the center of $G$. Then we have $(1,0)(a,b)=(a,b)(1,0)$ and $(0,1)(a,b)=(a,b)(0,1)$. These equations are rewrited to $(a+1,b)=(a+\alpha_b(1),b)$ and $(\alpha_1(a),b+1)=(a,b+1)$, respectively. Thus we have $a=b=0$, since the homomorphism $\alpha$ is injective and an automorphism $\phi$ of $\mathbb{Z}_p$ has no non-trivial fixed points unless it is the identity mapping.

The isomorphism $L\cong A_{\Gamma_{\text{st}(c)}^d}$ is shown by using the Reidemeister-Schreier procedure: we adopt $\{v^k\mid 0\leq k\leq d-1\}$ as a Schreier transversal for $L<A_\Gamma$. Then we have $\{w\mid w\in\text{st}(v)\}\cup\{w_k=v^kwv^{-k}\mid w\notin\text{st}(v),\ 0\leq k\leq d-1\}$ generates $L$ and the set of relations is $\{vw=wv\mid w\in\text{lk}(v)\}\cup\{w_kw'_k=w'_kw_k\mid w,w'\notin\text{lk}(v),\ 0\leq k\leq d-1\}$.
\end{proof}
Note that when the underlying graph $\Gamma$ is a null graph, this is Theorem \ref{virBV}: $\bar{K_n}_{\text{st}(v)}^d=\bar{K}_{d(n-1)+1}$.
Note also that by applying Corollary \ref{Virtualout} repeatedly, various virtual embeddings of $\text{Out}(A_\Gamma)$ are constructed.
\section{Embeddings}
\subsection{Statement of the theorem}
\begin{theo}\label{Embedding}
Let $A_\Gamma$ be a centerless RAAG and let $K$ be the kernel of the homomorphism $A_\Gamma \twoheadrightarrow \prod_{v\in\text{V}(\Gamma)}\mathbb{Z}_{r_v}$($r_v\in\mathbb{Z}_{>0}$): this map sends $v$ to $1\in\mathbb{Z}_{r_v}$. Given a characteristic subgroup $N$ of $A_\Gamma$ containing $K$, $\text{Out}(A_\Gamma)$ can be embedded into $\text{Out}(N)$ if there exist integers $s_{v,A}$ ($v\in\text{V}(\Gamma)$, $A\in \text{CC}(v)$) satisfying the following five conditions:
\begin{enumerate}
\item $\sum_{A\in\text{CC}(v)}s_{v,A}=\sum_{B\in\text{CC}(w)}s_{w,B}$ for all $v\leq w$.
\item $s_{v,A}=\sum_{B\subseteq A}s_{w,B}$ for all $v\leq w$ and $A(\in\text{CC}(v))$ not containing $w$.
\item $s_{v,A}=s_{\sigma(v),\sigma(A)}$ for all $\sigma\in\text{Aut}(\Gamma)$.
\item $s_{w,\{v\}}$ does not depend on $w$ and is equal to $0$ if there exists a vertex $w'$ such that $v\leq w'$ and $\{v,w'\}\in\text{E}(\Gamma)$. 
\item $\sum_{A\in\text{CC}(v)}s_{v,A}\equiv-1 \pmod{r_v}$ for all $v\in\text{V}(\Gamma)$.
\end{enumerate}
\end{theo}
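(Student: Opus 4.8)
The plan is to realize the desired embedding as a suitable twist of the restriction homomorphism, and to read off the five numerical conditions as exactly the constraints that make the twist consistent. First I would record the structural consequences of the hypotheses. Since the target $\prod_v\mathbb{Z}_{r_v}$ is abelian, $K$ contains the commutator subgroup, so $N$ is a finite-index \emph{normal} subgroup and $Q:=A_\Gamma/N$ is a finite abelian group. Because $A_\Gamma$ is centerless and has the unique root property (Lemma \ref{URP}), its centralizer $C_{A_\Gamma}(N)$ is trivial: if $z$ centralizes $N$ then $zx^{[A_\Gamma:N]}z^{-1}=x^{[A_\Gamma:N]}$ for every $x$ (as $x^{[A_\Gamma:N]}\in N$), whence $zxz^{-1}=x$ by unique roots. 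As $N$ is characteristic, Lemma \ref{Aut embedding} already gives an \emph{injective} restriction map $\mathrm{Aut}(A_\Gamma)\hookrightarrow\mathrm{Aut}(N)$. The obstruction to passing to outer groups is that $\gamma_g|_N$ is inner in $N$ precisely when $g\in N$ (again by triviality of $C_{A_\Gamma}(N)$), so $\mathrm{Inn}(A_\Gamma)$ is \emph{not} carried into $\mathrm{Inn}(N)$ and plain restriction does not descend.

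To repair this I would look for a crossed homomorphism $c\colon\mathrm{Aut}(A_\Gamma)\to A_\Gamma$, i.e. a map with $c(\phi\psi)=c(\phi)\,\phi(c(\psi))$, and define $\Theta(\phi)=\gamma_{c(\phi)}\circ(\phi|_N)$. A direct computation shows the cocycle identity makes $\Theta$ a homomorphism into $\mathrm{Aut}(N)$. If in addition $c(\gamma_g)\equiv g^{-1}\pmod N$ for all $g$, then $\Theta(\gamma_g)=\gamma_{c(\gamma_g)g}|_N$ is inner in $N$, so $\Theta$ descends to a homomorphism $\mathrm{Out}(A_\Gamma)\to\mathrm{Out}(N)$. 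Injectivity is then automatic: if $\Theta(\phi)=\gamma_h|_N$ with $h\in N$, then $\gamma_{h^{-1}c(\phi)}\circ\phi$ restricts to the identity on $N$, hence is the identity by Lemma \ref{Aut embedding}, forcing $\phi\in\mathrm{Inn}(A_\Gamma)$. Thus the whole theorem should reduce to constructing one cocycle $c$ with the single congruence property $c(\gamma_g)\equiv g^{-1}\pmod N$.

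Now the $s_{v,A}$ enter. Because $N$ contains the commutator subgroup, the congruence condition only sees the abelianization, so I would take $c$ to be valued in $A_\Gamma^{\mathrm{ab}}\cong\mathbb{Z}^{\mathrm V(\Gamma)}$ with the induced $\mathrm{Aut}(A_\Gamma)$-action on $H_1$; geometrically these are the integer translation vectors by which the generating self-maps of the Salvetti complex $S_\Gamma$ are lifted to the abelian cover corresponding to $N$, sitting inside the Euclidean cover $\mathbb{R}^{\mathrm V(\Gamma)}$. On the Laurence--Servatius generators I would set $c(\gamma_{v,A})=s_{v,A}\,e_v$ and take the (forced) values on inversions, graph symmetries and transvections. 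Since $\gamma_v=\prod_{A\in\mathrm{CC}(v)}\gamma_{v,A}$ and partial conjugations act trivially on $H_1$, one gets $c(\gamma_v)=\bigl(\sum_{A}s_{v,A}\bigr)e_v$, and the requirement $c(\gamma_v)\equiv -e_v\pmod N$ in the $v$-coordinate is exactly Condition $5$. Conditions $1$--$4$ are the remaining consistency constraints: Condition $3$ is equivariance under graph symmetries (needed for Day's relation $\sigma(A,a)\sigma^{-1}=(\sigma(A),\sigma(a))$), Condition $4$ governs the singleton components $\{v\}$ that appear in the transvection/partial-conjugation relations, and Conditions $1$--$2$ encode the compatibility forced by vertex domination $v\le w$, under which each relevant component of $\Gamma-\mathrm{st}(v)$ decomposes into components of $\Gamma-\mathrm{st}(w)$.

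The main obstacle, and the real content of the argument, is verifying that the generator-level assignment $c(\gamma_{v,A})=s_{v,A}e_v$ actually extends to a genuine crossed homomorphism on all of $\mathrm{Aut}(A_\Gamma)$ --- equivalently, that the chosen lifts of the generating self-maps of $S_\Gamma$ satisfy the same relations as the generators themselves. Concretely I would run through the relations of Day's presentation (Lemma \ref{Day}) and check that $c$ assigns equal values to both sides of each; I expect Conditions $1$, $2$ and $4$ to be exactly what is consumed in the relations mixing transvections with partial conjugations along dominating pairs $v\le w$, while Condition $3$ handles the symmetry relations. Once this finite but delicate bookkeeping is complete, the cocycle exists, $\Theta$ descends and is injective, and the embedding $\mathrm{Out}(A_\Gamma)\hookrightarrow\mathrm{Out}(N)$ follows.
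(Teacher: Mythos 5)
Your structural reduction is correct, and it is in substance the same as the paper's: the paper reduces Theorem \ref{Embedding} to splitting the extension $\{e\}\to A_\Gamma/N\to\text{Aut}(A_\Gamma)/N\to\text{Out}(A_\Gamma)\to\{e\}$ and builds the splitting by lifting generators to affine maps on the universal abelian cover $U\subset\mathbb{R}^{|\text{V}(\Gamma)|}$ of $S_\Gamma$; the translation parts of those affine lifts compose by exactly your cocycle rule, so your crossed homomorphism $c$ is literally the paper's vector of shifts, and your $\Theta$, its descent, and the injectivity argument via Lemma \ref{Aut embedding} mirror the paper's logic (the paper routes the splitting through \cite[Corollary 8.5]{B&V}, i.e.\ Lemma \ref{Vogtmann}, together with the same triviality-of-centralizer check you give; your purely algebraic packaging of that step is fine and arguably cleaner).

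The genuine gap is that you never establish that the cocycle exists. You define $c$ only on partial conjugations, say the remaining values are ``forced,'' and then state that you ``would run through'' Day's relations and ``expect'' Conditions 1--4 to be what is consumed. That verification is not routine bookkeeping to be deferred: it is the entire content of the theorem, and it is where all five hypotheses are actually used (the paper's proof is mostly this computation: Condition 2 enters in relation 4 of Lemma \ref{Day}, Condition 3 in relations 6 and 7, Condition 1 in relation 10, Condition 5 in killing $\text{Inn}(A_\Gamma)$, and so on). Worse, the ``forced'' values you wave at conceal a point where a naive guess fails: the inversion $\iota_v$ must be assigned the generally nonzero value $c(\iota_v)=s_{[v]}e_v$, and the well-definedness of $s_{[v]}$ (independence of $s_{w,\{v\}}$ from the dominating vertex $w$) is precisely Condition 4. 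If one instead takes $c(\iota_v)=0$, relation 5 of Lemma \ref{Day} breaks whenever $s_{[v]}\neq0$: in its Laurence--Servatius form $\lambda_{w,v}\lambda_{v,w}^{-1}=\rho_{v,w}\tau_{vw}\iota_w$ the left side has zero shift, while $\rho_{v,w}=\lambda_{v,w}\gamma_{v,\{w\}}^{-1}$ contributes a shift $-s_{[w]}$ on the right that is cancelled only by the shift of $\iota_w$. So what you have is a correct strategy and a correct reduction, with the decisive step --- checking all ten families of relations in Day's presentation for the shifted lifts --- missing, and with at least one of the unspecified choices being a place where the argument would collapse if made carelessly.
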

We define $s_{v}$ to be $s_{w,\{v\}}$ if there exists a vertex $w$ not adjacent to $v$ such that $w\geq v$ and $0$ if there exists no such vertices. Note that, if $v\sim v'$, then $s_v=s_{v'}$ from the third and the fourth conditions in Theorem \ref{Embedding}. Hence we use $s_{[v]}$ instead of $s_v$.
\subsection{Proof of Theorem \ref{Embedding}}
The proof of Theorem \ref{Embedding} consists of several steps.

First, we provide a sufficient condition in the form of the existence of a splitting of a short exact sequence. Then interpret the exact sequence geometrically. This is achieved by using a result of Bridson and Vogtmann \cite[Corollary 8.5]{B&V}. Confirm that the assumption of Lemma \ref{Vogtmann} is satisfied. Then what has to be done is to construct a splitting.

The construction of a splitting is as follows.
We first assign an image to each generator of the Laurence-Servatius generators. Then, by using Lemma \ref{Day}, we verify that the map can be extended to a group homomorphism defined on the whole $\text{Aut}(A_\Gamma)$. We also have to confirm that inner automorphisms are killed to show that the map in fact gives an embedding of $\text{Out}(A_\Gamma)$.

\subsubsection{Sufficient condition and its geometric interpretation}
Since $N$ is characteristic in $A_\Gamma$, we obtain that $\text{Aut}(A_\Gamma)=\text{Aut}_{N}(A_\Gamma)$ injects into $\text{Aut}(N)$ from Lemma \ref{Aut embedding}. Thus, in order to prove Theorem \ref{Embedding}, it is enough to show that the short exact sequence
\begin{displaymath}
\{e\}\rightarrow A_\Gamma/N\hookrightarrow\text{Aut}(A_\Gamma)/N\twoheadrightarrow\text{Out}(A_\Gamma)\rightarrow\{e\}
\end{displaymath}
splits.
Here, we naturally identify $\text{Inn}(A_\Gamma)$ and its subgroup $\{\text{(the conjugation with }g)\in\text{Aut}(A_\Gamma)\mid g\in N\}\cong\text{Inn}(N)$ with $A_\Gamma$ and $N$.

The sufficiency of the existence of a splitting is obtained from the sequence of injections:
\begin{displaymath}
\text{Out}(A_\Gamma)\hookrightarrow\text{Aut}(A_\Gamma)/N\hookrightarrow\text{Aut}(N)/N=\text{Out}(N).
\end{displaymath}

Before recalling a result by Bridson and Vogtmann \cite{B&V}, we introduce some notations:
\begin{itemize}
\item $\text{Deck}(\hat{X},X)$ denotes the covering transformation group of the covering $p\colon \hat{X}\rightarrow X$. That is, $\text{Deck}(\hat{X},X)=\{f\colon\hat{X}\rightarrow\hat{X}\mid p\circ f=p\}$.
\item $\text{fhe}(\hat{X})$ denotes the set of self-homotopy equivalences of $\hat{X}$ that are fiber-preserving, endowed with the compact-open topology: $\text{fhe}(\hat{X})=\{\hat{h}\colon\hat{X}\rightarrow\hat{X}\mid p(\hat{x})=p(\hat{y})\Rightarrow p\circ\hat{h}(\hat{x})=p\circ\hat{h}(\hat{y})\}$.
\item $\text{FHE}(\hat{X})$ denotes the group consisting of connected components of $\text{fhe}(\hat{X})$:
$$
\text{FHE}(\hat{X})=\pi_0(\text{fhe}(\hat{X})).
$$
\item $\text{he}(X)$ denotes the set of self-homotopy equivalences of $X$.
\item $\text{HE}(X)=\pi_0(\text{he}(X))$.
\item $\text{he}_{\bullet}(X)$($\subset\text{he}(X)$) denotes the set of base point-fixing homotopy equivalences.
\item $\text{HE}_{\bullet}(X)=\pi_0(\text{he}_{\bullet}(X))$.
\end{itemize}
The following gives us a geometric interpretation of the short exact sequence.
\begin{lem}\cite[Corollary 8.5]{B&V}\label{Vogtmann}
Let $X$ be a $K(\pi, 1)$ space and let $p\colon \hat{X}\rightarrow X$ be a covering space with $N=p_*\pi_1(\hat{X})$ characteristic in $\pi$. If the centralizer $Z_\pi(N)$ is trivial, then the following diagram of groups is commutative and the vertical maps are isomorphisms:
$$
\xymatrix{
\{e\} \ar[r] & \pi/N \ar[d]\ar[r]& \text{Aut}(\pi)/N \ar[d]\ar[r]&\text{Out}(\pi) \ar[d]\ar[r]&\{e\} \\
\{e\} \ar[r] & \text{Deck}(\hat{X},X)\ar[r]& \text{FHE}(\hat{X}) \ar[r]&\text{HE}(X)\ar[r]&\{e\}
}
$$
where $\pi/N\rightarrow\text{Aut}(\pi)/N$ is the map induced by the action of $\pi$ on itself by inner automorphisms.
\end{lem}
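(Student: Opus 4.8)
The plan is to reduce the lemma to three ingredients: covering-space theory, the homotopy classification of maps between aspherical complexes, and an analysis of fiber-preserving equivalences. First I would record that a covering of a $K(\pi,1)$ is again aspherical, so $\hat X$ is a $K(N,1)$ with $\pi_1(\hat X)\cong N$ via $p_*$. Since $N$ is characteristic, hence normal, in $\pi$, the covering $p$ is regular, and classical covering theory supplies the leftmost vertical isomorphism $\pi/N\xrightarrow{\sim}\text{Deck}(\hat X,X)$, the coset $gN$ corresponding to the deck transformation that acts on $\pi_1(\hat X)=N$ as conjugation by $g$. For the rightmost map I would invoke the standard fact that for a connected CW complex of type $K(\pi,1)$ the assignment $h\mapsto[h_*]$ gives isomorphisms $\text{Out}(\pi)\xrightarrow{\sim}\text{HE}(X)$ and $\text{Aut}(\pi)\xrightarrow{\sim}\text{HE}_\bullet(X)$, since homotopy classes of maps between aspherical spaces biject with conjugacy classes of homomorphisms of fundamental groups.

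The heart of the proof is the middle vertical isomorphism $\text{Aut}(\pi)/N\xrightarrow{\sim}\text{FHE}(\hat X)$. Given $\phi\in\text{Aut}(\pi)$, asphericity realizes it as $h_*$ for a based self-homotopy equivalence $h$ of $X$; since $N$ is characteristic, $\phi(N)=N$, so $h$ lifts to a fiber-preserving self-homotopy equivalence $\hat h$ of $\hat X$, unique up to a deck transformation and fiber-preserving homotopy. This defines a homomorphism $\text{Aut}(\pi)\to\text{FHE}(\hat X)$. Conversely, a fiber-preserving equivalence $\hat h$ descends to some $h\colon X\to X$, and a choice of basepoint lift makes $h_*\in\text{Aut}(\pi)$ recover $\phi$. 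I would then check, by tracking the basepoint along a free fiber-preserving homotopy, that passing from based to free homotopy classes alters $h_*$ precisely by conjugation by elements of $N=\pi_1(\hat X)$; hence the construction descends to a well-defined map $\text{Aut}(\pi)/\text{Inn}(N)=\text{Aut}(\pi)/N\to\text{FHE}(\hat X)$, which asphericity shows to be a bijection.

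Commutativity of the two squares then follows by unwinding the definitions: the left square compares the inner action of $\pi$ on itself with the conjugation action of deck transformations on $\pi_1(\hat X)$, and the right square records that the automorphism $h_*$ attached to a fiber-preserving equivalence classifies the descended map $h$ in $\text{HE}(X)\cong\text{Out}(\pi)$. The top row is exact because $N$ characteristic makes $\text{Inn}(N)$ normal in $\text{Aut}(\pi)$, so the quotients are defined and the cokernel of $\text{Inn}(\pi)/\text{Inn}(N)$ inside $\text{Aut}(\pi)/\text{Inn}(N)$ is $\text{Out}(\pi)$. Once the three vertical maps are isomorphisms and the diagram commutes, exactness of the bottom row is immediate from that of the top row by the five lemma.

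The step I expect to be the main obstacle, and the only place where the hypothesis $Z_\pi(N)=\{e\}$ is genuinely used, is guaranteeing that every quotient ``by $N$'' is by a faithful copy of $N$ and that the left column does not collapse. Concretely, the embedding $N\hookrightarrow\text{Inn}(\pi)$, $n\mapsto\gamma_n$, is injective exactly because $N\cap Z(\pi)\subseteq Z_\pi(N)=\{e\}$; and a deck transformation $gN$ maps to the identity in $\text{FHE}(\hat X)$ only if conjugation by $g$ agrees on $N$ with conjugation by some $n\in N$, i.e. $n^{-1}g\in Z_\pi(N)=\{e\}$, which forces $g\in N$ and $gN$ trivial. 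Thus $Z_\pi(N)=\{e\}$ is precisely what keeps the deck group, and with it the whole left-hand column, faithful, and I would isolate this computation as the key lemma supporting the diagram.
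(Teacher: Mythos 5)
You are attempting a statement the paper itself never proves: Lemma \ref{Vogtmann} is quoted from \cite[Corollary 8.5]{B&V}, so your attempt has to be measured against the argument in that source rather than against anything in this paper. Your skeleton is the correct and standard one (identify $\pi/N$ with $\text{Deck}(\hat{X},X)$ and $\text{Out}(\pi)$ with $\text{HE}(X)$, and build the middle identification by lifting), but two steps are genuinely broken or missing. The first is the well-definedness of your map $\text{Aut}(\pi)\rightarrow\text{FHE}(\hat{X})$. You say the lift $\hat{h}$ of $h$ is ``unique up to a deck transformation and fiber-preserving homotopy'' and that this ``defines a homomorphism''. It does not: distinct deck translates of a lift represent \emph{distinct} classes in $\text{FHE}(\hat{X})$ --- the injectivity of $\text{Deck}(\hat{X},X)\rightarrow\text{FHE}(\hat{X})$ is part of what the lemma asserts --- so your assignment is multivalued, with indeterminacy a full coset of the image of the deck group; this is not the $\text{Inn}(N)$-indeterminacy you invoke when passing from based to free homotopy classes, and quotienting $\text{Aut}(\pi)$ by $N$ does not cure it. The standard repair must be said explicitly: fix $\hat{x}_0$ over $x_0$, represent $\phi$ by a based equivalence $h$, and take the \emph{unique} lift $\hat{h}$ with $\hat{h}(\hat{x}_0)=\hat{x}_0$ (it exists because $\phi(N)=N=p_*\pi_1(\hat{X},\hat{x}_0)$ and $N$ is normal). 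Then $\phi\mapsto[\hat{h}]$ is well defined, the deck classes lie in its image (they are the images of the conjugations $\gamma_g$, $g\in\pi$), and the basepoint-tracking computation you sketch correctly identifies its kernel as $\{\gamma_n\mid n\in N\}$.

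The second gap is more serious. You assert that a fiber-preserving equivalence $\hat{h}$ descends to $h$ with $h_*\in\text{Aut}(\pi)$, and you claim explicitly that $Z_\pi(N)=\{e\}$ is used only to keep the left-hand column faithful. Both claims are false: that the descended map of a fiber-preserving self-homotopy equivalence is again a homotopy equivalence is not formal, and it is exactly the second place where the hypothesis works. Take $X=T^2$, $\pi=\mathbb{Z}^2$, $N=\{e\}$ (characteristic, with $Z_\pi(N)=\pi$), $\hat{X}=\mathbb{R}^2$: the doubling map $x\mapsto 2x$ is fiber-preserving and is a self-homotopy equivalence of the contractible space $\hat{X}$, yet it descends to a degree-$4$ self-map of $T^2$, which is no equivalence; consequently $\text{FHE}(\hat{X})$ is not even a group and $\text{FHE}(\hat{X})\rightarrow\text{HE}(X)$ is undefined. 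Under the hypothesis, injectivity of $h_*$ does follow: $h_*|_N=p_*\hat{h}_*p_*^{-1}$ is injective, and $h_*(g)=e$ forces $gng^{-1}=n$ for every $n\in N$ (apply $h_*$, injective on $N$, to $gng^{-1}\in N$), whence $g\in Z_\pi(N)=\{e\}$. Surjectivity of $h_*$ is a further point your text never touches: it is automatic when $[\pi:N]<\infty$, since an injective endomorphism $\phi$ with $\phi(N)=N$ satisfies $[\pi:N]=[\pi:\phi(\pi)]\,[\phi(\pi):\phi(N)]=[\pi:\phi(\pi)]\,[\pi:N]$, which covers every application in this paper (the subgroup $N$ in Theorem \ref{Embedding} has finite index), but in the generality stated one must produce a fiber-preserving homotopy inverse, as Bridson and Vogtmann do. Without these two repairs, neither the surjectivity of your middle vertical map nor the existence and exactness of the bottom row is established.
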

We apply this lemma to the following settings: $\pi=A_\Gamma$, $X=S_\Gamma$ and $p\colon\hat{X}\rightarrow X$ is the covering corresponding to the characteristic subgroup $N$ given in Theorem \ref{Embedding}. The assumption of the lemma can be confirmed as follows:

\noindent{\bf Proof of the triviality of the centralizer}
\begin{proof}
Let $g\in Z_{A_\Gamma}(N)$. For all $x\in A_\Gamma$, $x^{[A_\Gamma:N]}\in N$. Thus 
$$(gxg^{-1})^{[A_\Gamma:N]}=gx^{[A_\Gamma:N]}g^{-1}=x^{[A_\Gamma:N]}.
$$
From the unique root property of $A_\Gamma$, we have $gxg^{-1}=x$, which yields $x\in Z(A_\Gamma)=\{e\}$.
\end{proof}

\subsubsection{Construction of a splitting}
Let $U\subset\mathbb{R}^{|\text{V}(\Gamma)|}$ be the universal abelian covering of $S_\Gamma$:
\begin{displaymath}
U=\bigcup_{C\in \text{Clique}(\Gamma)}\{(x_v)_{v\in\text{V}(\Gamma)}\mid x_w\in\mathbb{Z},\ \text{for all } w\notin C\}.
\end{displaymath}
Since $p\colon\hat{X}\rightarrow S_\Gamma$ is an abelian covering, $U$ lies over $\hat{X}$: $\hat{X}\cong U/\text{Deck}(U,\hat{X})$. 
We construct the splitting as follows:
\begin{enumerate}
\item Lift each element $\phi$ in the set of the Laurence-Servatius generators to the {\it standard lift} $\tilde{\phi}\colon U\rightarrow U$, which fixes the origin.
\item {\it Shift} the standard lift $\tilde{\phi}$ in the direction of $x_v$ by length $s_{v,A}$ (respectively, $s_{[v]}$) when $\phi$ is a partial conjugation $\gamma_{v,A}$ (respectively, when $\phi = \iota_v$). We do not shift other Laurence-Servatius generators.
\item Verify that the lifts satisfy Day's presentation (Lemma \ref{Day}).
\item Confirm that each lift of each inner automorphism is homotopic to an element in $\text{Deck}(U,\hat{X})$.
\end{enumerate}
As $S_\Gamma$ is a $K(A_\Gamma,1)$ complex, we can naturally identify $\text{Aut}(A_\Gamma)$ with $\text{HE}_{\bullet}(S_\Gamma)$. The lift $\hat{\phi}$ of $\phi\in\text{Aut}(A_\Gamma)$ sends a lattice point $\bm{x}\in\mathbb{Z}^{|\text{V}(\Gamma)|}$ to $\bm{\Phi}\bm{x}+\bm{s}$. Here, $\bm{\Phi}\in\text{GL}(|\text{V}(\Gamma)|,\mathbb{Z})$ is the matrix corresponding to the action of $\phi$ on $\text{H}_1({A_\Gamma})$ and $\bm{s}$ is the shift of $\hat{\phi}$ from the standard lift. What we only have to do is to confirm that the affine transformations corresponding to the Laurence-Servatius generators satisfy the presentation.
\subsubsection{Relations are satisfied}
Notice that, as $\bm{\Phi}$ is the matrix induced from the action of $\phi\in\text{Aut}(A_\Gamma)$ on $A_\Gamma^\text{ab}$, linear parts of transformations appearing in both side of the relations automatically coincide. 

Before beginning the computation, we offer some notations and formulas which connect the Laurence-Servatius generators with Whitehead automorphisms:
\begin{enumerate}
\item The set of type (1) Whitehead automorphisms coincides with the subgroup of $\text{Aut}(A_\Gamma)$ generated by inversions and graph symmetries, which is isomorphic to the semi-direct product $\mathbb{Z}_2^{|\text{V}(\Gamma)|}\rtimes\text{Aut}(\Gamma)$. ($\text{Aut}(\Gamma)<S_{|\text{V}(\Gamma)|}$ acts on $\mathbb{Z}_2^{|\text{V}(\Gamma)|}$ by basis permutations.)
\item $(\{v,w\},v)=\rho_{v,w}$.
\item $(\{v,w^{-1}\},v)=\lambda_{v,w}^{-1}$.
\item $(\{v^{-1},w\},v^{-1})=\rho_{v,w}^{-1}$.
\item $(\{v^{-1},w^{-1}\},v^{-1})=\lambda_{v,w}$.
\item $(\{v\}\cup A\cup A^{-1},v)=\gamma_{v,A}^{-1}$.
\item $(\{v^{-1}\}\cup A\cup A^{-1},v)=\gamma_{v,A}$.
\item $(L-v^{-1},v)=\gamma_v^{-1}=\prod_{A\in\text{CC}(v)}\gamma_{v,A}^{-1}$.
\item $(L-v,v^{-1})=\gamma_v=\prod_{A\in\text{CC}(v)}\gamma_{v,A}$.
\item $\rho_{v,w}$ denotes an automorphism called a {\it right transvection} which sends $w$ to $wv$. $\rho_{v,w}=\lambda_{v,w}$ if $w$ is adjacent to $v$ and $\rho_{v,w}=\lambda_{v,w}\gamma_{v,\{w\}}^{-1}$ otherwise. (Notice that $\{w\}\in\text{CC}(v)$ if $v\geq w$ and $v$ is not adjacent to $w$.)
\item We define $S_v$ to be $\sum_{A\in\text{CC}(v)}s_{v,A}$, the length of the shift of $\gamma_v$.
\item $\bm{I}$ denotes the identity matrix acting trivially on $\mathbb{R}^{|\text{V}(\Gamma)|}$.
\item $\bm{E}_{vw}$ denotes the matrix which sends $\bm{x}_u$ to $\delta_{uw}\bm{x}_v$, where $\bm{x}_v$ denotes the basis vector of $\mathbb{R}^{|\text{V}(\Gamma)|}$ corresponding to $v\in\text{V}(\Gamma)$.
\end{enumerate}
\noindent{\bf The second relation} Since $A$ and $B$ are unions of elements in $\{\{b\}\subset V^{\pm}\mid \bar{b}\leq \bar a\}\cup \text{CC}(\bar{a})$, it is enough to show that, for each $v\in V$, elements in $\{\lambda_{v,w}\mid w\leq v\}\cup\{\gamma_{v,A}\mid A\in\text{CC}(v)\}$ mutually commute. This can be confirmed as follows:
\begin{align*}
\left(\bm{I}+\bm{E}_{vw}\right)\left(\bm{I}+\bm{E}_{vw'}\right)\bm{x}&=\left(\bm{I}+\bm{E}_{vw'}\right)\left(\bm{I}+\bm{E}_{vw}\right)\bm{x}\\
\left(\bm{I}+\bm{E}_{vw}\right)(\bm{I}\bm{x}+s_{v,A}\bm{x}_v)&=\bm{I}((\bm{I}+\bm{E}_{vw})\bm{x})+{s}_{v,A}\bm{x}_v\\
\bm{I}(\bm{I}\bm{x}+s_{v,A}\bm{x}_v)+s_{v,A'}\bm{x}_v&=\bm{I}(\bm{I}\bm{x}+s_{v,A'}\bm{x}_v)+s_{v,A}\bm{x}_v
\end{align*}
The first equality shows $\lambda_{v,w}\lambda_{v,w'}=\lambda_{v,w'}\lambda_{v,w}$, the second shows $\lambda_{v,w}\gamma_{v,A}=\gamma_{v,A}\lambda_{v,w}$ and the third shows $\gamma_{v,A}\gamma_{v,A'}=\gamma_{v,A'}\gamma_{v,A}$.

\noindent{\bf The first relation} This is clear from the above formulas and the second relation.

\noindent{\bf The third relation} It is enough to show that one element in $\{\lambda_{v,v'}|v'\leq v\}\cup\{\gamma_{v,A}|A\in\text{CC}(v)\}$ and another element in $\{\lambda_{w,w'}|w'\leq w\}\cup\{\gamma_{w,B}|B\in\text{CC}(w)\}$ commute. This is confirmed as follows:
\begin{align*}
(\bm{I}+\bm{E}_{vv'})(\bm{I}+\bm{E}_{ww'})\bm{x}&=(\bm{I}+\bm{E}_{ww'})(\bm{I}+\bm{E}_{vv'})\bm{x}, \\
(\bm{I}+\bm{E}_{vv'})(\bm{I}\bm{x}+s_{w,B}\bm{x}_{w})&=\bm{I}(\bm{I}+\bm{E}_{vv'})\bm{x}+s_{w,B}\bm{x}_{w}, \\
\bm{I}(\bm{I}\bm{x}+s_{w,B}\bm{x}_{w})+s_{v,A}\bm{x}_{v}&=\bm{I}(\bm{I}\bm{x}+s_{v,A}\bm{x}_{v})+s_{w,B}\bm{x}_{w}.
\end{align*} 

\noindent{\bf The fourth relation} From the second relation, we have $(A,a)=(A-b^{-1},a)(\{a,b^{-1}\},a)=(\{a,b^{-1}\},a)(A-b^{-1},a)$.
Thus, using the third relation the left hand side of the relation becomes:
\begin{align*}
[(A,a),(B,b)]&=(A,a)(B,b)(A,a)^{-1}(B,b)^{-1}\\
&=(\{a,b^{-1}\},a)(A-b^{-1},a)(B,b)(A-b^{-1},a)^{-1}(\{a,b^{-1}\},a)^{-1}(B,b)^{-1}\\
&=[(\{a,b^{-1}\},a),(B,b)].
\end{align*}
Hence it is enough to show that, for $\bar{b}\leq\bar{a}$, $[(\{a,b^{-1}\},a),(B,b)]=(B-b+a,a)^{-1}.$ In the words of the Laurence-Servatius generators, it suffices to show that, for $u\leq v\leq w$ and for $w\notin A\in\text{CC}(v)$, $[\lambda_{w,v}^{-1},\gamma_{v,A}]=\left(\prod_{B\subseteq A}\gamma_{w,B}\right)^{-1}$ and $[\lambda_{w,v}^{-1},\lambda_{v,u}]=\lambda_{w,u}^{-1}$ hold:
\begin{align*}
(\bm{I}+\bm{E}_{wv})^{-1}((\bm{I}+\bm{E}_{wv})(\bm{x}-s_{v,A}\bm{x}_v)+s_{v,A}\bm{x}_v)&=\bm{x}-\sum_{B\subseteq A}s_{w,B}\bm{x}_w,\\
(\bm{I}+\bm{E}_{wv})^{-1}(\bm{I}+\bm{E}_{vu})(\bm{I}+\bm{E}_{wv})(\bm{I}+\bm{E}_{vu})^{-1}\bm{x}&=(\bm{I}+\bm{E}_{wu})^{-1}\bm{x}.
\end{align*}
The second condition of Theorem \ref{Embedding} guarantees that these equalities hold.

\noindent{\bf The fifth relation} Set $A=C\sqcup\{a,b\}$. From the second relation, we have:
\begin{align*}
(A-a+a^{-1},b)&=(C+b,b)(\{b,a^{-1}\},b),\\
(A,a)&=(C+a,a)(\{a,b\},a),\\
(A-b+b^{-1},a)&=(C+a,a)(\{a,b^{-1}\},a).
\end{align*}
From the fourth relation, $[(\{b,a^{-1}\},b),(C+a,a)]=(C+b,b)^{-1}$. Hence the left hand side of the relation is:
\begin{align*}
(A-a+a^{-1},b)(A,a)&=(C+b,b)(\{b,a^{-1}\},b)(C+a,a)(\{a,b\},a) \\
&=(C+b,b)(C+b,b)^{-1}(C+a,a)(\{b,a^{-1}\},b)(\{a,b\},a) \\
&=(C+a,a)(\{b,a^{-1}\},b)(\{a,b\},a).
\end{align*}
On the other hand, the right hand side of the relation is as follows:
\[
(A-b+b^{-1},a)\sigma_{a,b}=(C+a,a)(\{a,b^{-1}\},a)\sigma_{a,b}.
\]
Thus it is enough to show that for $\bar{a}\sim\bar{b}$,
\[
(\{a^{-1},b\},b)(\{a,b\},a)=(\{a,b^{-1}\},a)\sigma_{a,b}.
\]
In the words of the Laurence-Servatius generators, this becomes: $\lambda_{w,v}^{-1}\rho_{v,w}=\lambda_{v,w}^{-1}\iota_w\tau_{vw}$, $\lambda_{w,v}\lambda_{v,w}^{-1}=\rho_{v,w}\tau_{vw}\iota_w$, $\rho_{w,v}\rho_{v,w}^{-1}=\lambda_{v,w}\iota_v\tau_{vw}$ and $\rho_{w,v}^{-1}\lambda_{v,w}=\rho_{v,w}^{-1}\tau_{vw}\iota_v.$ Here $\tau_{vw}$ denotes the transposition of $v$ and $w$ and $\bm{T}_{vw}$ denotes the corresponding matrix: $\bm{T}_{vw}=\bm{I}-\bm{E}_{vv}-\bm{E}_{ww}+\bm{E}_{vw}+\bm{E}_{wv}$. Note that the first equation is equivalent to the second, and the third is equivalent to the fourth. Hence we only have to confirm that the second and the fourth equations hold. Moreover, we assume that $v$ and $w$ are not adjacent since, if they are adjacent, there are no shifts and hence the claim is trivial. Now the two equations are confirmed as follows:
\begin{align*}
(\bm{I}+\bm{E}_{wv})(\bm{I}+\bm{E}_{vw})^{-1}\bm{x}&=\bm{I}(\bm{I}+\bm{E}_{vw})\bm{T}_{vw}((\bm{I}-2\bm{E}_{ww})\bm{x}+s_{[v]}\bm{x}_w)-s_{[v]}\bm{x}_v,\\
\bm{I}(\bm{I}+\bm{E}_{wv})^{-1}(\bm{I}+\bm{E}_{vw})\bm{x}+s_{[v]}\bm{x}_w&=\bm{I}(\bm{I}+\bm{E}_{vw})^{-1}\bm{T}_{vw}((\bm{I}-2\bm{E}_{vv})\bm{x}+s_{[v]}\bm{x}_v)+s_{[v]}\bm{x}_v.
\end{align*}

\noindent{\bf The sixth relation} It is enough to show that $\iota_v\gamma_{v,A}\iota_v=\gamma_{v,A}^{-1}$, $\iota_v\lambda_{v,w}\iota_v=\lambda_{v,w}^{-1}$, $\iota_w\lambda_{v,w}\iota_w=\rho_{v,w}^{-1}$, $\sigma\gamma_{v,A}\sigma^{-1}=\gamma_{\sigma(v),\sigma(A)}$ and $\sigma\lambda_{v,w}\sigma^{-1}=\lambda_{\sigma(v),\sigma(w)}$, where $\sigma$ is an automorphism of $\Gamma$. 
The third condition of Theorem \ref{Embedding} guarantees that the fourth and fifth equations hold. Other equations are confirmed as follows:
\begin{align*}
(\bm{I}-2\bm{E}_{vv})(\bm{I}((\bm{I}-2\bm{E}_{vv})\bm{x}+s_{[v]}\bm{x}_v)+s_{v,A}\bm{x}_v)+s_{[v]}\bm{x}_v&=\bm{I}\bm{x}-s_{v,A}\bm{x}_v, \\
(\bm{I}-2\bm{E}_{vv})(\bm{I}+\bm{E}_{vw})((\bm{I}-2\bm{E}_{vv})\bm{x}+s_{[v]}\bm{x}_v)+s_{[v]}\bm{x}_v&=(\bm{I}+\bm{E}_{vw})^{-1}\bm{x}, \\
(\bm{I}-2\bm{E}_{ww})(\bm{I}+\bm{E}_{vw})((\bm{I}-2\bm{E}_{ww})\bm{x}+s_{[w]}\bm{x}_w)+s_{[w]}\bm{x}_w&=(\bm{I}+\bm{E}_{vw})^{-1}\bm{x}+s_{[w]}\bm{x}_v.
\end{align*}

\noindent{\bf The seventh relation} It is enough to confirm the semi-direct product structure is preserved. This is confirmed as follows:
\[
\bm{\sigma}((\bm{I}-2\bm{E}_{vv})\bm{\sigma}^{-1}\bm{x}+s_{[v]}\bm{x}_v)=(\bm{I}-2\bm{E}_{\sigma(v)\sigma(v)})\bm{x}+s_{[\sigma(v)]}\bm{x}_{\sigma(v)}.
\]
This equation shows $\bar{\sigma}\iota_v\bar{\sigma}^{-1}=\iota_{\sigma(v)}$, where $\sigma\in\text{Aut}(\Gamma)$. ($\bm{\sigma}$ and $\bar{\sigma}$ denote the permutation matrix and the automorphism of $A_\Gamma$ corresponding to $\sigma$.)

\noindent{\bf The eighth relation} Note that this relation automatically holds as a consequence of the first and second relations.

\noindent{\bf The ninth relation} It is enough to show that, for each $v\in V$, $\gamma_v$ commutes with $\lambda_{w,u}$ and $\gamma_{w,A}$, where $v\neq u$ and $v\notin A$. This can be confirmed as follows:
\begin{align*}
(\bm{I}+\bm{E}_{wv})(\bm{I}\bm{x}+S_v\bm{x}_v)&=\bm{I}(\bm{I}+\bm{E}_{wv})\bm{x}+S_v\bm{x}_v,\\
\bm{I}(\bm{I}\bm{x}+S_v\bm{x}_v)+s_{w,A}\bm{x}_w&=\bm{I}(\bm{I}\bm{x}+s_{w,A}\bm{x}_w)+S_v\bm{x}_v.
\end{align*}

\noindent{\bf The tenth relation}
 Set $A=C\sqcup\{a,b\}$. From the second relation, we have $(A,a)=(C+a,a)(\{a,b\},a)$. From the ninth relation, $(C+a,a)$ commutes with $(L-b^{-1},b)$. Hence, it is enough to show that, for $\bar{b}\leq\bar{a}$,
\[
[(\{a,b\},a),(L-b^{-1},b)]=(L-a^{-1},a).
\]
In the words of the Laurence-Servatius generators, this becomes:
$
[\rho_{v,w},\gamma_w^{-1}]=\gamma_v^{-1}, 
[\lambda_{v,w}^{-1},\gamma_w]=\gamma_v^{-1}, 
[\rho_{v,w}^{-1},\gamma_w^{-1}]=\gamma_v, 
[\lambda_{v,w},\gamma_w]=\gamma_v.
$ These are confirmed as follows:
\begin{align*}
(\bm{I}+\bm{E}_{vw})(\bm{I}((\bm{I}-\bm{E}_{vw})(\bm{I}\bm{x}+S_w\bm{x}_w)+s_{v,\{w\}}\bm{x}_v)-S_w\bm{x}_w)-s_{v,\{w\}}\bm{x}_v&=\bm{I}\bm{x}-S_v\bm{x}_v,\\
(\bm{I}-\bm{E}_{vw})(\bm{I}(\bm{I}+\bm{E}_{vw})(\bm{I}\bm{x}-S_w\bm{x}_w)+S_w\bm{x}_w)&=\bm{I}\bm{x}-S_v\bm{x}_v,\\
(\bm{I}-\bm{E}_{vw})(\bm{I}((\bm{I}+\bm{E}_{vw})(\bm{I}\bm{x}+S_w\bm{x}_w)-s_{v,\{w\}}\bm{x}_v)-S_w\bm{x}_w)+s_{v,\{w\}}\bm{x}_v&=\bm{I}\bm{x}+S_v\bm{x}_v,\\
(\bm{I}+\bm{E}_{vw})(\bm{I}(\bm{I}-\bm{E}_{vw})(\bm{I}\bm{x}-S_w\bm{x}_w)+S_w\bm{x}_w)&=\bm{I}\bm{x}+S_v\bm{x}_v.
\end{align*}
Notice that, the first condition of Theorem \ref{Embedding} guarantees that $S_v=S_w$.

\subsubsection{Inner automorphisms are killed}
Since $\text{Deck}(U,\hat{X})=N/[A_\Gamma,A_\Gamma]>K/[A_\Gamma,A_\Gamma]=\prod_{v\in\text{V}(\Gamma)}r_v\mathbb{Z}\subset\mathbb{Z}^{|\text{V}(\Gamma)|}=A_\Gamma/[A_\Gamma,A_\Gamma]$, it is enough to show that, for every vertex $v$, the conjugation $\gamma_v$ by $v$ lifts to an element homotopic to an element in $\prod_{v\in\text{V}(\Gamma)}r_v\mathbb{Z}$. Notice that the shift of $\gamma_v$ is of length $\sum_{A\in\text{CC}(v)}s_{v,A}$ in the direction of $x_v$, since $\gamma_v=\prod_{A\in\text{CC}(v)}\gamma_{v,A}$.
Now, the fifth condition in Theorem \ref{Embedding} guarantees that $\gamma_v$ is killed since $\hat{\gamma_v}$ is homotopic to the translation of length $\sum_{A\in\text{CC}(v)}s_{v,A}+1$ in the direction of $x_v$, which is an element of $\prod_{v\in\text{V}(\Gamma)}r_v\mathbb{Z}$. Now the proof of Theorem \ref{Embedding} is completed.\qed

%\begin{figure}[htbp]
%\begin{minipage}{0.5\hsize}
%\centering
%5\includegraphics[width=7.0cm,bb=0 0 435 388]{conjugation.png}
%\caption{The lift $\hat{\gamma_v}$ of $\gamma_{v}\colon w\mapsto vwv^{-1}$}
%\end{minipage}
%\begin{minipage}{0.5\hsize}
%\centering
%\includegraphics[width=7.0cm, bb= 0 0 435 388]{translation.png}
%\caption{Translation of length three in the direction of $x_v$}
%\end{minipage}
%\end{figure}
%Figure 1 and 2 show a example, the case $r_v=3$ and $\sum_{A\in\text{CC}(v)}s_{v,A}=2$.
\subsection{Corollaries}
For a group $G$ and a positive integer $r$, let $P_r(G)$ be the normal closure of $[G,G]\cup\{g^r\mid g\in G\}$. 
Note that $P_r(G)$ is the kernel of the surjective homomorphism onto $G^{\text{ab}}\otimes\mathbb{Z}_r$, and hence, when $G^{\text{ab}}$ is finitely-generated, $P_r(G)$ is a finite-index characteristic subgroup of $G$.
\begin{cor}\label{FF}
Define $m$ to be the number of connected components of $\Gamma$. $\text{Out}(A_\Gamma)$ can be embedded into $\text{Out}(P_r(A_\Gamma))$ if $r$ is coprime to $m-1$.
\end{cor}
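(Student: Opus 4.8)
The plan is to apply Theorem \ref{Embedding} with the uniform choice $r_v=r$ for every $v\in\text{V}(\Gamma)$. With this choice the kernel $K$ of $A_\Gamma\twoheadrightarrow\prod_v\mathbb{Z}_{r_v}=A_\Gamma^{\text{ab}}\otimes\mathbb{Z}_r$ is exactly $P_r(A_\Gamma)$, so I take $N=K=P_r(A_\Gamma)$, which by the preceding remark is the required finite-index characteristic subgroup containing $K$. The centerlessness hypothesis of Theorem \ref{Embedding} is automatic in the only nontrivial range: if $m\geq2$ then $\Gamma$ is disconnected, no vertex satisfies $\text{st}(v)=\Gamma$, and hence $A_\Gamma$ has trivial center; if $m=1$ then $m-1=0$ forces $r=1$, whence $P_r(A_\Gamma)=A_\Gamma$ and the embedding is the identity. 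Thus everything reduces to producing integers $s_{v,A}$ ($v\in\text{V}(\Gamma)$, $A\in\text{CC}(v)$) satisfying conditions (1)--(5), now all with the single modulus $r$.

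The key observation driving the construction is a description of $\text{CC}(v)$ in terms of the global components of $\Gamma$. Writing $\Gamma_{(v)}$ for the connected component of $\Gamma$ containing $v$, each of the $m-1$ components $\Gamma_j\neq\Gamma_{(v)}$ is disjoint from $\text{st}(v)$ and therefore survives as a single element of $\text{CC}(v)$, while the remaining elements of $\text{CC}(v)$ are the pieces into which $\Gamma_{(v)}-\text{st}(v)$ is cut; no piece of the latter kind can coincide with a whole component of $\Gamma$. I then set
\[
s_{v,A}=\begin{cases} t & \text{if } A \text{ is a connected component of } \Gamma, \\ 0 & \text{otherwise,}\end{cases}
\]
where $t$ is any integer with $(m-1)t\equiv-1\pmod{r}$. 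Such a $t$ exists precisely because $r$ is coprime to $m-1$, and this is the sole place the hypothesis is used.

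With this choice $S_v=\sum_{A\in\text{CC}(v)}s_{v,A}=(m-1)t$ is independent of $v$, so condition (1) holds trivially and condition (5) is exactly $(m-1)t\equiv-1\pmod{r}$. Condition (3) is immediate because any $\sigma\in\text{Aut}(\Gamma)$ carries whole components of $\Gamma$ to whole components, and condition (4) holds because a singleton $\{v\}$ lying in some $\text{CC}(w)$ is a whole component of $\Gamma$ exactly when $v$ is isolated (so $s_{w,\{v\}}$ takes the same value $t$ regardless of $w$, with no incident edge to trigger the vanishing clause), whereas a singleton arising from cutting $\Gamma_{(w)}$ receives $0$; in either case $s_{w,\{v\}}$ is independent of $w$.

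The one verification requiring an actual argument is condition (2), and this is where I expect the main (though modest) difficulty to lie. Given $v\leq w$ and $A\in\text{CC}(v)$ with $w\notin A$, I distinguish whether $A$ is a whole component of $\Gamma$ or an interior piece of $\Gamma_{(v)}-\text{st}(v)$. In the first case $A$ remains a single element of $\text{CC}(w)$, so both sides of (2) equal $t$; in the second case $\text{lk}(v)\neq\emptyset$ forces $w\in\Gamma_{(v)}$, and every $B\in\text{CC}(w)$ with $B\subseteq A$ is again an interior piece (being contained in $\Gamma_{(v)}=\Gamma_{(w)}$), so both sides equal $0$. The crux is thus the elementary but essential fact that domination $v\leq w$ keeps $v$ and $w$ in the same component of $\Gamma$ unless $v$ is isolated, together with the observation that whole components are never split. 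Once condition (2) is checked, Theorem \ref{Embedding} delivers the embedding $\text{Out}(A_\Gamma)\hookrightarrow\text{Out}(P_r(A_\Gamma))$.
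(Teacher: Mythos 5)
Your proposal is correct and is essentially the paper's own proof: the paper likewise applies Theorem \ref{Embedding} with $r_v=r$ and $s_{v,A}=s$ when $A$ is a whole connected component of $\Gamma$ (and $0$ otherwise), where $s(m-1)\equiv-1\pmod{r}$. In fact your write-up is more complete than the paper's, which simply asserts that the five conditions hold, whereas you verify them (in particular condition (2) via the observation that a whole component is never split and that domination keeps $v,w$ in the same component unless $v$ is isolated) and you also check the centerlessness hypothesis and the degenerate case $m=1$, which the paper leaves implicit.
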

\begin{proof}
Let $s$ be an integer such that $s(m-1)\equiv-1\pmod{r}$, whose existence is guaranteed by the assumption that $r$ is coprime to $m-1$.
To show Corollary \ref{FF}, apply Theorem \ref{Embedding} to the following settings:
\begin{itemize}
\item $r_v=r$,
\item $s_{v,A}=s$, if $A$, a connected component of $\Gamma-\text{st}(v)$, is also a connected component of $\Gamma$.
\item $s_{v,A}=0$, otherwise. 
\end{itemize}
These satisfy the conditions in Theorem \ref{Embedding}.
\end{proof}
Notice that this corollary is a natural generalization of Theorem \ref{embBV}: by using the Euler characteristic argument, we can see $P_r(F_n)$ is isomorphic to $F_{r^n(n-1)+1}$.

Let $\phi_i\colon G_i\rightarrow A_i$ ($i\in\{1,2\}$) be a homomorphism onto a finite abelian group $A_i$. Define $N_i$ to be the kernel of this map.
\begin{lem}\label{FP}
The kernel of $\phi_1\times\phi_2\colon G_1\times G_2\rightarrow A_1\times A_2$ (respectively, $\phi_1*\phi_2\colon G_1*G_2\rightarrow A_1\times A_2$) is isomorphic to $N_1\times N_2$ (respectively, $N_1^{*|A_2|}*N_2^{*|A_1|}*F_{(|A_1|-1)(|A_2|-1)}$). 
\end{lem}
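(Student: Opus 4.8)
The first isomorphism is immediate and requires no machinery: an element $(g_1,g_2)\in G_1\times G_2$ lies in $\ker(\phi_1\times\phi_2)$ precisely when $\phi_1(g_1)=e$ and $\phi_2(g_2)=e$, i.e.\ when $g_i\in N_i$ for both $i$, so $\ker(\phi_1\times\phi_2)=N_1\times N_2$ directly.

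For the free product I would argue topologically. Choose aspherical spaces $X_i=K(G_i,1)$ each with a single vertex as basepoint, so that $X=X_1\vee X_2$ is a $K(G_1*G_2,1)$. The homomorphism $\phi_1*\phi_2\colon G_1*G_2\to A_1\times A_2$, defined on the factors by $g_1\mapsto(\phi_1(g_1),e)$ and $g_2\mapsto(e,\phi_2(g_2))$ and well-defined because $A_1\times A_2$ is abelian, corresponds to a regular covering $p\colon\hat{X}\to X$ with deck group $A:=A_1\times A_2$, hence $|A_1|\cdot|A_2|$ sheets, and with $\pi_1(\hat{X})\cong N:=\ker(\phi_1*\phi_2)$, the group to be identified. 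Since $N$ is a subgroup, $\hat{X}$ is connected, a point I would record at the outset because it is needed later.

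The plan is then to read off the structure of $\hat{X}$ as a graph of spaces. Loops in $X_1$ translate only the $A_1$-coordinate of the fiber $A=A_1\times A_2$, so $p^{-1}(X_1)$ splits into $|A_2|$ connected pieces, one for each value of the $A_2$-coordinate, and each is the connected covering of $X_1$ associated with $N_1=\ker\phi_1$, hence a $K(N_1,1)$. Symmetrically $p^{-1}(X_2)$ is a disjoint union of $|A_1|$ copies of $K(N_2,1)$. These pieces are glued along the $|A|=|A_1|\cdot|A_2|$ lifts of the wedge point, which are points, so the edge spaces of the resulting decomposition are contractible and the associated edge groups are trivial. Recording which pieces meet at each lifted vertex $(a_1,a_2)$ shows that the $X_1$-piece indexed by $a_2$ and the $X_2$-piece indexed by $a_1$ share exactly one vertex, and every such pair occurs exactly once; thus the underlying incidence graph is the complete bipartite graph $K_{|A_2|,|A_1|}$.

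Applying the fundamental theorem of Bass--Serre theory to this graph of groups, whose vertex groups are $|A_2|$ copies of $N_1$ and $|A_1|$ copies of $N_2$ with all edge groups trivial, yields $N\cong N_1^{*|A_2|}*N_2^{*|A_1|}*F_r$, where $r$ is the first Betti number of the underlying graph. The one place where care is needed—the main (and only real) obstacle—is correctly identifying this underlying graph as $K_{|A_2|,|A_1|}$; once that is done, the computation $r=|E|-|V|+1=|A_1||A_2|-(|A_1|+|A_2|)+1=(|A_1|-1)(|A_2|-1)$ gives the claimed free factor $F_{(|A_1|-1)(|A_2|-1)}$.
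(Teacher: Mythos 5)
Your proposal is correct and is essentially the paper's own argument in topological dress: the paper forms the Bass--Serre tree $\tilde{T}$ of the one-edge splitting of $G_1*G_2$ and counts the quotient graph $\tilde{T}/N$ ($|A_2|$ vertices with group $N_1$, $|A_1|$ vertices with group $N_2$, $|A_1||A_2|$ trivial-edge-group edges, hence Betti number $(|A_1|-1)(|A_2|-1)$), which is exactly the underlying graph of your graph-of-spaces decomposition of the cover $\hat{X}\to X_1\vee X_2$. Your monodromy/orbit computation and the identification of the graph as $K_{|A_2|,|A_1|}$ just make explicit the counts the paper asserts, and both proofs conclude with the same structure theorem for graphs of groups with trivial edge groups.
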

\begin{proof}
The first half of the claim is obvious, hence we only consider the case $\phi_1*\phi_2\colon G_1*G_2\rightarrow A_1\times A_2$. Let $\tilde{T}$ be the Bass-Serre covering tree of $G_1*G_2$. $\tilde{T}/G_1*G_2$ has two vertices labeled by $G_1$ and $G_2$ and there exists one edge labeled by the trivial group connecting them. We consider the graph $\tilde{T}/N$, where $N$ denotes the kernel of the map $\phi_1*\phi_2$. $\tilde{T}/N$ has $|A_1||A_2|$ edges, where $|A_1||A_2|$ is the index of $N$ in $G_1*G_2$. As for vertices, $\tilde{T}/N$ has $|A_2|$ vertices labeled by $N_1$ and $|A_1|$ vertices labeled by $N_2$. Hence, the fundamental group of $\tilde{T}/N$ as a topological space is the free group of rank $|A_1||A_2|-|A_1|-|A_2|+1=(|A_1|-1)(|A_2|-1)$. The triviality of all the edge stabilizers completes the proof.
\end{proof}
We can characterize the shape of $\Gamma$ such that the underlying graph of the subgroup $K$ appeared in Theorem \ref{Embedding} can be inductively computed by using Lemma \ref{FP}:
\begin{lem}\label{noP3}
Let  $\mathcal{P}$ ($\ni\mathbb{Z}$) be the smallest class of groups closed under taking free and direct products. Then $\mathcal{P}$ coincides with the family of RAAGs whose underlying graphs do not have $P_4$ as their full subgraphs. Here $P_4$ denotes the path on four vertices.
\end{lem}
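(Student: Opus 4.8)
The plan is to translate the entire statement into graph theory and to reduce the lemma to a classical structure theorem about $P_4$-free graphs. By the isomorphisms $A_{\Gamma*\Lambda}\cong A_\Gamma\times A_\Lambda$ and $A_{\Gamma\sqcup\Lambda}\cong A_\Gamma*A_\Lambda$ recorded among the examples of RAAGs, together with $\mathbb{Z}\cong A_{K_1}$, the class $\mathcal{P}$ is exactly $\{A_\Gamma\mid\Gamma\in\mathcal{G}\}$, where $\mathcal{G}$ is the smallest class of finite simple graphs containing the one-vertex graph and closed under disjoint union $\sqcup$ (which realizes the free product) and simplicial join $*$ (which realizes the direct product). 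Thus the lemma is equivalent to the purely combinatorial assertion that $\mathcal{G}$ is precisely the class of graphs having no induced (equivalently, full) $P_4$; these are the classical \emph{cographs}. I would prove the two inclusions separately, and I do not need any uniqueness statement for the graph of a RAAG.

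For the inclusion $\mathcal{G}\subseteq\{P_4\text{-free}\}$ I would induct on the construction of $\Gamma\in\mathcal{G}$, the one-vertex graph being trivially $P_4$-free. The inductive step exploits that $P_4$ is connected and that $\overline{P_4}\cong P_4$. If $\Gamma=\Gamma_1\sqcup\Gamma_2$ with both factors $P_4$-free, then any induced $P_4$, being connected, lies entirely in one $\Gamma_i$ and contradicts the hypothesis. If $\Gamma=\Gamma_1*\Gamma_2$, then every non-edge of $\Gamma$ has both endpoints in a single factor; since the non-edges of $P_4$ themselves form a connected graph (as $\overline{P_4}\cong P_4$), all four vertices of any induced $P_4$ are forced into one factor $\Gamma_i$, and because $\Gamma_i$ is a full subgraph of $\Gamma$ this again contradicts $P_4$-freeness of $\Gamma_i$. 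Hence both operations preserve $P_4$-freeness.

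For the reverse inclusion I would induct on $|\text{V}(\Gamma)|$, the base case being $\mathbb{Z}=A_{K_1}\in\mathcal{P}$. The key input is Seinsche's structure theorem: every $P_4$-free graph on at least two vertices is disconnected or has disconnected complement. Granting this, a $P_4$-free $\Gamma$ with at least two vertices splits either as $\Gamma_1\sqcup\Gamma_2$ (when $\Gamma$ is disconnected) or as $\Gamma_1*\Gamma_2$ (when $\bar\Gamma$ is disconnected). In either case each piece is a full subgraph of $\Gamma$, hence $P_4$-free with strictly fewer vertices, so $A_{\Gamma_i}\in\mathcal{P}$ by induction; closure of $\mathcal{P}$ under the corresponding product yields $A_\Gamma\in\mathcal{P}$.

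The main obstacle is the structure theorem itself, which I would prove by induction on $n=|\text{V}(\Gamma)|$ (the case $n=2$ being immediate). Assume for contradiction that $\Gamma$ and $\bar\Gamma$ are both connected, pick a vertex $v$, and apply the inductive hypothesis to $\Gamma-v$; since $\overline{\Gamma-v}=\bar\Gamma-v$ and $\overline{P_4}\cong P_4$, after possibly replacing $\Gamma$ by $\bar\Gamma$ I may assume $\Gamma-v$ is disconnected, with components $C_1,\dots,C_k$ where $k\ge 2$. Connectedness of $\Gamma$ forces $v$ to have a neighbour in each $C_i$. The crux is the sub-claim that $v$ cannot have both a neighbour and a non-neighbour in one component: choosing such a pair at minimal distance reduces to an adjacent pair $x\sim y$ with $v\sim x$ and $v\not\sim y$, and then, taking any neighbour $c$ of $v$ in a different component, the vertices $c,v,x,y$ induce a $P_4$. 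Therefore $v$ is adjacent to all of every $C_i$, i.e.\ $v$ is universal in $\Gamma$, hence isolated in $\bar\Gamma$; this contradicts connectedness of $\bar\Gamma$ and proves that $\Gamma$ or $\bar\Gamma$ is disconnected, finishing the argument.
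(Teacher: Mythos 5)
Your proposal is correct and takes essentially the same route as the paper: both reduce the lemma to the combinatorial fact that $P_4$-free graphs are exactly those built from a single vertex by disjoint unions and simplicial joins, and both prove this by induction on $|\text{V}(\Gamma)|$, using the self-complementarity of $P_4$ to assume $\Gamma$ and $\bar{\Gamma}$ are both connected and then producing an induced $P_4$ from a vertex of $\Gamma$ that fails to be adjacent to all of $\Gamma-\{v\}$ (your edge-pair $c$--$v$--$x$--$y$ argument is the same step the paper phrases as a distance-three/geodesic argument). The only differences are organizational: you isolate Seinsche's structure theorem as a stand-alone step and write out the ``clear'' inclusion $\mathcal{G}\subseteq\{P_4\text{-free graphs}\}$, which the paper leaves to the reader.
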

\begin{proof}
It is clear that $\mathcal{P}$ is a subclass of the class of RAAGs, hence we show that a graph $\Gamma$ having no $P_4$ is made from a single vertex by taking simplicial joins and disjoint unions finite times. (The opposite direction is clear.) The proof proceeds by induction on the number of vertices and we may assume that $\Gamma$ and its complement graph $\bar{\Gamma}$ are connected. This is because $\Gamma=A*B$ if and only if $\bar{\Gamma}=\bar{A}\sqcup\bar{B}$ and $\Gamma$ has no $P_4$ if and only if $\bar{\Gamma}$ has no $P_4$. When $|\text{V}(\Gamma)|=1$, the claim is trivial. Now assume $|\text{V}(\Gamma)|>1$ and let $v$ be a vertex of $\Gamma$, then from the hypothesis of the assumption, $\Gamma-\{v\}$ decomposes as a disjoint union or as a simplicial join. 

\noindent{\bf Case 1 ($\Gamma-\{v\}$ decomposes as a disjoint union: $\Gamma-\{v\}=A_1\sqcup A_2$)} It is enough to show that $\Gamma={v}*(\Gamma-\{v\})$. Assume on the contrary that there exists a vertex $w\in A_i$ not adjacent to $v$. Then since $\Gamma$ is connected, there exists $w'\in A_i$ such that $d(v,w')=2$. On the other hand, there exists a vertex $w'' \in A_j $($j$ is different from $i$) adjacent to $v$. We have $d(w',w'')=3$ hence $\Gamma$ has $P_4$, which is a contradiction.

\noindent{\bf Case 2 ($\Gamma-\{v\}$ decomposes as a simplicial join: $\Gamma-\{v\}=A_1* A_2$)} Consider the complement graph of $\Gamma-\{v\}$, which is a disjoint union of $\bar{A_1}$ and $\bar{A_2}$; this is the case proved just before.
\end{proof}
Graphs not having $P_4$ are known as {\it cographs} or as {\it $P_4$-free graphs} and are classically well known objects in the field of graph theory. The lemma above indicates that outer automorphism groups of RAAGs associated to $P_4$-free graphs can be embedded in outer automorphism groups of RAAGs, whose defining graphs can be computed inductively. 
\begin{cor}\label{FPA}
Let $r_i\in\mathbb{Z}_{>0}$ and $e_i\in\mathbb{Z}_{\geq0}$ be given for each positive integer $i$. Set $R:=\prod_i r_i^{ie_i}$ and $E:=\sum_i e_i$ and assume that $E<\infty$ and $r_1$ is divisible by $r_i$ for every $i$ such that $e_i>0$. Then
\begin{displaymath}
\text{Out}\left(\mathop{*}_{i\geq1}(\mathbb{Z}^i)^{*e_i}\right)\hookrightarrow\text{Out}\left(F_{(E-1)R-\sum_{i}\frac{r_iR}{e_i^i}+1}*\mathop{*}_{i\geq1}(\mathbb{Z}^i)^{*\frac{e_iR}{r_i^i}}\right),
\end{displaymath}
if $(E-1,r)=1$ in the case $e_1>0$ and if $(e_1,e_2,\cdots,e_i-1,\cdots,r_i)=1$ for every $i$ such that $e_i>0$ in the case $e_1=0$.
\end{cor}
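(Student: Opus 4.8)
The plan is to apply Theorem~\ref{Embedding} to the RAAG $A_\Gamma=\mathop{*}_{i\ge1}(\mathbb{Z}^i)^{*e_i}$, whose defining graph $\Gamma$ is the disjoint union of $e_i$ copies of the clique $K_i$ for each $i$ (a $P_4$-free graph, so covered by the discussion around Lemma~\ref{noP3}). First I would record the combinatorics of $\Gamma$. For $v$ in a clique $C$ of type $i$ we have $\text{st}(v)=C$, so $\text{CC}(v)$ consists exactly of the cliques other than $C$; moreover vertex domination is rigid: a vertex of a clique of size $\ge2$ is dominated only by the vertices of its own clique, whereas an isolated vertex (a $\mathbb{Z}$-factor, type $1$) satisfies $v\le w$ for \emph{every} vertex $w$. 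I would set $r_v:=r_i$ for every $v$ in a type-$i$ clique and let $K=\ker(A_\Gamma\twoheadrightarrow\prod_v\mathbb{Z}_{r_v})$ be as in Theorem~\ref{Embedding}, taking $N=K$. The hypothesis that $r_1$ is divisible by every $r_i$ is precisely what makes $K$ characteristic: on $H_1(A_\Gamma)=\mathbb{Z}^{\text{V}(\Gamma)}$ the sublattice $\bigoplus_v r_v\mathbb{Z}$ must be preserved by every Laurence--Servatius generator, and the only nontrivial check is a transvection $\lambda_{v,w}$ with $w$ an isolated vertex dominated by a vertex $v$ of a larger clique, where preservation of the sublattice requires $r_v\mid r_w$, i.e.\ $r_i\mid r_1$.

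Next I would identify the isomorphism type of $N=K$ by the Bass--Serre computation used to prove Lemma~\ref{FP}, applied to the free product $A_\Gamma=\mathop{*}_j G_j$ (with $G_j\cong\mathbb{Z}^{i(j)}$) and the map onto a finite abelian group of order $R=\prod_i r_i^{\,ie_i}$. Passing to the quotient of the Bass--Serre tree by $K$, each vertex lying over a type-$i$ factor carries the group $\ker(\mathbb{Z}^i\to\mathbb{Z}_{r_i}^i)\cong\mathbb{Z}^i$, and there are $R/r_i^i$ such vertices per factor, while the central vertices and all edges carry trivial groups. Counting gives $ER$ edges and $R+\sum_i e_iR/r_i^i$ vertices in a connected quotient graph, so the first Betti number is $(E-1)R-\sum_i e_iR/r_i^i+1$ and $N\cong F_{(E-1)R-\sum_i e_iR/r_i^i+1}*\mathop{*}_{i\ge1}(\mathbb{Z}^i)^{*e_iR/r_i^i}$. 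This is the target group (the exponent $\tfrac{r_iR}{e_i^i}$ displayed in the statement should read $\tfrac{e_iR}{r_i^i}$, matching the free-product exponent). Hence $\text{Out}(N)$ is the claimed group, and it remains only to verify the five conditions of Theorem~\ref{Embedding}.

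For the conditions I would look for shifts of the form $s_{v,A}=t_{i,j}$, depending only on the type $i$ of the clique containing $v$ and the type $j$ of the component $A$; by transitivity of $\text{Aut}(\Gamma)$ on pairs of distinct cliques of given types, this is the general shape allowed by condition~(3). Since each $A\in\text{CC}(v)$ is a single clique that remains a single component of $\Gamma-\text{st}(w)$ whenever $w\notin A$, condition~(2) collapses to the tautology $s_{v,A}=s_{w,A}$, and condition~(4) is vacuous because isolated vertices have no incident edges. Writing $S_i:=\sum_j e_j t_{i,j}-t_{i,i}$ for the value of $\sum_{A\in\text{CC}(v)}s_{v,A}$ at a type-$i$ vertex, conditions~(1) and~(5) become: all $S_i$ linked by a domination must agree, and $S_i\equiv-1\pmod{r_i}$ for every $i$ with $e_i>0$.

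The heart of the proof, and the source of the dichotomy in the statement, is the interaction of this arithmetic with the domination structure, so I expect this to be the main obstacle. If $e_1>0$ there is an isolated vertex $v$ with $v\le w$ for every vertex $w$; condition~(2) applied to these pairs forces $t_{i,j}$ to be independent of $i$, and condition~(1) then forces all the sums to coincide, so every $t_{i,j}$ equals one common integer $s$ and the requirement degenerates to $(E-1)s\equiv-1\pmod{r_i}$ for all $i$ with $e_i>0$. Since $r_i\mid r_1$, this reduces to a single congruence modulo $r_1$, solvable exactly when $\gcd(E-1,r_1)=1$, which is the stated condition $(E-1,r)=1$. If $e_1=0$ there are no universal dominators, condition~(1) becomes vacuous, and the row variables $t_{i,\cdot}$ for distinct $i$ are unconstrained by one another; the $i$-th congruence $\sum_{j\ne i}e_j t_{i,j}+(e_i-1)t_{i,i}\equiv-1\pmod{r_i}$ is solvable in the row variables precisely when $\gcd(\{e_j\}_{j\ne i},\,e_i-1,\,r_i)=1$, which is the stated condition. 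Finally I would note that $A_\Gamma$ is centerless whenever $E\ge2$, so that Theorem~\ref{Embedding} indeed applies and yields the desired embedding $\text{Out}(A_\Gamma)\hookrightarrow\text{Out}(N)$.
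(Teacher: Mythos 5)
Your proposal is correct and takes essentially the same route as the paper: both apply Theorem \ref{Embedding} to the disjoint-union-of-cliques graph with shifts $s_{v,A}$ depending only on the types $(i,j)$, identify $K$ via the Bass--Serre computation of Lemma \ref{FP}, and reduce conditions (1)--(5) to exactly the stated gcd conditions (the paper disposes of the case $e_1>0$ by citing Corollary \ref{FF}, which is precisely the constant-shift solution you derive directly, and its coprimality condition $(E-1,r_1)=1$ confirms your reading of the undefined ``$r$'' as $r_1$). You also correctly flag the typo in the statement: the exponent $\frac{r_iR}{e_i^i}$ should read $\frac{e_iR}{r_i^i}$.
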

\begin{proof}
Set $\mathop{*}_{i}^{*}(\mathbb{Z}^i)^{*e_i}=A_\Gamma$. Here, $\Gamma$ is a disjoint union of $E$ cliques.
It is not hard to see that the kernel $L=\text{Ker}(\mathop{*}_{i}^{*}(\mathbb{Z}^i)^{*e_i}\twoheadrightarrow\prod_{i}(\mathbb{Z}_{r_i}^i)^{e_i})$ is a characteristic subgroup of $A_\Gamma$ if and only if $e_1=0$ or $r_i$ divides $r_1$ for every $i$. (A subgroup $H$ of a group $G$ is characteristic in $G$ if and only if $\phi^\text{ab}(\pi^\text{ab}(H))=\pi^\text{ab}(H)$ for all $\phi\in\text{Aut}(G)$, where $\pi^\text{ab}\colon G\rightarrow G^\text{ab}$ denotes the abelianization map of $G$ and $\phi^\text{ab}$ denotes the automorphism of $G^\text{ab}$ induced by $\phi$. We handle the case where $G$ is a RAAG and, as we have seen in Section 2, we know that the Laurence-Servatius generators generate $\text{Aut}(G)$.) We can compute the form of $L$ using Lemma \ref{FP}:
\[
L\cong F_{(E-1)R-\sum_{i}\frac{r_iR}{e_i^i}+1}*\mathop{*}_{i\geq1}(\mathbb{Z}^i)^{*\frac{e_iR}{r_i^i}}.
\]
Since $K\supset P_{r_1}(A_\Gamma)$, the case $e_1>0$ is just a consequence of Corollary \ref{FF}. As for the case $e_1=0$, we use Theorem \ref{Embedding}. Since there are no non-adjacent domination relations, the first, second and fourth conditions in Theorem \ref{Embedding} are automatically satisfied. From the third condition, it is required that $s_{v,A}$ only depends on the rank $j$ of the maximal free abelian group that $v$ belongs to, and the rank $k$ of $A_A$. In this notation of $j$ and $k$, denote $s_{v,A}=s_{j,k}$. Now, from the fifth condition of Theorem \ref{Embedding}, it is enough to show there exist integers $\{s_{j,k}\}$ satisfying, for each $i$,
$$
\sum_{j}e_js_{j,k}-e_i\equiv-1\pmod{r_i},
$$
which holds if the assumption of Corollary \ref{FPA} is satisfied.
\end{proof}

We can similarly construct embeddings between outer automorphism groups of direct products of (non-abelian) free groups.
\begin{cor}\label{DPF}
Let $r_i\in\mathbb{Z}_{>0}$ and $e_i\in\mathbb{Z}_{\geq0}$ be given for each integer $i\geq2$. Assume that $\sum_i e_i<\infty$. Then
\begin{displaymath}
\text{Out}\left(\prod_{i\geq2}F_i^{e_i}\right)\hookrightarrow\text{Out}\left(\prod_{i\geq2}F_{r_i^i(i-1)+1}^{e_i}\right),
\end{displaymath}
if $(i-1,r_i)=1$ for every $i$ such that $e_i>0$.
\end{cor}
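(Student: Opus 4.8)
The plan is to realize the source group as $A_\Gamma$ for a suitable graph $\Gamma$ and apply Theorem \ref{Embedding}, following the proof of Corollary \ref{FPA} almost verbatim; indeed, for the relevant factors the graph here is the complement of the one used there. Using $A_{\bar{K_i}}\cong F_i$ and $A_{\Gamma*\Lambda}\cong A_\Gamma\times A_\Lambda$, I would write $\prod_{i\geq2}F_i^{e_i}=A_\Gamma$ with $\Gamma=\mathop{*}_{i\geq2}(\bar{K_i})^{*e_i}$, the simplicial join of $e_i$ copies of the null graph $\bar{K_i}$. The first step is to record the combinatorics of $\Gamma$: for a vertex $v$ in a copy $C\cong\bar{K_i}$ one has $\text{lk}(v)=\text{V}(\Gamma)-\text{V}(C)$, so the domination class $[v]$ equals $\text{V}(C)$, there is no domination between distinct copies (this is exactly where $i\geq2$ is used), and $\text{CC}(v)$ consists of the $i-1$ singletons $\{w\}$ with $w\in\text{V}(C)-\{v\}$. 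Since each vertex fails to be adjacent to the $i-1\geq1$ other vertices of its copy, no vertex is adjacent to all others, and hence $A_\Gamma$ is centerless, as Theorem \ref{Embedding} demands.

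Next I would set $r_v=r_i$ for $v$ in a rank-$i$ copy and let $K$ be the kernel of $A_\Gamma\twoheadrightarrow\prod_v\mathbb{Z}_{r_v}$. As in Corollary \ref{FPA}, verifying that $K$ is characteristic reduces to checking that the sublattice $\prod_v r_v\mathbb{Z}$ is preserved by the action on $A_\Gamma^{\text{ab}}$ of the Laurence-Servatius generators: partial conjugations act trivially there, transvections occur only within a single copy (on which all $r_v$ coincide, so the induced $\text{GL}(i,\mathbb{Z})$-action preserves $r_i\mathbb{Z}^i$), and graph symmetries permute only copies of equal rank, hence of equal $r_i$. I would then put $s_{v,A}=s_i$ for every $A\in\text{CC}(v)$ with $v$ in a rank-$i$ copy, where $s_i$ is chosen to solve $(i-1)s_i\equiv-1\pmod{r_i}$; such $s_i$ exists precisely because $(i-1,r_i)=1$ for every $i$ with $e_i>0$. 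With these choices Conditions 1--3 hold because every quantity depends only on the rank $i$ and automorphisms preserve rank, Condition 4 holds because two vertices of one copy are never adjacent (so its vanishing clause is vacuous and $s_{w,\{v\}}=s_i$ does not depend on $w$), and Condition 5 is literally $(i-1)s_i\equiv-1\pmod{r_i}$. Theorem \ref{Embedding} then gives $\text{Out}(A_\Gamma)\hookrightarrow\text{Out}(K)$.

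It remains to identify $K$. Since $A_\Gamma$ is the internal direct product of its copies and the homomorphism respects this product, $K$ splits as the direct product over the copies of $\text{Ker}(F_i\twoheadrightarrow\mathbb{Z}_{r_i}^i)=P_{r_i}(F_i)$. The Euler-characteristic computation already invoked in Corollary \ref{FF} gives $P_{r_i}(F_i)\cong F_{r_i^i(i-1)+1}$, whence $K\cong\prod_{i\geq2}F_{r_i^i(i-1)+1}^{e_i}$ and the stated embedding follows. I expect the only genuinely delicate points to be the characteristic property of $K$ and the bookkeeping that collapses the five conditions of Theorem \ref{Embedding} into the single congruence; the hypothesis $i\geq2$ is exactly what removes the dominated single-vertex factors responsible for the extra divisibility assumptions needed in Corollary \ref{FPA}.
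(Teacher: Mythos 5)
Your proof is correct and is exactly the argument the paper intends: Corollary \ref{DPF} is stated there without proof as a ``similar'' adaptation of Corollary \ref{FPA}, and your setup (the join of null graphs, the observation that domination and $\text{CC}(v)$ stay within a single copy so that conditions 1--4 of Theorem \ref{Embedding} collapse, the choice $s_{v,A}=s_i$ with $(i-1)s_i\equiv-1\pmod{r_i}$, and the identification of the kernel via the direct-product half of Lemma \ref{FP} together with $P_{r_i}(F_i)\cong F_{r_i^i(i-1)+1}$) supplies precisely the details that adaptation requires. The only cosmetic omission is that your characteristic-subgroup check lists three of the four Laurence--Servatius generator types; inversions act by $\pm1$ on the coordinates of $A_\Gamma^{\text{ab}}$ and trivially preserve $\prod_v r_v\mathbb{Z}$, so nothing is lost.
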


We end this section with several examples.
\begin{exa}
\begin{align*}
\text{Out}(\mathbb{Z}^2*\mathbb{Z})&\hookrightarrow\text{Out}((\mathbb{Z}^2)^{*2}*F_7)\\
\text{Out}(\mathbb{Z}^2*\mathbb{Z})&\hookrightarrow\text{Out}((\mathbb{Z}^2)^{*3}*F_{25})\\
\text{Out}(\mathbb{Z}^2*\mathbb{Z}^2)&\hookrightarrow\text{Out}((\mathbb{Z}^2)^{*8}*F_9)\\
\text{Out}(\mathbb{Z}^2*F_2)&\hookrightarrow\text{Out}((\mathbb{Z}^2)^{*9}*F_{154})\\
\text{Out}(F_2\times F_2)&\hookrightarrow\text{Out}(F_5\times F_5)\\
\text{Out}(F_2\times F_2)&\hookrightarrow\text{Out}(F_{10}\times F_{10})\\
\text{Out}(F_2\times F_3)&\hookrightarrow\text{Out}(F_{10}\times F_{55})
\end{align*}
\end{exa}

\section{Non-embeddability conditions}
Finite subgroups in a group $G$ often become obstructions to embed the group $G$ into another group. Hence in order to establish the non-embeddability between groups, it is important to investigate finite subgroups of those groups. For example, by investigating Boolean subgroups in outer automorphism groups of RAAGs, Kielak \cite{K} lately showed that $\text{Out}(A_\Gamma)$ can not be embedded into $\text{Out}(A_\Lambda)$ if $|\text{V}(\Gamma)|>|\text{V}(\Lambda)|$:
\begin{theo}\cite[Theorem 4.1]{K}\label{Kielak}
There are no injective homomorphisms $\text{Out}(A_\Gamma)\rightarrow\text{Out}(A_{\Gamma'})$, where $\Gamma'$ has fewer vertices than $\Gamma$.
\end{theo}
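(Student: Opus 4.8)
The plan is to extract from $\text{Out}(A_\Gamma)$ a numerical invariant that is monotone under embeddings and that recovers the vertex count. Following the phrase ``Boolean subgroups'' in the passage above, the natural candidate is the largest rank $b(G)$ of an elementary abelian $2$-subgroup $(\mathbb{Z}_2)^k\le G$. Since an injective homomorphism restricts to an isomorphism onto its image, it sends a Boolean subgroup of rank $k$ to one of the same rank; hence $\text{Out}(A_\Gamma)\hookrightarrow\text{Out}(A_{\Gamma'})$ forces $b(\text{Out}(A_\Gamma))\le b(\text{Out}(A_{\Gamma'}))$. It therefore suffices to prove $b(\text{Out}(A_\Gamma))=|\text{V}(\Gamma)|$ for every $\Gamma$; the theorem is then the contrapositive, since $|\text{V}(\Gamma')|<|\text{V}(\Gamma)|$ would violate this monotonicity.

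First I would establish the lower bound $b\ge n$, where $n=|\text{V}(\Gamma)|$. The inversions $\{\iota_v\}_{v\in\text{V}(\Gamma)}$ pairwise commute and are involutions, so they generate a copy of $\mathbb{Z}_2^{\,n}$ inside $\text{Aut}(A_\Gamma)$. To see that this subgroup survives in $\text{Out}(A_\Gamma)$, note that a nontrivial product $\prod_{v\in S}\iota_v$ acts on $H_1(A_\Gamma;\mathbb{Z})\cong\mathbb{Z}^{n}$ as the diagonal matrix with entry $-1$ on $S$ and $+1$ elsewhere, whereas every inner automorphism acts trivially on $H_1$. Hence no nontrivial product of inversions is inner, so the composite $\mathbb{Z}_2^{\,n}\hookrightarrow\text{Aut}(A_\Gamma)\twoheadrightarrow\text{Out}(A_\Gamma)$ is injective and $b\ge n$.

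The upper bound $b\le n$ is where the content lies. Consider the homology representation $\rho\colon\text{Out}(A_\Gamma)\to\text{GL}(n,\mathbb{Z})$ induced by the action on $H_1(A_\Gamma;\mathbb{Z})$. For any Boolean subgroup $B\le\text{Out}(A_\Gamma)$, the image $\rho(B)$ consists of pairwise commuting involutions of $\text{GL}(n,\mathbb{Q})$; such matrices are simultaneously diagonalizable with eigenvalues $\pm1$, so $\rho(B)$ embeds into the diagonal subgroup $(\mathbb{Z}_2)^{n}$ and has rank $\le n$. Since subgroups and quotients of an elementary abelian $2$-group are again elementary abelian, the exact sequence $1\to B\cap\ker\rho\to B\to\rho(B)\to1$ gives, over $\mathbb{F}_2$, the identity $\text{rank}(B)=\text{rank}(B\cap\ker\rho)+\text{rank}(\rho(B))$. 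Thus it remains to show $B\cap\ker\rho=\{e\}$, i.e. that the RAAG Torelli group $\mathcal{T}_\Gamma:=\ker\rho$ contains no element of order $2$.

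The torsion-freeness (or at least $2$-torsion-freeness) of $\mathcal{T}_\Gamma$ is therefore the heart of the matter, and the step I expect to be hardest. I would approach it by a Minkowski/Serre-type congruence argument: refine $\rho$ to the mod-$m$ representation $\rho_m\colon\text{Out}(A_\Gamma)\to\text{GL}(n,\mathbb{Z}/m)$ on $H_1(A_\Gamma;\mathbb{Z}/m)$ and show that for $m\ge3$ the kernel of $\rho_m$ is torsion-free, whence $\ker\rho\subseteq\ker\rho_m$ is torsion-free as well. This reduces the problem to proving that a finite-order outer automorphism of $A_\Gamma$ acting trivially on $H_1(A_\Gamma;\mathbb{Z}/m)$ must itself be trivial. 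To obtain that, I would realize a finite subgroup of $\text{Out}(A_\Gamma)$ geometrically — as a group of symmetries of the Salvetti complex $S_\Gamma$, or of a suitable blow-up carrying a fixed point — and run a fixed-point/linearization argument showing that triviality on mod-$m$ homology forces triviality of the symmetry. The delicate points, which are exactly what the Boolean-subgroup machinery must control, are the existence of such a realization for $\text{Out}$ rather than $\text{Aut}$, and handling the combinatorial complications coming from domination and adjacency among the Laurence--Servatius generators.
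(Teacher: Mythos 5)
Your overall strategy is exactly the one this paper uses: the invariant you call $b(G)$ is the $\mathbb{Z}_2$-rank $\text{rank}_{\mathbb{Z}_2}(G)$, and the theorem follows from monotonicity once one knows $\text{rank}_{\mathbb{Z}_2}(\text{Out}(A_\Gamma))=|\text{V}(\Gamma)|$ (this is the $p=2$ case of Corollary \ref{finsubcor}, as the paper remarks). Your lower bound via the inversions is correct and is a cleaner packaging of what the paper gets from Lemma \ref{below} together with Lemma \ref{autoutp}; your upper bound correctly pushes a Boolean subgroup through the homology representation $\rho\colon\text{Out}(A_\Gamma)\to\text{GL}(n,\mathbb{Z})$, uses simultaneous diagonalizability of commuting rational involutions, and reduces everything to the single claim that $\ker\rho$ has no $2$-torsion.

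That last claim is where your proposal has a genuine gap. You propose to prove torsion-freeness of $\ker\rho$ by passing to the mod-$m$ representation and then realizing a finite subgroup of $\text{Out}(A_\Gamma)$ as a group of symmetries of the Salvetti complex or a blow-up with a fixed point. No such realization theorem is available for general RAAGs: Nielsen realization for $\text{Out}(A_\Gamma)$ is a hard problem (unlike the free-group case, where Culler's theorem applies), and the combinatorial obstructions you flag (domination, adjacency) are precisely why the graph-of-groups proof does not transfer. The paper instead imports the needed fact as a black box: Toinet \cite{T} proves that the kernel of $\text{Out}(A_\Gamma)\to\text{GL}(|\text{V}(\Gamma)|,\mathbb{Z})$ is torsion-free, via conjugacy $p$-separability of RAAGs rather than via any geometric realization (see the proof of Lemma \ref{above}). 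So your reduction is sound and your argument becomes a complete proof once you replace the realization sketch with a citation of Toinet's theorem; as written, the hardest step rests on a method that is not known to work.
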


In this section, we investigate finite subgroups in pure (outer) automorphism groups of RAAGs and give necessary conditions of embeddings.
\subsection{Pure automorphism groups}
We begin from the definition of the pure (outer) automorphism groups of RAAGs. Recall that the automorphism group $\text{Aut}(A_\Gamma)$ is generated by graph symmetries, inversions, transvections and partial conjugations. The {\it pure automorphism group} $\text{Aut}^0(A_\Gamma)$ is the subgroup of $\text{Aut}(A_\Gamma)$ generated by inversions, transvections and partial conjugations. Notice that the inner automorphism group $\text{Inn}(A_\Gamma)$ is contained in $\text{Aut}^0(A_\Gamma)$ since every inner automorphism is a product of several number of partial conjugations. We define the {\it pure outer automorphism group} $\text{Out}^0(A_\Gamma)$ to be the quotient group $\text{Aut}^0(A_\Gamma)/\text{Inn}(A_\Gamma)$. Note that the pure (outer) automorphism group of $A_\Gamma$ coincides with the (outer) automorphism group of $A_\Gamma$ when $\Gamma$ is complete or null.

\subsection{Structure of automorphism groups of simple graphs}
Recall that $\text{V}(\Gamma)$ is equipped with an equivalence relation $\sim$: $v\sim w$ if and only if $\text{lk}(v)\subset\text{st}(w)$ and $\text{st}(v)\supset\text{lk}(w)$.
\begin{lem}\label{wd}
Every $[v]\in\text{V}(\Gamma)/\sim$ spans either a complete subgraph or a null subgraph.
\end{lem}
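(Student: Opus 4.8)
The plan is to first record what the relation $v\sim w$ says concretely at the level of links and stars, and then run an ``all-or-nothing'' argument inside a single class. Throughout I would treat $\text{lk}$ and $\text{st}$ as vertex sets, since the spanned full subgraphs are determined by their vertices.

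First I would unpack the definition. For equivalent vertices $v\sim w$ we have $\text{lk}(v)\subseteq\text{st}(w)$ and $\text{lk}(w)\subseteq\text{st}(v)$, and I expect the behaviour to split according to adjacency. If $v$ and $w$ are \emph{not} adjacent, then $w\notin\text{lk}(v)$, so every neighbour of $v$ lies in $\text{st}(w)\setminus\{w\}=\text{lk}(w)$; combining with the symmetric inclusion gives $\text{lk}(v)=\text{lk}(w)$. If instead $v$ and $w$ \emph{are} adjacent, then $w\in\text{lk}(v)$ and $v\in\text{lk}(w)$, so $\text{st}(v)=\text{lk}(v)\cup\{v\}\subseteq\text{st}(w)$ and symmetrically, giving $\text{st}(v)=\text{st}(w)$. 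Thus for equivalent vertices: being non-adjacent forces equal links, while being adjacent forces equal stars.

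Next I would argue that adjacency is uniform on $[v]$. Suppose not: then $[v]$ contains an adjacent pair $\{a,b\}$ and a non-adjacent pair $\{c,d\}$, and by testing $a$ against $c$ (if they are adjacent take $x=c,y=a,z=d$; if not take $x=a,y=b,z=c$) I obtain three vertices $x,y,z\in[v]$ with $x$ adjacent to $y$ but $x$ not adjacent to $z$. The key step is to turn this configuration into a contradiction: from $x\sim z$ non-adjacent I get $\text{lk}(x)=\text{lk}(z)$, and since $y\in\text{lk}(x)=\text{lk}(z)$ the vertices $y$ and $z$ are adjacent, whence $\text{st}(y)=\text{st}(z)$; combining with $\text{st}(x)=\text{st}(y)$ from the adjacent pair $x\sim y$ yields $\text{st}(x)=\text{st}(z)$. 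But then $x\in\text{st}(x)=\text{st}(z)=\text{lk}(z)\cup\{z\}$ with $x\neq z$ forces $x\in\text{lk}(z)$, i.e.\ $x$ adjacent to $z$, contradicting our choice. Hence $[v]$ spans either a complete or a null subgraph.

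I expect the only delicate point to be the asymmetry between the two cases of the first step: the extra vertex $\{v\}$ (resp.\ $\{w\}$) separating $\text{lk}$ from $\text{st}$ is exactly what drives the final contradiction, so I would keep careful track of when a containment $\text{lk}\subseteq\text{st}$ does or does not pick up that extra vertex. The reduction in the second step that produces the common-vertex triple $x,y,z$ is the other place to be careful; everything else is a routine verification.
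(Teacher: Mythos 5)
Your proof is correct and takes essentially the same approach as the paper: assume some class contains both an edge and a non-edge, pass to a triple $x,y,z\in[v]$ with $x$ adjacent to $y$ but not to $z$, and contradict vertex domination. The paper's contradiction is a single line --- from $y\sim z$ one gets $x\in\text{lk}(y)\subseteq\text{st}(z)$, while $x\notin\text{st}(z)$ since $x\neq z$ and $x$ is not adjacent to $z$ --- so your longer detour through equal links and equal stars, while valid, is not needed.
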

\begin{proof}
Assume on the contrary that there exist vertices $v_1$, $v_2$ and $v_3$ in $[v]$ such that $\{v_1,v_2\}\in\text{E}(\Gamma)$ and $\{v_1,v_3\}\notin\text{E}(\Gamma)$. Then $v_1\in\text{lk}(v_2)$ but $v_1\notin\text{st}(v_3)$. This contradicts the assumption that $v_2\sim v_3$.
\end{proof}
\begin{lem}\label{wd2}
Let $[v]$ and $[w]$ be elements in $\text{V}(\Gamma)$. Then we have,
$$
\{v,w\}\in\text{E}(\Gamma)\Leftrightarrow\{v',w'\}\in\text{E}(\Gamma),\ \text{for all } v,v'\in[v]\text{ and for all } w,w'\in[w].
$$
\end{lem}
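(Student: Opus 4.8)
The plan is to reduce the two-variable replacement of representatives to a one-variable \emph{replacement principle} and then apply it twice. The principle I would isolate is: if $v\sim v'$ and $u$ is a vertex with $u\notin[v]$, then $\{u,v\}\in\text{E}(\Gamma)$ if and only if $\{u,v'\}\in\text{E}(\Gamma)$. Granting this, the full biconditional follows by moving one endpoint at a time. When $[v]\ne[w]$, I first replace $v$ by $v'$, which is legitimate since $w\notin[v]$, obtaining $\{v,w\}\in\text{E}(\Gamma)\Leftrightarrow\{v',w\}\in\text{E}(\Gamma)$; then I replace $w$ by $w'$, using that $v'\notin[w]$, to get $\{v',w\}\in\text{E}(\Gamma)\Leftrightarrow\{v',w'\}\in\text{E}(\Gamma)$. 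Chaining the two equivalences yields $\{v,w\}\in\text{E}(\Gamma)\Leftrightarrow\{v',w'\}\in\text{E}(\Gamma)$.

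To prove the replacement principle, suppose $\{u,v\}\in\text{E}(\Gamma)$, i.e.\ $u\in\text{lk}(v)$. Since $v\sim v'$ gives in particular $v\leq v'$, we have $\text{lk}(v)\subseteq\text{st}(v')$, so $u\in\text{st}(v')$. By the definition of the star, $\text{st}(v')$ consists of $v'$ together with its neighbours, so either $u=v'$ or $u\in\text{lk}(v')$. The hypothesis $u\notin[v]$ excludes $u=v'$, since $v'\in[v]$, and therefore $u\in\text{lk}(v')$, i.e.\ $\{u,v'\}\in\text{E}(\Gamma)$. The reverse implication is identical after exchanging $v$ and $v'$, which is permitted because $\sim$ is symmetric and $u\notin[v]=[v']$. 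This is the only delicate point: it is essential that $u$ lie outside the class of the moved vertex, for otherwise $u\in\text{st}(v')$ would merely allow $u=v'$ and the conclusion would fail.

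Finally I would treat the degenerate situation $[v]=[w]$, which the one-at-a-time scheme does not cover directly, since there the fixed endpoint can share a class with the endpoint being moved. Here I invoke Lemma \ref{wd}: the full subgraph spanned by $[v]$ is either complete or null, so any two distinct vertices of that class are uniformly adjacent or uniformly non-adjacent, which already gives the desired biconditional for distinct representatives. I do not expect a genuine obstacle; the work is entirely in the bookkeeping of class membership, ensuring that at each single replacement the stationary endpoint genuinely lies outside the class of the endpoint being moved, so that membership in a star is upgraded to membership in the corresponding link.
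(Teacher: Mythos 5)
Your proof is correct and follows essentially the same route as the paper: the case $[v]=[w]$ is delegated to Lemma \ref{wd}, and for $[v]\neq[w]$ one endpoint is replaced at a time using $\text{lk}(v)\subseteq\text{st}(v')$ together with the fact that the stationary vertex lies outside the moved vertex's class, so membership in the star upgrades to membership in the link. The only difference is presentational: you isolate the single-endpoint replacement as an explicit principle and spell out the reverse implication, whereas the paper chains the two replacements directly and leaves the symmetry implicit.
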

\begin{proof}
The case $[v]=[w]$ is Lemma \ref{wd}, hence we assume $[v]\neq[w]$. 
Since $w\in\text{lk}(v)\subset\text{st}(v')$, we have $\{v',w\}\in\text{E}(\Gamma)$. Hence $v'\in\text{lk}(w)\subset\text{st}(w')$.
\end{proof}
From Lemma \ref{wd2}, the quotient $\bar{\Gamma}$ of the original graph $\Gamma$ by the equivalence relation $\sim$ on $\text{V}(\Gamma)$ is well-defined as a simple graph: $\text{V}(\bar{\Gamma})=\text{V}(\Gamma)/\sim$ and $\{[v],[w]\}\in\text{E}(\bar{\Gamma})$ if and only if $\{v,w\}\in\text{E}(\Gamma)$ and $[v]\neq[w]$. We can view $\bar{\Gamma}$ as a vertex-labeled graph: $[v]$ is labeled by the RAAG whose underlying graph is the subgraph of $\Gamma$ spanned by $[v]$. From Lemma \ref{wd}, labels are free groups or free abelian groups.
\begin{lem}
$$
\text{Aut}(\Gamma)=\left(\prod_{[v]\in\text{V}(\Gamma)/\sim}S_{|[v]|}\right)\rtimes\text{Aut}^{\text{lab}}(\bar{\Gamma}),
$$
where $\text{Aut}^{\text{lab}}(\bar{\Gamma})$ denotes the group of automorphisms of $\bar{\Gamma}$ which preserve the labels of vertices.
\end{lem}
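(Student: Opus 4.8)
The plan is to exhibit the claimed decomposition as a split short exact sequence
\[
1\longrightarrow\prod_{[v]\in\text{V}(\Gamma)/\sim}S_{|[v]|}\longrightarrow\text{Aut}(\Gamma)\xrightarrow{\ \pi\ }\text{Aut}^{\text{lab}}(\bar{\Gamma})\longrightarrow1.
\]
First I would construct $\pi$. A graph automorphism $\sigma$ preserves adjacency, hence $\sigma(\text{lk}(v))=\text{lk}(\sigma(v))$ and $\sigma(\text{st}(w))=\text{st}(\sigma(w))$; consequently it preserves the domination preorder $\leq$ and the equivalence $\sim$, so $v\sim w$ if and only if $\sigma(v)\sim\sigma(w)$. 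Thus $\sigma$ descends to an automorphism $\bar{\sigma}$ of $\bar{\Gamma}$. Since $\sigma$ restricts to a graph isomorphism from the subgraph spanned by $[v]$ onto the one spanned by $[\sigma(v)]$, Lemma \ref{wd} guarantees that it sends complete classes to complete classes and null classes to null classes while preserving cardinalities; hence $\bar{\sigma}$ fixes every label and lies in $\text{Aut}^{\text{lab}}(\bar{\Gamma})$. Setting $\pi(\sigma)=\bar{\sigma}$ gives a homomorphism.

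Next I would identify the kernel. An element of $\ker\pi$ fixes each class $[v]$ setwise, so it is a product of permutations of the individual classes. Conversely, Lemma \ref{wd} shows that any permutation inside a single class preserves the (complete or null) subgraph it spans, and Lemma \ref{wd2} shows that all vertices of a class share the same adjacency to the rest of $\Gamma$, so such a permutation preserves every remaining edge. Therefore each within-class permutation, chosen independently, extends to a graph automorphism, and $\ker\pi=\prod_{[v]}\text{Sym}([v])\cong\prod_{[v]}S_{|[v]|}$.

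Then I would establish surjectivity and a splitting at once. Grouping the classes by label, I fix once and for all an enumeration $\beta_{[v]}\colon[v]\xrightarrow{\ \sim\ }\{1,\dots,|[v]|\}$ of each class. Given $\tau\in\text{Aut}^{\text{lab}}(\bar{\Gamma})$, each $[v]$ is sent to a class $\tau([v])$ of equal cardinality and equal type, so I may define $s(\tau)$ by $s(\tau)|_{[v]}=\beta_{\tau([v])}^{-1}\circ\beta_{[v]}$, i.e.\ $s(\tau)$ sends the $i$-th vertex of $[v]$ to the $i$-th vertex of $\tau([v])$. That $s(\tau)$ is a graph automorphism follows from the argument of the previous paragraph together with Lemma \ref{wd2}, and $\pi(s(\tau))=\tau$ by construction, which yields surjectivity. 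The fixed enumerations make $s$ multiplicative: both $s(\tau_2)\circ s(\tau_1)$ and $s(\tau_2\tau_1)$ send the $i$-th vertex of $[v]$ to the $i$-th vertex of $(\tau_2\tau_1)([v])$, so $s$ is a section homomorphism. As $\ker\pi$ is normal and $s$ splits $\pi$, I conclude $\text{Aut}(\Gamma)=\ker\pi\rtimes s(\text{Aut}^{\text{lab}}(\bar{\Gamma}))$, the asserted semidirect product.

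I expect the main obstacle to be the construction and multiplicativity of the section $s$: lifting a single $\tau$ is immediate from equality of cardinalities, but ensuring that the lifts compose exactly (rather than merely modulo $\ker\pi$) forces the choice of fixed enumerations and the consistency check above. Everything else is driven by the rigidity supplied by Lemmas \ref{wd} and \ref{wd2}.
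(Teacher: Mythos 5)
Your proposal is correct and follows essentially the same route as the paper: the paper's subgroup $Q$ of automorphisms ``permuting equivalence classes but fixing orders in all classes'' is exactly the image of your section $s$ built from fixed enumerations, and its verification that $S\triangleleft\text{Aut}(\Gamma)$, $S\cap Q=\{e\}$ and $SQ=\text{Aut}(\Gamma)$ is the same content as your split short exact sequence. You are somewhat more explicit about why $\pi$ lands in the label-preserving automorphisms and why $s$ is multiplicative, but no new idea is involved.
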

\begin{proof}
Note that every graph automorphism sends each equivalence class to one equivalence class, that is, $\sigma([v])=[\sigma(v)]$. The set of automorphisms which fix every equivalence class forms a group $S$. $S$ is isomorphic to $\prod_{[v]\in\text{V}(\Gamma)/\sim}S_{|[v]|}$ since, from Lemma \ref{wd2}, vertices may be permuted in arbitrary way in each class. Then define $Q$ to be the subgroup of $\text{Aut}(\Gamma)$ which consists of elements permuting equivalence classes but fixing orders in all classes. $Q$ is clearly isomorphic to $\text{Aut}^{\text{lab}}(\bar{\Gamma})$. Now, since $S\triangleleft\text{Aut}(\Gamma)$, $S\cap Q$ is trivial and $SQ=\text{Aut}(\Gamma)$, we have $\text{Aut}(\Gamma)=S\rtimes Q$.
\end{proof}
This lemma provides $\text{Aut}(A_\Gamma)$ with the structure of a semi-direct product of $\text{Aut}^0(A_\Gamma)$ and $\text{Aut}^{\text{lab}}(\bar{\Gamma})$ since the intersection of $\text{Aut}^0(A_\Gamma)$ and $\text{Aut}(\Gamma)$ is $\prod_{[v]\in\text{V}(\Gamma)/\sim}S_{|[v]|}$:
\begin{cor}\label{straut}
\begin{align*}
\text{Aut}(A_\Gamma)&=\text{Aut}^0(A_\Gamma)\rtimes\text{Aut}^{\text{lab}}(\bar{\Gamma}), \\
\text{Out}(A_\Gamma)&=\text{Out}^0(A_\Gamma)\rtimes\text{Aut}^{\text{lab}}(\bar{\Gamma}).
\end{align*}
\end{cor}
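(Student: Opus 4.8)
The plan is to upgrade the semidirect decomposition $\text{Aut}(\Gamma)=S\rtimes Q$ of the preceding lemma (with $S=\prod_{[v]}S_{|[v]|}$ and $Q\cong\text{Aut}^{\text{lab}}(\bar{\Gamma})$, realised inside $\text{Aut}(A_\Gamma)$ via graph symmetries) to an internal decomposition of $\text{Aut}(A_\Gamma)$ along $\text{Aut}^0(A_\Gamma)$. Since the Laurence--Servatius generators are exactly the generators of $\text{Aut}^0(A_\Gamma)$ together with the graph symmetries, it suffices to check three things: (i) $\text{Aut}^0(A_\Gamma)\triangleleft\text{Aut}(A_\Gamma)$; (ii) $S\subseteq\text{Aut}^0(A_\Gamma)$; and (iii) $Q\cap\text{Aut}^0(A_\Gamma)=\{e\}$. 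Granting these, $\text{Aut}(A_\Gamma)=\text{Aut}^0(A_\Gamma)\cdot\text{Aut}(\Gamma)=\text{Aut}^0(A_\Gamma)\cdot S\cdot Q=\text{Aut}^0(A_\Gamma)\cdot Q$ by (ii), while (iii) gives trivial intersection and (i) normality, yielding the first identity as an internal semidirect product.

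For (i) I would invoke relation 6 of Lemma \ref{Day}, namely $\sigma(A,a)\sigma^{-1}=(\sigma(A),\sigma(a))$: conjugating an inversion, transvection, or partial conjugation by a graph symmetry again produces an automorphism of one of these types, so the graph symmetries normalise $\text{Aut}^0(A_\Gamma)$; as $\text{Aut}^0(A_\Gamma)$ is a subgroup and the whole group is generated by these two families, normality follows. For (ii) it is enough to realise a transposition of two vertices $v_1,v_2$ lying in a single class $[v]$ by a word in inversions, transvections, and partial conjugations, since such transpositions generate $S$. By Lemma \ref{wd} the class $[v]$ spans either a complete or a null subgraph. In the complete case $\langle v_1,v_2\rangle\cong\mathbb{Z}^2$, and the automorphisms $\iota_{v_i}$, $\lambda_{v_1,v_2}$, $\lambda_{v_2,v_1}$ (all available because $v_1\sim v_2$) act on this factor as standard generators of $\text{GL}(2,\mathbb{Z})$, a product of which equals the swap. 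In the null case $\langle v_1,v_2\rangle\cong F_2$ with $\{v_i\}\in\text{CC}(v_j)$, and the same generators together with $\gamma_{v_j,\{v_i\}}$ (using $\rho_{v,w}=\lambda_{v,w}\gamma_{v,\{w\}}^{-1}$) furnish the Nielsen word interchanging the two free generators. In both cases every other vertex is fixed, so the word is exactly the desired transposition in $\text{Aut}(A_\Gamma)$.

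The hard part will be (iii), which I plan to detect on the abelianisation $A_\Gamma^{\text{ab}}=\mathbb{Z}^{|\text{V}(\Gamma)|}$. There a graph symmetry abelianises to a permutation matrix $\bm{P}_\sigma$, a transvection $\lambda_{v,w}$ (with $v\geq w$) to $\bm{I}+\bm{E}_{vw}$, an inversion to a sign change $\bm{I}-2\bm{E}_{vv}$, and every partial conjugation to $\bm{I}$. Fix a total order $\prec$ on $\text{V}(\Gamma)/\sim$ refining the domination order (so $[w]<[v]$ implies $[w]\prec[v]$) and group the basis by $\sim$-classes. Then each transvection contributes its unique off-diagonal entry in block position $([v],[w])$ with $[v]\succeq[w]$, and inversions are block-diagonal, so the image of $\text{Aut}^0(A_\Gamma)$ in $\text{GL}(|\text{V}(\Gamma)|,\mathbb{Z})$ consists of block-lower-triangular matrices. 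Now if $\sigma\in Q$ has $\bar\sigma\in\text{Aut}^0(A_\Gamma)$, then $\bm{P}_\sigma$ is block-lower-triangular; since $\bm{P}_\sigma$ carries the block $[v]$ to the block $[\sigma v]$, triangularity forces $[\sigma v]\succeq[v]$ for every class. A permutation of a finite totally ordered set that is pointwise $\succeq$ the identity must be the identity, so $\sigma$ fixes every class, i.e. $\sigma\in S\cap Q=\{e\}$; this is precisely (iii).

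Finally, to pass to the outer groups I would quotient the internal decomposition $\text{Aut}(A_\Gamma)=\text{Aut}^0(A_\Gamma)\rtimes Q$ by $\text{Inn}(A_\Gamma)$, which is normal in $\text{Aut}(A_\Gamma)$ and, being generated by partial conjugations, lies inside the normal factor $\text{Aut}^0(A_\Gamma)$. Because $Q\cap\text{Inn}(A_\Gamma)\subseteq Q\cap\text{Aut}^0(A_\Gamma)=\{e\}$, the image of $Q$ is a complement to $\text{Aut}^0(A_\Gamma)/\text{Inn}(A_\Gamma)=\text{Out}^0(A_\Gamma)$, and the semidirect structure descends to $\text{Out}(A_\Gamma)=\text{Out}^0(A_\Gamma)\rtimes\text{Aut}^{\text{lab}}(\bar{\Gamma})$, as claimed.
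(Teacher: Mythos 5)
Your proof is correct and follows essentially the same route as the paper: the paper's one-sentence justification rests on the assertion that $\text{Aut}^0(A_\Gamma)\cap\text{Aut}(\Gamma)=\prod_{[v]\in\text{V}(\Gamma)/\sim}S_{|[v]|}$ inside $\text{Aut}(A_\Gamma)$, which is exactly what your steps (ii) and (iii) establish (the two inclusions, via realising in-class transpositions by pure generators and via block-triangularity on the abelianisation). The remaining points --- normality of $\text{Aut}^0(A_\Gamma)$, generation, and descent to the outer groups using $\text{Inn}(A_\Gamma)\subseteq\text{Aut}^0(A_\Gamma)$ --- are the routine facts the paper leaves implicit, and you verify them correctly.
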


\subsection{Finite subgroups in pure (outer) automorphism groups of right-angled Artin groups}
We introduce some notations to evaluate torsion subgroups in groups:
\begin{itemize}
\item $\nu_p(G)$ denotes the {\it $p$-adic valuation} of $G$, that is, $\max\{t\mid \exists H<G,\text{ s.t. } |H|=p^t\}$,
\item $\text{rank}_{\mathbb{Z}_p}(G)$ denotes the {\it $\mathbb{Z}_p$-rank} of $G$, that is, $\max\{t\mid \mathbb{Z}_p^t\hookrightarrow G\}$.
\end{itemize}
Note that, for a finite group $G$, $\nu_p(G)=\nu_p(|G|)$ due to Sylow's theorem, where $\nu_p$ in the right hand side of the equality denotes the commonly-used $p$-adic valuation of integers.
We offer some properties of these values:
\begin{lem}\label{basictools}
Let $G$ and $H$ be groups. Then,
\begin{itemize}
\item $0\leq\text{rank}_{\mathbb{Z}_p}(G)\leq\nu_p(G)\leq\infty$
\item $\nu_p(G\times H)=\nu_p(G)+\nu_p(H)$ and $\text{rank}_{\mathbb{Z}_p}(G\times H)=\text{rank}_{\mathbb{Z}_p}(G)+\text{rank}_{\mathbb{Z}_p}(H)$, 
\item $\nu_p(G*H)=\max\{\nu_p(G),\nu_p(H)\}$ and $\text{rank}_{\mathbb{Z}_p}(G*H)=\max\{\text{rank}_{\mathbb{Z}_p}(G),\text{rank}_{\mathbb{Z}_p}(H)\}$,
\item $\max\{\nu_p(G),\nu_p(H)\}\leq\nu_p(G\rtimes H)\leq\nu_p(G)+\nu_p(H)$ and $\max\{\text{rank}_{\mathbb{Z}_p}(G),\text{rank}_{\mathbb{Z}_p}(H)\}\leq\text{rank}_{\mathbb{Z}_p}(G\rtimes H)\leq\text{rank}_{\mathbb{Z}_p}(G)+\text{rank}_{\mathbb{Z}_p}(H)$,
\item if there exists a homomorphism $\phi\colon G\rightarrow H$ whose kernel is torsion-free, then $\nu_p(G)\leq\nu_p(H)$ and $\text{rank}_{\mathbb{Z}_p}(G)\leq\text{rank}_{\mathbb{Z}_p}(H)$.
\end{itemize}
\end{lem}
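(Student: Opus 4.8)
The plan is to prove both invariants in parallel, since $\nu_p$ and $\text{rank}_{\mathbb{Z}_p}$ obey the same formal rules; the only difference is bookkeeping, as orders multiply while ranks of elementary abelian groups add as $\mathbb{F}_p$-dimensions. Throughout I would lean on two elementary facts extracted directly from the definitions: any subgroup inclusion $K\hookrightarrow G$ forces $\nu_p(K)\leq\nu_p(G)$ and $\text{rank}_{\mathbb{Z}_p}(K)\leq\text{rank}_{\mathbb{Z}_p}(G)$; and the image of a finite $p$-group (resp. an elementary abelian $p$-group) under a homomorphism is again a finite $p$-group (resp. elementary abelian $p$-group). The first bullet is then immediate: $0\leq\text{rank}_{\mathbb{Z}_p}(G)$ since $t=0$ is always admissible, and $\text{rank}_{\mathbb{Z}_p}(G)\leq\nu_p(G)$ because an embedded copy of $\mathbb{Z}_p^t$ is in particular a subgroup of order $p^t$.

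For the upper bounds in the direct-product and semidirect-product bullets I would use the canonical projection onto one factor, treating the direct product as the special case of a trivial action. Given a finite $p$-subgroup $P\leq G\rtimes H$, project via $\pi\colon G\rtimes H\twoheadrightarrow H$ with kernel $G$. Then $\ker(\pi|_P)=P\cap G$ is a $p$-subgroup of $G$ and $\pi(P)$ is a $p$-subgroup of $H$, so the first isomorphism theorem gives $|P|=|P\cap G|\cdot|\pi(P)|\leq p^{\nu_p(G)+\nu_p(H)}$, whence $\nu_p(G\rtimes H)\leq\nu_p(G)+\nu_p(H)$. For the rank statement, an embedded $E\cong\mathbb{Z}_p^t$ fits into a short exact sequence $0\to E\cap G\to E\to\pi(E)\to 0$ of $\mathbb{F}_p$-vector spaces, so $t=\dim(E\cap G)+\dim\pi(E)\leq\text{rank}_{\mathbb{Z}_p}(G)+\text{rank}_{\mathbb{Z}_p}(H)$. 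The matching lower bound $\max\{\cdots\}$ comes from the inclusions $G,H\hookrightarrow G\rtimes H$; for the direct product the two bounds coincide, forcing equality.

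The free-product bullet is where the real content lies, and I expect it to be the main obstacle. The lower bound $\nu_p(G*H)\geq\max\{\nu_p(G),\nu_p(H)\}$ (and likewise for rank) is again just the inclusions of the two factors. For the reverse inequality I would invoke the Kurosh subgroup theorem: every finite subgroup of $G*H$ is conjugate into one of the factors, since a finite group admits no nontrivial free factor and no nontrivial free-product decomposition into nontrivial groups. Applying this to an arbitrary finite $p$-subgroup (and to an arbitrary copy of $\mathbb{Z}_p^t$) shows it is isomorphic to a subgroup of $G$ or of $H$, which yields $\nu_p(G*H)\leq\max\{\nu_p(G),\nu_p(H)\}$ and the corresponding rank inequality. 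The only care needed is the degenerate case where an invariant is infinite, but since each individual finite subgroup is still conjugate into a factor, the maximum formula persists verbatim.

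Finally, for the torsion-free-kernel bullet, suppose $\phi\colon G\to H$ has torsion-free kernel. For any finite subgroup $P\leq G$ the restriction $\phi|_P$ has kernel $P\cap\ker\phi$, a finite subgroup of a torsion-free group, hence trivial; thus $\phi|_P$ is injective and carries a $p$-subgroup (resp. an elementary abelian $p$-subgroup) of $G$ isomorphically onto one inside $H$, giving $\nu_p(G)\leq\nu_p(H)$ and $\text{rank}_{\mathbb{Z}_p}(G)\leq\text{rank}_{\mathbb{Z}_p}(H)$. All five items then follow, the Kurosh input for free products being the only step beyond routine subgroup and projection arguments.
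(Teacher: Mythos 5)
The paper itself states this lemma without proof, treating it as a collection of standard facts, so the only real question is whether your argument is correct and complete. Four of your five bullets are fine: the first bullet, the Kurosh argument for free products (a nontrivial finite group admits no nontrivial free-product decomposition and no nontrivial free factor, hence every finite subgroup of $G*H$ is conjugate into a factor), the projection argument giving $\nu_p(G\rtimes H)\leq\nu_p(G)+\nu_p(H)$ together with its rank analogue via the exact sequence of $\mathbb{F}_p$-vector spaces, and the torsion-free-kernel bullet are all correct as written.

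The gap is in the direct-product bullet, which asserts the equality $\nu_p(G\times H)=\nu_p(G)+\nu_p(H)$ (and likewise for rank). The only lower bound you establish comes from the inclusions $G,H\hookrightarrow G\times H$, which gives $\max\{\nu_p(G),\nu_p(H)\}$, and your sentence ``for the direct product the two bounds coincide, forcing equality'' is false: $\max\{a,b\}$ equals $a+b$ only when one of $a,b$ is zero. What is missing is the observation, special to the direct product, that subgroups of the two factors can be combined: if $P\leq G$ and $Q\leq H$ are finite $p$-subgroups then $P\times Q\leq G\times H$ is a $p$-subgroup of order $|P|\cdot|Q|$, and if $\mathbb{Z}_p^a\hookrightarrow G$ and $\mathbb{Z}_p^b\hookrightarrow H$ then $\mathbb{Z}_p^{a+b}\cong\mathbb{Z}_p^a\times\mathbb{Z}_p^b\hookrightarrow G\times H$. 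This yields the lower bounds $\nu_p(G\times H)\geq\nu_p(G)+\nu_p(H)$ and $\text{rank}_{\mathbb{Z}_p}(G\times H)\geq\text{rank}_{\mathbb{Z}_p}(G)+\text{rank}_{\mathbb{Z}_p}(H)$ (with the infinite cases handled trivially), which match your projection upper bounds and complete the equalities. The repair is one line, but as the proposal stands the second bullet is not proved; note also that this is exactly the point where the direct and semidirect cases genuinely differ, since in $G\rtimes H$ the subgroups $P$ and $Q$ need not generate a group of order $|P|\cdot|Q|$, which is why only the inequality survives there.
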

Notice that there is a monotonicity of $\nu_p$ and $\text{rank}_{\mathbb{Z}_p}$ with respect to inclusions of groups, as the final statement of Lemma \ref{basictools} indicates.
$p$-adic valuations and $\mathbb{Z}_p$-ranks of (outer) automorphism groups of free groups and general linear groups over integers are computed in the next lemma.
\begin{lem}\label{autoutp}
Let $p$ be a prime and let $n$ be a positive integer. Then,
\begin{align*}
\nu_p(\text{Aut}(F_n))&=\nu_p(\text{Out}(F_n))=\nu_p(\text{GL}(n,\mathbb{Z}))=\sum_{k\geq0}\left\lfloor\frac{n}{p^k(p-1)}\right\rfloor,\\
\text{rank}_{\mathbb{Z}_p}(\text{Aut}(F_n))&=\text{rank}_{\mathbb{Z}_p}(\text{Aut}(F_n))=\text{rank}_{\mathbb{Z}_p}(\text{GL}(n,\mathbb{Z}))=\left\lfloor\frac{n}{p-1}\right\rfloor.
\end{align*}
\end{lem}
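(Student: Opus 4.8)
The plan is to reduce all six quantities to a single computation for $\text{GL}(n,\mathbb{Z})$, using the homology representations $\text{Aut}(F_n)\to\text{GL}(n,\mathbb{Z})$ and $\text{Out}(F_n)\to\text{GL}(n,\mathbb{Z})$ coming from the action on $F_n^{\text{ab}}\cong\mathbb{Z}^n$, and then to squeeze the two free-group invariants between $\text{GL}(n,\mathbb{Z})$ from above and below. Concretely, I would first compute both invariants for $\text{GL}(n,\mathbb{Z})$ and then establish
\[
\nu_p(\text{GL}(n,\mathbb{Z}))\le\nu_p(\text{Aut}(F_n))\le\nu_p(\text{Out}(F_n))\le\nu_p(\text{GL}(n,\mathbb{Z})),
\]
together with the identical chain with $\text{rank}_{\mathbb{Z}_p}$ in place of $\nu_p$, which forces equality throughout.

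For $\text{GL}(n,\mathbb{Z})$ itself: every finite subgroup of $\text{GL}(n,\mathbb{Q})$ preserves a lattice and is hence conjugate into $\text{GL}(n,\mathbb{Z})$, so by Sylow's theorem $\nu_p(\text{GL}(n,\mathbb{Z}))$ equals the $p$-adic valuation $\nu_p(M(n))$ of Minkowski's number, and Theorem \ref{Minkowski} then gives $\nu_p(\text{GL}(n,\mathbb{Z}))=\sum_{k\ge0}\lfloor n/(p^k(p-1))\rfloor$ at once. For the $\mathbb{Z}_p$-rank I would argue representation-theoretically: an embedding $\mathbb{Z}_p^t\hookrightarrow\text{GL}(n,\mathbb{Z})\subset\text{GL}(n,\mathbb{Q})$ is a faithful rational representation of dimension $n$; each nontrivial $\mathbb{Q}$-irreducible of $\mathbb{Z}_p^t$ has dimension $p-1$ and its kernel is a hyperplane in the dual $\mathbb{F}_p^t$, so faithfulness requires at least $t$ such summands, whence $n\ge t(p-1)$ and $\text{rank}_{\mathbb{Z}_p}(\text{GL}(n,\mathbb{Z}))\le\lfloor n/(p-1)\rfloor$. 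Equality is realized by a direct sum of $\lfloor n/(p-1)\rfloor$ cyclotomic representations $\mathbb{Z}_p\curvearrowright\mathbb{Z}[\zeta_p]\cong\mathbb{Z}^{p-1}$.

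For the upper bounds I would invoke the classical fact that both homology representations have torsion-free kernels (for $\text{Aut}(F_n)$ this is the Baumslag--Taylor theorem that $\text{IA}_n$ is torsion-free; for $\text{Out}(F_n)$ it follows since the level-$3$ congruence subgroup is torsion-free); the last item of Lemma \ref{basictools} then yields $\nu_p(\text{Aut}(F_n)),\,\nu_p(\text{Out}(F_n))\le\nu_p(\text{GL}(n,\mathbb{Z}))$ and the same for $\text{rank}_{\mathbb{Z}_p}$. For the lower bounds, note that $\text{Inn}(F_n)\cong F_n$ is torsion-free, so every finite subgroup of $\text{Aut}(F_n)$ injects into $\text{Out}(F_n)$, giving $\nu_p(\text{Out}(F_n))\ge\nu_p(\text{Aut}(F_n))$ and the analogue for the rank; it therefore suffices to realize large finite $p$-groups inside $\text{Aut}(F_n)$. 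Here Khramtsov's classification (Theorem \ref{Kh}) does the work: the single wreath factor $S_p\wr S_{m}$ with $m=\lfloor n/(p-1)\rfloor$ satisfies $(p-1)m\le n$, so $S_p\wr S_{m}\hookrightarrow\text{Aut}(F_n)$, and a short count gives $\nu_p(S_p\wr S_m)=m+\nu_p(m!)=\sum_{j\ge0}\lfloor m/p^j\rfloor$; the nested-floor identity $\lfloor\lfloor n/(p-1)\rfloor/p^j\rfloor=\lfloor n/(p^j(p-1))\rfloor$ makes this equal to $\sum_{k\ge0}\lfloor n/(p^k(p-1))\rfloor$. The elementary abelian subgroup $\mathbb{Z}_p^{m}$ realizes the rank lower bound $\lfloor n/(p-1)\rfloor$ by the abelian clause of Theorem \ref{Kh}.

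I expect the main obstacle to be the torsion-freeness inputs that drive the upper bounds, in particular the fact that the kernel of $\text{Out}(F_n)\to\text{GL}(n,\mathbb{Z})$ is torsion-free: this does not follow formally from torsion-freeness of $\text{IA}_n$ and $\text{Inn}(F_n)$ alone, and genuinely requires a realization or congruence-subgroup input. The remaining delicate points are purely bookkeeping: verifying that the $p$-group supplied by Khramtsov's theorem has exactly the Minkowski valuation (the $S_p\wr S_m$ count combined with the nested-floor identity), and confirming that the same sandwich closes in the case $p=2$, where $p-1=1$ and the extremal subgroup is the signed-permutation group $\mathbb{Z}_2\wr S_n$.
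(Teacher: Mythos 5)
Your proposal is correct, and it shares most of its ingredients with the paper's proof --- Minkowski's theorem for the upper bound on $\nu_p(\text{GL}(n,\mathbb{Z}))$, the character-theoretic argument (kernels of nontrivial rational irreducibles are hyperplanes, forcing $n\geq t(p-1)$) for the rank upper bound, Khramtsov's classification for producing large $p$-subgroups of $\text{Aut}(F_n)$, and Baumslag--Taylor torsion-freeness --- but the logical flow is reversed in one meaningful way. The paper anchors the computation at the free-group side: it reads off all four values for $\text{Aut}(F_n)$ and $\text{Out}(F_n)$ directly from the classifications of Khramtsov (Theorem \ref{Kh}) and Mazurov (Theorem \ref{Ma}), and then obtains the \emph{lower} bounds for $\text{GL}(n,\mathbb{Z})$ by pushing finite subgroups of $\text{Aut}(F_n)$ forward along the torsion-free-kernel maps. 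You anchor at $\text{GL}(n,\mathbb{Z})$ instead: you realize the extremal subgroups intrinsically (conjugating finite subgroups of $\text{GL}(n,\mathbb{Q})$ into $\text{GL}(n,\mathbb{Z})$ for $\nu_p$, and cyclotomic lattices $\mathbb{Z}[\zeta_p]\cong\mathbb{Z}^{p-1}$ for the rank) and then recover the $\text{Out}(F_n)$ values by sandwiching, using that $\text{Inn}(F_n)$ and the kernel of $\text{Out}(F_n)\to\text{GL}(n,\mathbb{Z})$ are torsion-free. The net effect is that you avoid Mazurov's theorem entirely, which is a genuine economy (the paper's Theorem \ref{Ma} is a hard classification whose exceptional $n=10$ case plays no role here); the price is the torsion-freeness of the kernel of $\text{Out}(F_n)\to\text{GL}(n,\mathbb{Z})$, which you rightly flag as the one input that does not follow formally from torsion-freeness of $\text{IA}_n$ and $\text{Inn}(F_n)$ --- but this is exactly the fact the paper already cites from Baumslag--Taylor (it invokes torsion-free kernels for both maps $\text{Aut}(F_n)\to\text{Out}(F_n)\to\text{GL}(n,\mathbb{Z})$), so your route incurs no input beyond what the paper uses. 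Your bookkeeping is also sound: $\nu_p(S_p\wr S_m)=m+\nu_p(m!)=\sum_{j\geq0}\lfloor m/p^j\rfloor$ with $m=\lfloor n/(p-1)\rfloor$ matches the Minkowski sum via the nested-floor identity, and the abelian clause of Theorem \ref{Kh} gives the rank lower bound.
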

\begin{proof}
Values other than $\nu_p(\text{GL}(n,\mathbb{Z}))$ and $\text{rank}_{\mathbb{Z}_p}(\text{GL}(n,\mathbb{Z}))$ are almost obvious from Theorem \ref{Kh} and Theorem \ref{Ma}.

\noindent{\bf $p$-adic valuation} From Theorem \ref{Minkowski}, since $\text{GL}(n,\mathbb{Z})<\text{GL}(n,\mathbb{Q})$,
\[
\nu_p(\text{GL}(n,\mathbb{Z}))\leq\sum_{k\geq0}\left\lfloor\frac{n}{p^k(p-1)}\right\rfloor.
\] 
On the other hand, Baumslag and Taylor \cite{B&T} showed that the natural maps below have torsion-free kernels.
\[
\text{Aut}(F_n)\rightarrow\text{Out}(F_n)\rightarrow\text{GL}(n,\mathbb{Z}).
\]
Thus,
\[
\nu_p(\text{GL}(n,\mathbb{Z}))\geq\sum_{k\geq0}\left\lfloor\frac{n}{p^k(p-1)}\right\rfloor,
\] 
giving the exact value of $\nu_p(\text{GL}(n,\mathbb{Z}))$.

\noindent{\bf $\mathbb{Z}_p$-rank} Let $\mathbb{Z}_p^r$ be embedded in $\text{GL}(n,\mathbb{Z})$. The character $\rho$ is an $\mathbb{Z}_{\geq0}$-combination of irreducible characters, whose set is denoted by $\hat{\mathbb{Z}_p^r}$. Since $\mathbb{Z}_p^r$ is abelian, $\hat{\mathbb{Z}_p^r}$ coincides with the set of group homomorphisms from $\mathbb{Z}_p^r\text{ to }\mathbb{C}^*$. This set is divided into subsets corresponding to subgroups: $\hat{\mathbb{Z}_p^r}=\bigsqcup_{H<\mathbb{Z}_p^r}\hat{\mathbb{Z}_p^r}_H$, where $\hat{\mathbb{Z}_p^r}_H$ denotes the set of irreducible characters whose kernels are $H$. Denote $\sum_{\chi\in\hat{\mathbb{Z}_p^r}_H}\chi$ by $\chi_H$. As $\rho$ is $\mathbb{Z}$-valued, 
\[
\rho=\sum_{H<\mathbb{Z}_p^r}\alpha_H\chi_H\text{ ($\alpha_H\in\mathbb{Z}_{\geq0}$)}.
\]
Moreover, since $\mathbb{Z}_p^r\hookrightarrow\text{GL}(n,\mathbb{Z})$, $\bigcap_{\alpha_H>0}H$ is trivial. Recall that irreducible characters are group homomorphisms into complex numbers, and hence kernels are cyclic. Thus $[\mathbb{Z}_p^r:H]$ is $p$ except for the case $H=\mathbb{Z}_p^r$. Denote $|\{H\neq\mathbb{Z}_p^r|\alpha_H>0\}|$ by $N$. Then, 
\[
\frac{1}{p^N}|\mathbb{Z}_p^r|\leq 1.
\]
Hence, $N\geq r$. Now we have:
\[
n=\dim\rho=\sum_H\alpha_H|\hat{\mathbb{Z}_p^r}_H|\geq N(p-1)\geq r(p-1),
\]
as desired.
\end{proof}
The next lemma is a key to evaluate finite subgroups in pure (outer) automorphism groups of RAAGs from above.
\begin{lem}\label{above}
All the homomorphisms appearing in the diagram below have torsion-free kernels:
$$
\text{Aut}^0(A_\Gamma)\rightarrow\text{Out}^0(A_\Gamma)\rightarrow\text{TB}(\{|[v]|\},\mathbb{Z})\rightarrow\text{DB}(\{|[v]|\},\mathbb{Z})\rightarrow\prod_{[v]\in\text{V}(\Gamma)/\sim}\text{GL}(|[v]|,\mathbb{Z}),
$$
where $\text{TB}(\{|[v]|\},\mathbb{Z})$ denotes the group of (upper) triangular block invertible matrices, where the basis of vector space are divided into subsets which have sizes $\{|[v]|\}$, and $\text{DB}(\{|[v]|\},\mathbb{Z})<\text{TB}(\{|[v]|\},\mathbb{Z})$ denotes the group of diagonal block invertible matrices.
\end{lem}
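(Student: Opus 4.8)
The plan is to analyze the four maps separately, since their kernels are of quite different natures; for each I verify that a finite-order element of the kernel must be trivial. The two rightmost maps are purely linear-algebraic. The map $\text{DB}(\{|[v]|\},\mathbb{Z})\to\prod_{[v]}\text{GL}(|[v]|,\mathbb{Z})$ is the tautological identification of a block-diagonal matrix with the tuple of its diagonal blocks, so it is an isomorphism and its kernel is trivial. For $\text{TB}(\{|[v]|\},\mathbb{Z})\to\text{DB}(\{|[v]|\},\mathbb{Z})$, sending an upper-triangular block matrix to the tuple of its diagonal blocks is a homomorphism (the diagonal blocks of a product are the products of the diagonal blocks), and its kernel consists of the matrices $\bm{I}+\bm{N}$ with $\bm{N}$ strictly upper block-triangular. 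Every such matrix is unipotent, and a unipotent element of $\text{GL}(n,\mathbb{Q})$ of finite multiplicative order is the identity: its minimal polynomial divides both $(x-1)^n$ and some $x^k-1$, and since the latter is squarefree over $\mathbb{Q}$ this forces the minimal polynomial to be $x-1$. Hence this kernel is torsion-free.

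For the leftmost map $\text{Aut}^0(A_\Gamma)\to\text{Out}^0(A_\Gamma)$, the kernel is $\text{Inn}(A_\Gamma)\cong A_\Gamma/Z(A_\Gamma)$. The central vertices of $\Gamma$, namely those adjacent to all others, span a clique that is joined to the full subgraph $\Gamma'$ on the remaining vertices, so $A_\Gamma\cong\mathbb{Z}^c\times A_{\Gamma'}$ with $A_{\Gamma'}$ centerless; thus $\text{Inn}(A_\Gamma)\cong A_{\Gamma'}$, itself a right-angled Artin group and in particular torsion-free by bi-orderability.

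The substantive map is $\text{Out}^0(A_\Gamma)\to\text{TB}(\{|[v]|\},\mathbb{Z})$, which I first identify as the action on $H_1(A_\Gamma)\cong\mathbb{Z}^{|\text{V}(\Gamma)|}$. On the Laurence-Servatius generators of $\text{Aut}^0(A_\Gamma)$ this action is: inversions act diagonally, partial conjugations act trivially, and a transvection $\lambda_{v,w}$ (with $v\geq w$) acts by $\bm{I}+\bm{E}_{vw}$. Ordering the basis by a linear extension of the domination order on the classes $[v]$, each such $\bm{E}_{vw}$ lies on or above the block diagonal, so the image lies in $\text{TB}$; since partial conjugations generate $\text{Inn}(A_\Gamma)$ and act trivially on $H_1$, the map factors through $\text{Out}^0(A_\Gamma)$. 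Its kernel is the pure outer Torelli subgroup, which is contained in $\ker\big(\text{Out}(A_\Gamma)\to\text{GL}(|\text{V}(\Gamma)|,\mathbb{Z})\big)$, so it suffices to prove the latter is torsion-free.

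I would establish this in two stages, and this is where the real work lies. First, at the level of $\text{Aut}(A_\Gamma)$: because RAAGs are residually torsion-free nilpotent, the lower central quotients $\gamma_k/\gamma_{k+1}$ being free abelian by Duchamp-Krob \cite{D&K}, the Baumslag-Taylor argument shows $\ker(\text{Aut}(A_\Gamma)\to\text{GL})$ is torsion-free. Concretely, a finite-order $\phi$ acting trivially on $H_1$ acts trivially on each $A_\Gamma/\gamma_{k+1}$ by induction on $k$: the obstruction at step $k$ is a derivation $H_1\to\gamma_k/\gamma_{k+1}$ some integer multiple of which vanishes, and torsion-freeness of $\gamma_k/\gamma_{k+1}$ forces the derivation itself to vanish; since $\bigcap_k\gamma_k=\{e\}$ one concludes $\phi=\text{id}$. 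The main obstacle is the second stage, descending from $\text{Aut}$ to $\text{Out}$: a finite-order outer class need not lift to a finite-order automorphism, so torsion-freeness of the inner and IA kernels does not formally pass to the quotient (indeed a quotient of a torsion-free group by a torsion-free normal subgroup can acquire torsion). I expect to handle this using the unique root property (Lemma \ref{URP}): lifting a putative order-$p$ class to $\phi\in\text{IA}(A_\Gamma)$ with $\phi^p=\gamma_g$, one passes to the Malcev completion, where $\phi$ becomes conjugation by the unique $p$-th root of $g$, and then uses root-closure of $A_\Gamma$ in its completion to conclude $\phi\in\text{Inn}(A_\Gamma)$, so that the outer class is trivial.
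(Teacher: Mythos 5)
Your handling of three of the four maps coincides with the paper's proof: the kernel of $\text{Aut}^0(A_\Gamma)\to\text{Out}^0(A_\Gamma)$ is $\text{Inn}(A_\Gamma)$, itself a RAAG and hence torsion-free; the kernel of $\text{TB}\to\text{DB}$ consists of unipotent matrices and is torsion-free; the last map is an isomorphism; and the image of $\text{Out}^0(A_\Gamma)$ lies in $\text{TB}$ once the classes $[v]$ are ordered compatibly with domination. The divergence is at the second map, and you correctly identified it as the place where the real work lies. But the paper does not prove that $\ker\bigl(\text{Out}(A_\Gamma)\to\text{GL}(|\text{V}(\Gamma)|,\mathbb{Z})\bigr)$ is torsion-free; it cites Toinet \cite{T}, whose proof is a substantial theorem resting on conjugacy $p$-separability of RAAGs. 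Your stage one (the Baumslag--Taylor induction up the lower central series, valid since the quotients are torsion-free by \cite{D&K}) correctly disposes of the $\text{Aut}$-level kernel, but your stage two --- the descent to $\text{Out}$ --- contains a genuine gap.

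First, ``root-closure of $A_\Gamma$ in its Malcev completion'' is false: any element of $A_\Gamma$ that is not a $p$-th power (e.g.\ a vertex generator) acquires a $p$-th root in the completion that does not lie in $A_\Gamma$. What your argument actually needs is a normalizer statement: if $h$ in the completion satisfies $hA_\Gamma h^{-1}=A_\Gamma$, then conjugation by $h$ restricts to an \emph{inner} automorphism of $A_\Gamma$. This cannot follow from the soft properties you invoke (finite generation, unique roots, residual torsion-free nilpotence), because the Malcev completion retains only $H_1\otimes\mathbb{Q}$ and therefore cannot see your integral hypothesis that $\phi$ acts trivially on $H_1(A_\Gamma;\mathbb{Z})$. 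Concretely, let $N=\langle a,b,c\mid [a,b]=c^2,\ [a,c]=[b,c]=1\rangle$, a finitely generated torsion-free nilpotent group (so it has unique roots and is residually torsion-free nilpotent). In its Malcev completion, conjugation by the unique square root $a^{1/2}$ of $a$ preserves $N$ (it fixes $a$ and $c$ and sends $b\mapsto bc$), is trivial on $H_1(N)\otimes\mathbb{Q}$, and its square is the inner automorphism $\gamma_a$; yet it is not inner, since inner automorphisms of $N$ send $b\mapsto bc^{2k}$. So your stage-two mechanism runs verbatim on $N$ --- the extension of $\phi$ to the completion is exactly conjugation by $g^{1/2}$ with $g=a$ --- and arrives at a false conclusion: $\ker\bigl(\text{Out}(N)\to\text{GL}(H_1(N)/\text{torsion})\bigr)$ contains $2$-torsion. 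The statement for RAAGs therefore depends on input special to RAAGs that survives integrally, which is precisely what Toinet's $p$-separability argument supplies. You should cite \cite{T} at this step, as the paper does, or else provide an argument of comparable depth; the Malcev-completion sketch cannot be repaired at the level of generality at which it is pitched.
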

\begin{proof}

\noindent{\bf The first map} The kernel of this homomorphism is $\text{Inn}(A_\Gamma)$, which is torsion-free since it is also a RAAG $A_\Lambda$. This is an easy consequence of Servatius' Centralizer Theorem \cite{S}.

\noindent{\bf The second map} The fact that the kernel of the map $\text{Out}(A_\Gamma)\rightarrow\text{GL}(|\text{V}(\Gamma)|,\mathbb{Z}); \phi\mapsto\phi^{\text{ab}}$ is torsion-free is proved by Toinet \cite{T}. By ordering vertices appropriately, we can make the image of the pure outer automorphism group $\text{Out}^0(A_\Gamma)$ contained in $\text{TB}(\{|[v]|\},\mathbb{Z})$: vertex domination induces a partial order on $\text{V}(\Gamma)/\sim$ and hence one can assign integers $1,\cdots,|\text{V}(\Gamma)/\sim|$ so that the number of $[w]$ is bigger than the number of $[v]$ if $[v]<[w]$.

\noindent{\bf The third map} The third map is the projection onto diagonal blocks. The kernel of this map consists only of unitriangular matrices, that is, triangular matrices whose diagonal entries are all $1$. Thus the kernel of the third map is torsion-free since the group of unitriangular matrices is torsion-free.

\noindent{\bf The fourth map} This map is an isomorphism
.\end{proof}

\begin{cor}
Let $\Gamma$ be an asymmetric graph, that is, a graph whose automorphism group is trivial and let the (outer) automorphism group of $A_\Lambda$ be embedded in the (outer) automorphism group of $A_\Gamma$. Then $\Lambda$ is also asymmetric.
\end{cor}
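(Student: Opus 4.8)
The plan is to prove the contrapositive by isolating a single conjugacy-invariant feature of these groups: \emph{the set of orders of their finite-order elements}. Concretely, I will establish two complementary facts and combine them. First, if $\Gamma$ is asymmetric then every finite-order element of $\text{Aut}(A_\Gamma)$ and of $\text{Out}(A_\Gamma)$ has order $1$ or $2$. Second, if $\Lambda$ is \emph{not} asymmetric then both $\text{Aut}(A_\Lambda)$ and $\text{Out}(A_\Lambda)$ contain an element of order at least $3$. Since an injective homomorphism preserves the order of every element, these two facts are incompatible with an embedding of the (outer) automorphism group of $A_\Lambda$ into that of $A_\Gamma$, so $\Lambda$ must be asymmetric.

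For the first fact, asymmetry forces every $\sim$-class to be a singleton—otherwise two $\sim$-equivalent vertices could be transposed, producing a nontrivial element of $\text{Aut}(\Gamma)$ by Lemma \ref{wd2}—and it forces $\text{Aut}^{\text{lab}}(\bar\Gamma)=1$, since with only singleton classes $\bar\Gamma\cong\Gamma$ carries identical labels and label-preserving automorphisms are just graph automorphisms. Hence, by Corollary \ref{straut}, $\text{Aut}(A_\Gamma)=\text{Aut}^0(A_\Gamma)$ and $\text{Out}(A_\Gamma)=\text{Out}^0(A_\Gamma)$. Now apply Lemma \ref{above}: because every block has size $1$, the target $\prod_{[v]}\text{GL}(|[v]|,\mathbb{Z})$ is $\{\pm1\}^{|\text{V}(\Gamma)|}$, and the resulting homomorphisms $\text{Aut}^0(A_\Gamma)\to\{\pm1\}^{|\text{V}(\Gamma)|}$ and $\text{Out}^0(A_\Gamma)\to\{\pm1\}^{|\text{V}(\Gamma)|}$ have torsion-free kernels (a composite of homomorphisms with torsion-free kernels again has torsion-free kernel). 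A finite-order element therefore has the same order as its image, which is at most $2$.

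For the second fact, recall from the proof of Theorem \ref{Embedding} that the inversions and graph symmetries span a copy of $\mathbb{Z}_2^{|\text{V}(\Lambda)|}\rtimes\text{Aut}(\Lambda)$ inside $\text{Aut}(A_\Lambda)$, on which the abelianization $\text{Aut}(A_\Lambda)\to\text{GL}(|\text{V}(\Lambda)|,\mathbb{Z})$ acts by sending an inversion/symmetry pair to the corresponding signed permutation matrix; this is injective and factors through $\text{Out}(A_\Lambda)$, so the subgroup embeds in $\text{Out}(A_\Lambda)$ as well. Non-asymmetry yields a nontrivial $\sigma\in\text{Aut}(\Lambda)$. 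If $\sigma$ has order at least $3$, then $\bar\sigma$ does too. If $\sigma$ is an involution, choose a vertex $v$ with $w:=\sigma(v)\neq v$ and consider $\iota_v\bar\sigma$; using the relation $\bar\sigma\iota_v\bar\sigma^{-1}=\iota_{\sigma(v)}$ one computes $(\iota_v\bar\sigma)^2=\iota_v\iota_w$, which is a nontrivial involution, while $(\iota_v\bar\sigma)^4=e$, so $\iota_v\bar\sigma$ has order $4$. In either case we obtain an element of order at least $3$, as required.

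The main obstacle is precisely the involutive case of the second fact. Mere symmetry of $\Lambda$ need not produce elements of order exceeding $2$—the flip of $P_4$, whose $\sim$-classes are all singletons, is the prototypical trap—so the argument cannot rest on the order of a graph symmetry alone. The decisive point is that twisting an order-$2$ graph symmetry by a single inversion at a moved vertex raises the order from $2$ to $4$, and that this order-$4$ element survives in $\text{Out}$ because its square $\iota_v\iota_w$ is already nontrivial on homology. Verifying this, together with the injectivity of the signed-permutation representation, is the crux; granting it, the clash with the first fact closes the proof.
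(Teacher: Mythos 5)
Your proposal is correct, and its skeleton matches the paper's: both argue by contradiction/contraposition, both use Lemma \ref{above} (together with the reduction, via Corollary \ref{straut}, from asymmetry of $\Gamma$ to $\text{Aut}(A_\Gamma)=\text{Aut}^0(A_\Gamma)$ and $\text{Out}(A_\Gamma)=\text{Out}^0(A_\Gamma)$, all blocks being of size $1$) to control torsion on the $\Gamma$ side, and both exploit the type (1) Whitehead subgroup $\mathbb{Z}_2^{|\text{V}(\Lambda)|}\rtimes\text{Aut}(\Lambda)$ on the $\Lambda$ side. Where you genuinely diverge is in the choice of distinguishing invariant: the paper observes that every \emph{finite subgroup} of $\text{Aut}(A_\Gamma)$ or $\text{Out}(A_\Gamma)$ is (elementary) abelian, and that a nontrivial $\sigma\in\text{Aut}(\Lambda)$ makes $\mathbb{Z}_2^{|\text{V}(\Lambda)|}\rtimes\text{Aut}(\Lambda)$ non-abelian, since $\bar\sigma\iota_v\bar\sigma^{-1}=\iota_{\sigma(v)}\neq\iota_v$ for some $v$ --- an immediate contradiction requiring no case analysis. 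You instead use \emph{orders of torsion elements}, which forces you to handle the case of an involutive $\sigma$ separately via the twist $\iota_v\bar\sigma$, whose square you correctly compute to be $\iota_v\iota_{\sigma(v)}$, yielding order $4$. Your route is thus slightly longer, but it buys two things: the invariant ``maximal order of a torsion element'' is even more robustly preserved than ``existence of a non-abelian finite subgroup'' (both work, but yours needs only a single element, not a subgroup), and, more substantively, you make explicit a point the paper glosses over --- why the obstruction on the $\Lambda$ side survives the passage to $\text{Out}(A_\Lambda)$. Your argument via injectivity of the signed-permutation representation (equivalently, via torsion-freeness of $\text{Inn}(A_\Lambda)$, so that finite-order classes detected on homology remain of the same order in $\text{Out}$) fills in exactly the step the paper compresses into the phrase ``this group is an obstruction.''
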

\begin{proof}
From Lemma \ref{above}, every finite subgroup $G$ of the (outer) automorphism group of $A_\Gamma$ are embedded into the group of upper triangular matrices. Hence each element in $G$ has order two and thus $G$ is abelian.
Now assume on the contrary that $\text{Aut}(\Lambda)$ is not trivial. Then the group of type (1) Whitehead automorphisms, which is isomorphic to $\mathbb{Z}_2^{|\text{V}(\Lambda)|}\rtimes\text{Aut}(\Lambda)$, is not abelian since $\text{Aut}(\Lambda)$ acts on $\mathbb{Z}_2^{|\text{V}(\Lambda)|}$ by basis permutations. This group is an obstruction to embed the (outer) automorphism group of $A_\Lambda$ into the (outer) automorphism group of $A_\Gamma$.
\end{proof}
In order to evaluate from below, the next lemma is useful.
\begin{lem}\label{below}
Let $F\subset\text{V}(\Gamma)/\sim$ (respectively, $A\subset\text{V}(\Gamma)/\sim$) be the set of elements which span null graphs with more than one vertex (respectively, complete graphs). Then there is an injection
$$
\prod_{[v]\in F}\text{Aut}(F_{|[v]|})\times\prod_{[v]\in A}\text{GL}(|[v]|,\mathbb{Z})\hookrightarrow\text{Aut}^0(A_\Gamma).
$$
\end{lem}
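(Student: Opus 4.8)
The plan is to construct the injection explicitly by \emph{extension by the identity}. For each class $[v]$, the full subgraph spanned by $[v]$ is either null or complete by Lemma \ref{wd}, so the subgroup $\langle[v]\rangle\leq A_\Gamma$ is the RAAG on that subgraph, i.e. a free group $F_{|[v]|}$ when $[v]\in F$ or a free abelian group $\mathbb{Z}^{|[v]|}$ when $[v]\in A$; here I use the standard fact that the subgroup of $A_\Gamma$ generated by the vertices of a full subgraph is the RAAG on that subgraph. Given a tuple $(\phi_{[v]})$ in the product on the left-hand side, I would send it to the endomorphism $\Phi$ of $A_\Gamma$ that acts as $\phi_{[v]}$ on the generators lying in $[v]$, for every class $[v]$ simultaneously, so that $\Phi|_{\langle[v]\rangle}=\phi_{[v]}$.

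The heart of the matter, and the step I expect to be the main obstacle, is verifying that $\Phi$ is well defined; this is where the domination structure of $\Gamma$ enters. It suffices to check that $\Phi$ respects every defining relation $xy=yx$ with $\{x,y\}\in\text{E}(\Gamma)$. If $x$ and $y$ lie in the same class, that class spans an edge and is therefore complete by Lemma \ref{wd}, so $\langle[x]\rangle$ is abelian and the two images $\Phi(x),\Phi(y)$, being words in the same commuting generators, commute automatically. If instead $x\in[v]$ and $y\in[w]$ with $[v]\neq[w]$, then Lemma \ref{wd2} shows that the single edge $\{x,y\}$ forces every $[v]$-generator to commute with every $[w]$-generator; since $\Phi(x)$ is a word in $[v]$-generators and $\Phi(y)$ a word in $[w]$-generators, they commute as well. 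Running the same argument for the tuple $(\phi_{[v]}^{-1})$ produces a two-sided inverse endomorphism, so each $\Phi$ is in fact an automorphism of $A_\Gamma$.

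It then remains to check three routine points. First, $\Phi\in\text{Aut}^0(A_\Gamma)$: since $\text{Aut}(F_{|[v]|})$ and $\text{GL}(|[v]|,\mathbb{Z})$ are generated by transvections and inversions (the permutations of generators being products of these), it is enough that each such generator of $\phi_{[v]}$ extends to a pure automorphism of $A_\Gamma$; a within-class transvection $v_j\mapsto v_iv_j$ is an admissible Laurence--Servatius transvection because $v_i\sim v_j$ gives $v_i\geq v_j$, the right transvection $\rho_{v_i,v_j}$ is a product of a transvection and a partial conjugation, and an inversion extends to $\iota_{v_i}$. Second, the assignment is a homomorphism on the direct product: an automorphism built from $[v]$ fixes the generators of every other class $[w]$ and sends $[v]$-generators to $[v]$-words, so the factors coming from distinct classes commute. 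Finally, injectivity follows from the canonical retraction $A_\Gamma\twoheadrightarrow\langle[v]\rangle$ that kills all generators outside $[v]$: composing $\Phi$ with it recovers $\phi_{[v]}=\Phi|_{\langle[v]\rangle}$, so $\Phi=\text{id}$ forces every $\phi_{[v]}=\text{id}$.
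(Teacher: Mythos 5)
Your proposal is correct and follows essentially the same route as the paper, which simply asserts that automorphisms of the subgroups spanned by equivalence classes extend (by the identity on the remaining generators) to automorphisms of $A_\Gamma$; you have merely supplied the verifications the paper leaves implicit (well-definedness via Lemmas \ref{wd} and \ref{wd2}, purity via the Laurence--Servatius generators, and injectivity). No gaps.
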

\begin{proof}
Given automorphisms of subgroups whose underlying subgraphs are spanned by equivalence classes, they can be extended to a group homomorphism defined on the the whole group $A_\Gamma$.
\end{proof}
\begin{theo}\label{finsub}
\begin{align*}
\nu_p(\text{Aut}^0(A_\Gamma))&=\nu_p(\text{Out}^0(A_\Gamma))=\sum_{[v]\in\text{V}(\Gamma)/\sim,k\geq0}\left\lfloor{\frac{|[v]|}{p^k(p-1)}}\right\rfloor\\
\text{rank}_{\mathbb{Z}_p}(\text{Aut}^0(A_\Gamma))&=\text{rank}_{\mathbb{Z}_p}(\text{Out}^0(A_\Gamma))=\sum_{[v]\in\text{V}(\Gamma)/\sim}\left\lfloor{\frac{|[v]|}{p-1}}\right\rfloor
\end{align*}
\end{theo}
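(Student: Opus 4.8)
The plan is to pin the four quantities between a lower and an upper bound that turn out to coincide. The upper bounds come from the chain of torsion-free-kernel homomorphisms in Lemma \ref{above}, and the lower bounds from the injection in Lemma \ref{below}; both ends will be evaluated by the explicit formulas for $\text{Aut}(F_n)$ and $\text{GL}(n,\mathbb{Z})$ recorded in Lemma \ref{autoutp}. Since $\Gamma$ is finite, all the direct products involved are finite, so the additivity clauses of Lemma \ref{basictools} apply by iteration.

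First I would handle the upper bound. A composite of maps with torsion-free kernels again has torsion-free kernel, so Lemma \ref{above} yields homomorphisms $\text{Aut}^0(A_\Gamma)\to\prod_{[v]}\text{GL}(|[v]|,\mathbb{Z})$ and $\text{Out}^0(A_\Gamma)\to\prod_{[v]}\text{GL}(|[v]|,\mathbb{Z})$ with torsion-free kernels, together with the torsion-free-kernel map $\text{Aut}^0(A_\Gamma)\to\text{Out}^0(A_\Gamma)$. Feeding these into the monotonicity and the direct-product additivity of Lemma \ref{basictools} gives
\begin{align*}
\nu_p(\text{Aut}^0(A_\Gamma))\leq\nu_p(\text{Out}^0(A_\Gamma))&\leq\sum_{[v]}\nu_p(\text{GL}(|[v]|,\mathbb{Z})),\\
\text{rank}_{\mathbb{Z}_p}(\text{Aut}^0(A_\Gamma))\leq\text{rank}_{\mathbb{Z}_p}(\text{Out}^0(A_\Gamma))&\leq\sum_{[v]}\text{rank}_{\mathbb{Z}_p}(\text{GL}(|[v]|,\mathbb{Z})),
\end{align*}
and Lemma \ref{autoutp} rewrites the right-hand sides as the two sums appearing in the theorem.

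Next I would produce the matching lower bound from the injection of Lemma \ref{below}, again via monotonicity and additivity, obtaining
\begin{align*}
\nu_p(\text{Aut}^0(A_\Gamma))&\geq\sum_{[v]\in F}\nu_p(\text{Aut}(F_{|[v]|}))+\sum_{[v]\in A}\nu_p(\text{GL}(|[v]|,\mathbb{Z}))
\end{align*}
together with its $\text{rank}_{\mathbb{Z}_p}$ counterpart. Here the decisive bookkeeping is that $F$ and $A$ partition $\text{V}(\Gamma)/\sim$: by Lemma \ref{wd} every class spans a complete or a null graph, with singleton classes counted as complete, so $F\sqcup A=\text{V}(\Gamma)/\sim$. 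The point that makes the two bounds agree is that Lemma \ref{autoutp} gives $\nu_p(\text{Aut}(F_n))=\nu_p(\text{GL}(n,\mathbb{Z}))$ and $\text{rank}_{\mathbb{Z}_p}(\text{Aut}(F_n))=\text{rank}_{\mathbb{Z}_p}(\text{GL}(n,\mathbb{Z}))$; hence each class contributes $\sum_{k\geq0}\lfloor|[v]|/(p^k(p-1))\rfloor$ to the valuation and $\lfloor|[v]|/(p-1)\rfloor$ to the rank, whether its type is free or abelian, and the lower bound collapses to precisely the upper-bound sums.

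Chaining the inequalities then forces $\nu_p(\text{Aut}^0(A_\Gamma))=\nu_p(\text{Out}^0(A_\Gamma))$ and likewise for the ranks, each equal to the stated expression. The argument is essentially a formal sandwich once the inputs are assembled; the one place I would be careful is exactly this matching step — verifying that $F\sqcup A$ exhausts the classes and that the free-type and abelian-type per-class contributions coincide — since that is what allows a single formula to cover both kinds of equivalence class.
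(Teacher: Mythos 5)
Your proposal is correct and is exactly the argument the paper intends: the paper's proof of Theorem \ref{finsub} is the one-line ``immediate from Lemmas \ref{basictools}, \ref{above} and \ref{below}'', and your write-up simply makes explicit the same sandwich (upper bound from the torsion-free-kernel chain of Lemma \ref{above}, lower bound from the injection of Lemma \ref{below}, matched via Lemma \ref{autoutp} and the partition $F\sqcup A=\text{V}(\Gamma)/\sim$). No gaps; the details you flag as needing care (composites of torsion-free-kernel maps, singleton classes counted as complete) are handled correctly.
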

\begin{proof}
This is immediate from Lemmas \ref{basictools}, \ref{above} and \ref{below}.
\end{proof}
\begin{cor}\label{finsubcor}
\begin{align*}
\sum_{[v],k}\left\lfloor{\frac{|[v]|}{p^k(p-1)}}\right\rfloor&\leq\nu_p(\text{Aut}(A_\Gamma))\leq\min\left\{\sum_{[v],k}\left\lfloor{\frac{|[v]|}{p^k(p-1)}}\right\rfloor+\nu_p\left(\text{Aut}^{\text{lab}}(\bar{\Gamma})\right),\sum_{k\geq0}\left\lfloor{\frac{|\text{V}(\Gamma)|}{p^k(p-1)}}\right\rfloor\right\}\\
\sum_{[v],k}\left\lfloor{\frac{|[v]|}{p^k(p-1)}}\right\rfloor&\leq\nu_p(\text{Out}(A_\Gamma))\leq\min\left\{\sum_{[v],k}\left\lfloor{\frac{|[v]|}{p^k(p-1)}}\right\rfloor+\nu_p\left(\text{Aut}^{\text{lab}}(\bar{\Gamma})\right),\sum_{k\geq0}\left\lfloor{\frac{|\text{V}(\Gamma)|}{p^k(p-1)}}\right\rfloor\right\}\\
\sum_{[v]}\left\lfloor{\frac{|[v]|}{p-1}}\right\rfloor&\leq\text{rank}_{\mathbb{Z}_p}(\text{Aut}(A_\Gamma))\leq\min\left\{\sum_{[v]}\left\lfloor{\frac{|[v]|}{p-1}}\right\rfloor+\text{rank}_{\mathbb{Z}_p}\left(\text{Aut}^{\text{lab}}(\bar{\Gamma})\right),\left\lfloor{\frac{|\text{V}(\Gamma)|}{p-1}}\right\rfloor\right\}\\
\sum_{[v]}\left\lfloor{\frac{|[v]|}{p-1}}\right\rfloor&\leq\text{rank}_{\mathbb{Z}_p}(\text{Out}(A_\Gamma))\leq\min\left\{\sum_{[v]}\left\lfloor{\frac{|[v]|}{p-1}}\right\rfloor+\text{rank}_{\mathbb{Z}_p}\left(\text{Aut}^{\text{lab}}(\bar{\Gamma})\right),\left\lfloor{\frac{|\text{V}(\Gamma)|}{p-1}}\right\rfloor\right\}
\end{align*}
\end{cor}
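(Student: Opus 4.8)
The plan is to combine the semidirect product decompositions from Corollary \ref{straut} with the exact values computed in Theorem \ref{finsub} and the monotonicity and subadditivity properties collected in Lemma \ref{basictools}. All eight inequalities follow the same pattern, so I will describe the argument for $\nu_p(\text{Out}(A_\Gamma))$; the remaining three cases are obtained verbatim after replacing $\nu_p$ by $\text{rank}_{\mathbb{Z}_p}$ and/or $\text{Out}$ by $\text{Aut}$.

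For the lower bound, I would feed the decomposition $\text{Out}(A_\Gamma)=\text{Out}^0(A_\Gamma)\rtimes\text{Aut}^{\text{lab}}(\bar{\Gamma})$ of Corollary \ref{straut} into the inequality $\nu_p(G\rtimes H)\geq\max\{\nu_p(G),\nu_p(H)\}\geq\nu_p(G)$ from Lemma \ref{basictools}, taking $G=\text{Out}^0(A_\Gamma)$. The exact value $\nu_p(\text{Out}^0(A_\Gamma))=\sum_{[v],k}\lfloor|[v]|/(p^k(p-1))\rfloor$ supplied by Theorem \ref{finsub} then gives the claimed lower bound immediately.

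The upper bound is a minimum of two quantities, so I would bound $\nu_p(\text{Out}(A_\Gamma))$ above in two independent ways and take the smaller. The first bound reuses the same decomposition together with the subadditivity $\nu_p(G\rtimes H)\leq\nu_p(G)+\nu_p(H)$ from Lemma \ref{basictools} and the value from Theorem \ref{finsub}, yielding $\sum_{[v],k}\lfloor|[v]|/(p^k(p-1))\rfloor+\nu_p(\text{Aut}^{\text{lab}}(\bar{\Gamma}))$. The second bound comes from the abelianization map $\text{Out}(A_\Gamma)\to\text{GL}(|\text{V}(\Gamma)|,\mathbb{Z})$, whose kernel is torsion-free by Toinet's theorem (already invoked in the proof of Lemma \ref{above}); the final clause of Lemma \ref{basictools} then gives $\nu_p(\text{Out}(A_\Gamma))\leq\nu_p(\text{GL}(|\text{V}(\Gamma)|,\mathbb{Z}))$, and Lemma \ref{autoutp} identifies the right-hand side as $\sum_{k\geq0}\lfloor|\text{V}(\Gamma)|/(p^k(p-1))\rfloor$. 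For the $\text{Aut}$ versions the only extra point is that the abelianization map factors as $\text{Aut}(A_\Gamma)\to\text{Out}(A_\Gamma)\to\text{GL}(|\text{V}(\Gamma)|,\mathbb{Z})$, whose first arrow has kernel $\text{Inn}(A_\Gamma)$, a RAAG and hence torsion-free (Lemma \ref{above}, first map), so the composite still has torsion-free kernel and the same monotonicity argument applies.

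There is no genuine obstacle here: every step is a direct application of an already-established lemma, and the entire content of the corollary resides in Theorem \ref{finsub} and the structural results it rests on. This corollary is merely the bookkeeping that transfers those computations from the pure groups to the full (outer) automorphism groups. The one mild subtlety worth flagging is that the semidirect product inequalities of Lemma \ref{basictools} are not sharp in general, which is exactly why the upper bound must be recorded as a minimum of two separately derived expressions rather than as a single closed form.
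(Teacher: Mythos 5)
Your proposal is correct and follows essentially the same route as the paper: the lower bound and the first term of the minimum come from Corollary \ref{straut}, Theorem \ref{finsub}, and the semidirect-product inequalities of Lemma \ref{basictools}, while the second term of the minimum comes from the torsion-free kernels of $\text{Aut}(A_\Gamma)\rightarrow\text{Out}(A_\Gamma)\rightarrow\text{GL}(|\text{V}(\Gamma)|,\mathbb{Z})$ together with Lemma \ref{autoutp}. The extra detail you supply (e.g., applying the monotonicity statement of Lemma \ref{basictools} through the two-step factorization for the $\text{Aut}$ cases) is exactly what the paper leaves implicit.
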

\begin{proof}
The inequalities of left hand sides and the first terms of $\min$ in the right hand sides are clear from Theorem \ref{finsub} and Corollary \ref{straut} and Lemma \ref{basictools}. As for the second term, recall that kernels of maps below are torsion-free:
\[
\text{Aut}(A_\Gamma)\rightarrow\text{Out}(A_\Gamma)\rightarrow\text{GL}(|\text{V}(\Gamma)|,\mathbb{Z}),
\]
as also stated in the proof of Theorem \ref{above}. Combining this with Lemma \ref{basictools} and Lemma \ref{autoutp} completes the proof. 
\end{proof}
Note that the result of Kielak (Theorem \ref{Kielak}) is immediate if we apply the fourth formula of Corollary \ref{finsubcor} to the case $p=2$.
\begin{cor}
Let $\nu_p(\text{Aut}(\Gamma))<\nu_p(\text{Aut}(\Lambda))$ and $|[v]|<p-1$ for all $[v]\in\text{V}(\Gamma)/\sim$. Then there are no embeddings from the (outer) automorphism group of $A_\Lambda$ to the (outer) automorphism group of $A_\Gamma$.
\end{cor}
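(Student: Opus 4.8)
The plan is to run a $p$-adic valuation obstruction, using the monotonicity of $\nu_p$ under injections (the last bullet of Lemma \ref{basictools}) together with the two-sided bounds of Corollary \ref{finsubcor}. Suppose for contradiction that $\text{Aut}(A_\Lambda)$ embeds into $\text{Aut}(A_\Gamma)$; then $\nu_p(\text{Aut}(A_\Lambda))\leq\nu_p(\text{Aut}(A_\Gamma))$, and it suffices to sandwich these two quantities so as to contradict the hypothesis $\nu_p(\text{Aut}(\Gamma))<\nu_p(\text{Aut}(\Lambda))$.

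First I would bound $\nu_p(\text{Aut}(A_\Gamma))$ from above. The assumption $|[v]|<p-1$ for every $[v]\in\text{V}(\Gamma)/\sim$ forces $\lfloor|[v]|/(p^k(p-1))\rfloor=0$ for all $k\geq0$ and all $[v]$, since $|[v]|/(p^k(p-1))<1/p^k\leq1$. Hence the first entry of the minimum in the upper bound of Corollary \ref{finsubcor} collapses to $\nu_p(\text{Aut}^{\text{lab}}(\bar\Gamma))$. Since $\text{Aut}^{\text{lab}}(\bar\Gamma)$ sits as a subgroup of $\text{Aut}(\Gamma)$ — it is the complement $Q$ in the semidirect decomposition $\text{Aut}(\Gamma)=(\prod_{[v]}S_{|[v]|})\rtimes\text{Aut}^{\text{lab}}(\bar\Gamma)$ — monotonicity of $\nu_p$ gives $\nu_p(\text{Aut}^{\text{lab}}(\bar\Gamma))\leq\nu_p(\text{Aut}(\Gamma))$, so $\nu_p(\text{Aut}(A_\Gamma))\leq\nu_p(\text{Aut}(\Gamma))$. (In fact equality holds for $\text{Aut}(\Gamma)$ itself, because $|[v]|<p-1<p$ makes each $\nu_p(S_{|[v]|})=\nu_p(|[v]|!)=0$, whence $\nu_p(\prod_{[v]}S_{|[v]|})=0$ and the upper bound for the semidirect product collapses; but only the displayed inequality is needed.)

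Next I would bound $\nu_p(\text{Aut}(A_\Lambda))$ from below. Graph symmetries embed $\text{Aut}(\Lambda)$ into $\text{Aut}(A_\Lambda)$ as a subgroup of the type (1) Whitehead automorphisms, giving $\nu_p(\text{Aut}(\Lambda))\leq\nu_p(\text{Aut}(A_\Lambda))$. Chaining the three inequalities yields
\[
\nu_p(\text{Aut}(\Lambda))\leq\nu_p(\text{Aut}(A_\Lambda))\leq\nu_p(\text{Aut}(A_\Gamma))\leq\nu_p(\text{Aut}(\Gamma)),
\]
contradicting the hypothesis and settling the $\text{Aut}$ case.

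For the outer case the upper bound is verbatim the same, since Corollary \ref{finsubcor} provides the identical first entry for $\nu_p(\text{Out}(A_\Gamma))$. The only point that requires care — and the place I expect the main obstacle — is the lower bound $\nu_p(\text{Aut}(\Lambda))\leq\nu_p(\text{Out}(A_\Lambda))$, i.e. verifying that distinct graph symmetries remain distinct in $\text{Out}(A_\Lambda)$. I would argue this by composing with the homology representation $\text{Out}(A_\Lambda)\rightarrow\text{GL}(|\text{V}(\Lambda)|,\mathbb{Z})$: a nontrivial $\sigma\in\text{Aut}(\Lambda)$ permutes vertices nontrivially and hence acts as a nontrivial permutation matrix on $\text{H}_1(A_\Lambda)$, so the composite $\text{Aut}(\Lambda)\rightarrow\text{GL}(|\text{V}(\Lambda)|,\mathbb{Z})$ is faithful and $\text{Aut}(\Lambda)\hookrightarrow\text{Out}(A_\Lambda)$ is injective. (Alternatively, one may invoke the torsion-freeness of the kernel of $\text{Out}(A_\Lambda)\rightarrow\text{GL}(|\text{V}(\Lambda)|,\mathbb{Z})$ from Lemma \ref{above}.) With this, the same chain of inequalities closes the outer case.
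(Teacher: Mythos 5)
Your argument is correct and is essentially the paper's own proof in repackaged form: the paper takes a Sylow $p$-subgroup of $\text{Aut}(\Lambda)$, pushes it through graph symmetries and the assumed embedding, and derives a contradiction from the fact that its intersection with $\text{Aut}^0(A_\Gamma)$ (respectively $\text{Out}^0(A_\Gamma)$) must be nontrivial while Theorem \ref{finsub} forces $\nu_p(\text{Aut}^0(A_\Gamma))=0$ when $|[v]|<p-1$ --- which is exactly the content of your chain $\nu_p(\text{Aut}(\Lambda))\leq\nu_p(\text{Aut}(A_\Lambda))\leq\nu_p(\text{Aut}(A_\Gamma))\leq\nu_p(\text{Aut}^{\text{lab}}(\bar{\Gamma}))\leq\nu_p(\text{Aut}(\Gamma))$ obtained from Corollary \ref{finsubcor}. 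Your extra care about the injectivity of $\text{Aut}(\Lambda)\rightarrow\text{Out}(A_\Lambda)$ via the homology representation is a point the paper leaves implicit, but the approach is the same.
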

\begin{proof}
Assume on the contrary that a $p$-group of order $p^{\nu_p(\text{Aut}(\Lambda))}$ are embedded into the (outer) automorphism group of $A_\Gamma$. Then the intersection of its image and the pure (outer) automorphism group of $A_\Gamma$ has order $p^t$, where $t\geq1$. Now applying Corollary \ref{finsub} completes the proof.
\end{proof}

\end{document}